\documentclass[12pts]{article}

\RequirePackage[OT1]{fontenc}
\RequirePackage[amsthm,amsmath,natbib,noinfoline]{imsart}

\usepackage{latexsym}
\usepackage{amsfonts}
\usepackage{amssymb}
\usepackage{fontenc}
\usepackage{textcomp}
\usepackage{graphics}
\usepackage{color}
\usepackage{colortbl}
\usepackage{graphicx}

\startlocaldefs

    \theoremstyle{plain}
    \numberwithin{equation}{section}
    \newtheorem{thrm}{Theorem}[section]             
    \newtheorem{prop}[thrm]{Proposition}
    
    \newtheorem{cllry}[thrm]{Corollary}
    \newtheorem{lmma}[thrm]{Lemma}
    \newtheorem{remk}[thrm]{Remark}
    
    \newtheorem{exmpl}[thrm]{Example}

    \def\calB{{\mathcal B}}

    \def\calI{{\mathcal I}}

    \def\calR{{\mathcal R}}

    \def\bbr{{\mathbb R}}
    \def\bbe{{\,\mathbb E\,}}
    \def\bbp{{\mathbb P}}

    \def\ed{{\,\stackrel{\frak {D}}{=}\,}}

    \definecolor{Red}{rgb}{0.00, 0.00, 0.00}
    \newcommand{\Red}{\color{Red}}
    \definecolor{DRed}{rgb}{0.0, 0.00, 0.00}
    \newcommand{\DRed}{\color{DRed}}
    \definecolor{Blue}{rgb}{0.00, 0.00, 1.00}
    
    \definecolor{PaleGrey}{rgb}{.6, .6, .6}

\endlocaldefs

\begin{document}
\begin{frontmatter}

\title{Small-time expansions for the transition
distributions of L\'evy processes}
\runtitle{Small-time expansions for L\'evy processes}

\begin{aug}
\author{\fnms{Jos\'e} E. \snm{Figueroa-L\'opez}
}
\and
\author{\fnms{Christian} \snm{Houdr\'e}\thanksref{t2}
}


\thankstext{t2}{
It is  a pleasure to thank Philippe Marchal for helpful comments and for suggesting  us an interesting counterexample.}

\runauthor{J.E. Figueroa-L\'opez and C. Houdr\'e}


\address{Department of Statistics\\
Purdue University\\
W. Lafayette, IN 47906, USA\\
\printead[figueroa@stat.purdue.edu]{e1}
}

\address{Department of Mathematics\\
Georgia Institute of Technology\\
Atlanta, GA, 30332, USA\\
\printead[houdre@math.gatech.edu]{e2}
}
\end{aug}

\begin{abstract}
    Let $X = (X_t)_{t\ge 0}$ be a 
L\'evy process with absolutely continuous L\'evy measure $\nu$.   Small time polynomial expansions of order $n$ in $t$  are obtained for the tails 
$\bbp\left( X_{t}\geq{}y\right)$ of the process, assuming 
smoothness conditions on the L\'evy density away from the origin. 
By imposing additional regularity conditions on the transition density $p_{t}$ of $X_{t}$, 
an explicit expression for the remainder of the approximation is also given. As a byproduct, polynomial expansions of order $n$ in $t$ are derived for the transition densities of the process. 
The conditions imposed on $p_{t}$
require that its derivatives 
remain uniformly bounded away from the origin, as $t\rightarrow{}0$; 
such conditions are shown to be satisfied for symmetric stable L\'evy processes
as well as for other related L\'evy processes of relevance in mathematical finance. 
The expansions seem to correct asymptotics previously reported in the literature.
\end{abstract}



\begin{keyword}[class=AMS]
\kwd{60G51, 60F99}
\end{keyword}

\begin{keyword}
\kwd{L\'evy processes}
\kwd{small-time expansions}
\kwd{rate of convergence}
\kwd{transition distributions}
\kwd{transition densities}
\end{keyword}

\end{frontmatter}

\section{Introduction}
L\'evy processes are important building blocks in stochastic models whose evolution in time might 
exhibit sudden changes in value. Such models can be constructed in rather general ways, such as 
stochastic differential equations driven by L\'evy processes or time-changes 
of L\'evy processes.  Many of these models have been suggested and heavily studied in the 
area of mathematical finance 
(see \cite{Cont:2003} for an introduction to some of these applications). 

{A L\'evy process} $X=(X_t)_{t\ge 0}$ is typically described in terms of 
a triplet $(\sigma^{2},b,\nu)$ such that the process can be understood as the superposition of a 
Brownian motion with drift, say $\sigma W_{t}+bt$, and a pure-jump component, whose discontinuities 
are determined by $\nu$ in that, the average intensity (per unit time) of jumps whose size fall in a 
given set of values $A$ is $\nu(A)$.  
Thus, for instance, if $\nu((-\infty,0])=0$, then $X$ will exhibit only positive jumps.  
A common assumption in many applications is that
$\nu$ is determined by a function
$s:\bbr\backslash\{0\}\rightarrow [0,\infty)$, called
the \emph{L\'evy density}, in the sense that
\[
    \nu(A):=\int_{A} s(x) dx, \;\; \forall
    A\in\calB(\bbr\backslash\{0\}).
\]
Intuitively, the value of $s$ at $x_{0}$ provides
information on the frequency of jumps with sizes
``close'' to $x_{0}$.

Still, L\'evy models have some important shortcomings for certain applications.  For instance, given that 
typically the law of $X_{t}$ is specified via its characteristic function 
\[
	\varphi_{t}(u):=\bbe e^{iu X_{t}},
\]
neither its density function $p_{t}$ nor its distribution 
function $\bbp\left(X_{t}\leq{}y\right)$ are explicitly given in many cases.  
Therefore, {the computation of} such quantities necessitates numerical or analytical 
approximation methods. In this paper we study, short time, analytical approximations for the tail distributions 
 $\bbp\left(X_{t}\geq y\right)$. 
This type of asymptotic results plays an important role in
the non-parametric estimation of the L\'evy measure
based on high-frequency sampling observations
of the process  as carefully reported in \cite{Fig:2007a} {(see also \cite{Rubin:1959},
\cite{Figueroa:2004}, and
\cite{Woerner:2003}). In Section \ref{Motivation}, we present some of the ideas behind this important application of our results.}

It is a well-known fact that the first order approximation 
is given by $t\nu([y,\infty))$, in the sense that 
\begin{equation}\label{FrstLimit}
    \lim_{t\rightarrow{}0}\frac{1}{t}\;
    \bbp \left(X_{t}\geq y\right)
    =\nu([y,\infty)),
\end{equation}
{provided that $y$ is a point of continuity of $\nu$}
(see, e.g., Chapter 1 of Bertoin \cite{Bertoin}).  
A natural question is then to determine 
the rate of convergence in (\ref{FrstLimit}).  
In case of a compound Poisson process, this rate is $O(t)$, and it is then natural 
to ask whether or not the limit below exists {for general L\'evy processes:}
\begin{equation}\label{TargetLimit}
\lim_{t\rightarrow{}0}
    \frac{1}{t}\left\{\frac{1}{t}\bbp\left( X_{t}\geq{}y\right)
    -\nu([y,\infty))\right\}.
\end{equation}
In this paper, we study the validity of the more general {polynomial} expansion: 
    \begin{equation}\label{GnrlExpansion0}
        \bbp\left(X_{t}\geq{}y\right)=
        \sum_{k=1}^{n} d_{k}\,\frac{t^{k}}{k!}+
        \frac{t^{n+1}}{n!}\mathcal{R}_{n}(t),
    \end{equation}
for certain constants $d_{k}$ and a remainder term $\mathcal{R}_{n}(t)$ 
bounded for $t$ small enough. 
{
Note that in terms of the coefficients of (\ref{GnrlExpansion0}), the limit (\ref{TargetLimit}) 
is given by 
\begin{equation}\label{TargetLimit1}
\frac{d_{2}}{2}=\lim_{t\rightarrow{}0}
    \frac{1}{t}\left\{\frac{1}{t}\bbp\left( X_{t}\geq{}y\right)
    -\nu([y,\infty))\right\}.
\end{equation}
}

{
{For a compound Poisson process},  
the expansion (\ref{GnrlExpansion0}) results easily from conditioning on the number of jumps on $[0,t]$.
Thus, infinite-jump activity processes are the interesting cases.}
Ruschendorf and Woerner
\cite{Ruschendorf} (see Theorem 2 in Section 3)
report that for a fixed $N\geq{}1$
and $\eta>0$,
there exists a $\varepsilon'(N)>0$ and $t_{0}>0$ such that,
for all $\varepsilon\in(0,\varepsilon'(N))$ and
$t\in(0,t_{0})$,
\begin{equation}\label{WoernerClaim}
    \bbp(X_{t}\geq {}y)=
    \sum_{i=1}^{N-1} \frac{t^{i}}{i!}\, \nu_{\varepsilon}^{*i}(
    [y,\infty))
    +O_{\varepsilon,\eta}(t^{N}),
    \quad{}for\; y>\eta,
\end{equation}
where
$\nu_{\varepsilon}(dx)={\bf 1}_{\{|x|\geq{}\varepsilon\}}\nu(dx)$.
When $N=3$,
this result would imply that, for $0<\varepsilon<y/2\wedge
\varepsilon'(N)$,
\[
    \bbp(X_{t}\geq{}y)=
    t\,
\nu([y,\infty))
+\frac{t^{2}}{2}
\int_{|u|\geq{}\varepsilon}
\int_{|v|\geq{}\varepsilon}
{\bf 1}_{\{u+v\geq y\}}\nu(dv)\nu(du)
+O_{\varepsilon,\eta}(t^{3}).
\]
{Thus,}  (\ref{WoernerClaim}) would
imply that
\begin{equation*}\label{TargetLimitBad}
\lim_{t\rightarrow{}0}
    \frac{1}{t}\left\{\frac{1}{t}\bbp\left( X_{t}\geq{}y\right)
    -\nu([y,\infty))\right\}=
    \frac{1}{2}
\int_{|u|\geq{}\varepsilon}
\int_{|v|\geq{}\varepsilon}
{\bf 1}_{\{u+v\geq y\}}\nu(dv)\nu(du),
\end{equation*}
which is independent of the Brownian {component $\sigma W_{t}$ and of the ``drift" $bt$}.
We actually found that this limiting value is not the correct
one and provide below the correction using
two different approaches.
Let us point out
where we believe the arguments of \cite{Ruschendorf} are lacking.
The main problem arises from the application of
their Lemma 3 in Theorem 2 (see also Lemma 1 in Theorem 1).
In those lemmas, the value of $t_{0}$ actually depends
on $\delta$. Later on in the proofs,
$\delta$ is taken arbitrarily small,
which is likely to result in $t_{0}\rightarrow{}0$ (unless otherwise
proved).

We prove (\ref{GnrlExpansion0}) using two approaches. 
The first approach is similar in spirit to that in \cite{Ruschendorf}.  
It consists in decomposing the L\'evy process $X$ into two processes, 
one compound Poisson process $\widetilde{X}^{\varepsilon}$ collecting  the ``big'' jumps and another process $X_{t}^{\varepsilon}$ accounting  for the ``small'' jumps.  By conditioning on the number of big jumps during the time interval $[0,t]$, it yields an expression of the form 
\begin{align*}
    \bbp\left(X_{t}\geq{}y\right)
    =
    e^{-\lambda_{\varepsilon}t}\sum_{k=1}^{n}\frac{(\lambda_{\varepsilon}t)^{k}}{k!}
    \bbp\left(X^{\varepsilon}_{t}+\sum_{i=1}^{k}\xi_{i}\geq y \right)+ 
    O(t^{n+1}).
\end{align*}
By taking a compound Poisson process $\widetilde{X}^{\varepsilon}$ with jumps $\{\xi_{i}\}_{i\geq 1}$ having a smooth density, 
one can expand further each term on the right-hand side using the following power series expansion:
\begin{equation}\label{MomentExpansionSmooth}
    \bbe g(X_{t})=g(0)
    +\sum_{k=1}^{n} \frac{t^{k}}{k!}
    L^{k}g(0)+
    \frac{t^{n+1}}{n!}
    \int_{0}^{1} (1-\alpha)^{n}\bbe\left\{
    L^{n+1} g(X_{\alpha t})\right\}d\alpha,
\end{equation}
valid for any $n\geq{}0$ and $g\in C_{b}^{2n+2}$, the
class of functions having continuous and bounded derivatives
of order $0\leq k\leq 2n+2$.  
Above, $L$ is the infinitesimal generator of the L\'evy process, i.e.,  
\begin{equation}\label{InfGen}
    (Lg)(x):=
    \frac{\sigma^{2}}{2}g''(x)+b
    g'(x)+\int
    \left(g(u+x)-g(x)-ug'(x){\bf 1}_{\{|u|\leq{}1\}}\right)
    \nu(du),
\end{equation}
for any function $g\in C_{b}^{2}$.  

{\DRed 
For $n=0$, (\ref{MomentExpansionSmooth}) takes a familiar form
(see e.g. Lemma 19.21 in \cite{Kallenberg}):
\begin{equation}\label{Dynkin}
	\bbe g(X_{t}) =g(0)+t\int_{0}^{1} \bbe\{Lg(X_{\alpha t})\}d\alpha=
	g(0)+\int_{0}^{t} \bbe\{Lg(X_{u})\}d u,
\end{equation}
which is an easy consequence of It\^o's formula. The general case follows easily by induction in $n$. Indeed, if (\ref{MomentExpansionSmooth}) is valid for $n$, applying (\ref{Dynkin}),
$\int_{0}^{1} (1-\alpha)^{n}\bbe\left\{L^{n+1} g(X_{\alpha t})\right\}d\alpha$ becomes
\begin{align*}	
    &\int_{0}^{1} (1-\alpha)^{n}\left\{L^{n+1}g(0)+\alpha t\int_{0}^{1}\bbe\left\{
    L^{n+2} g(X_{\alpha' \alpha t})\right\}d\alpha'\right\}d\alpha\\
    &=\frac{1}{n+1}L^{n+1}g(0)+\frac{t}{n+1} \int_{0}^{1} (1-\hat\alpha)^{n+1}\bbe\left\{
    L^{n+1} g(X_{\hat\alpha t})\right\}d\hat\alpha,
\end{align*}
where we changed variables $\hat\alpha:=\alpha \alpha'$ and applied Fubini' s Theorem. 
Another proof of (\ref{MomentExpansionSmooth}) is given in \cite{Houdre:1998} based on Fourier approximations of $g$ (Proposition 1 and 4 in there). 
}

In the second order approximation case ($n=2$), we give another proof {\DRed for (\ref{GnrlExpansion0})} which relaxes the assumptions on the L\'evy density $s$, by requiring only smoothness in a neighborhood of $y$ and local boundedness away from the origin. This approach is based on the following recent asymptotic result by Jacod \cite{Jacod05}:
\begin{equation}\label{LimitWithsigma}
    \lim_{t\rightarrow{}0}\frac{1}{t}\; \bbe g(X_{t})
    =\sigma^{2}+\int g(x) \nu(dx),
\end{equation}
valid for a $\nu$-continuous bounded function $g$ such that 
$g(x)\sim x^{2}$, as $x\rightarrow{}0$.  In case the process is of finite variation and has no diffusion term, we {\DRed prove} the second order expansion as long as $s$ is continuous at $y$ {\DRed  and locally bounded away from $0$}. We also present a counterexample, originally suggested by Philippe Marchal, which shows that the result is not valid {\DRed if} $s$ is not continuous {\Red(see \cite{Marchal} for further developments)}.

In order to provide explicit formulas for the coefficients $d_{k}$ in (\ref{GnrlExpansion0}), in Section \ref{ApprxmtSect}, we consider a second approach 
whose basic first step is to approximate the indicator 
function ${\bf 1}_{[y,\infty)}$ by smooth functions $f_{m}$ in such a way that 
\[
	\lim_{m\rightarrow\infty}\bbe f_{m}(X_{t})=\bbp(X_{t}\geq{}y).
\] 
The idea to derive (\ref{GnrlExpansion0}) is to apply (\ref{MomentExpansionSmooth}) to each smooth 
approximation $f_{m}$ and show that the limit of each term in the power expansion converges as $m\rightarrow\infty$. We emphasize that this approach is carried out without additional assumption on $s$, except smoothness and local boundedness away from the origin.
In Section \ref{DnstyExp},
we exploit further the approximation of $f$ by the smooth functions $f_{m}$ to provide an explicit formula for the remainder $\calR_{n}(t)$ in (\ref{GnrlExpansion0}).
To carry out this plan, {we impose more stringent conditions on $X$} than those required in the first approach.  
In particular, we require that $X_{t}$ has a $C^{\infty}$-transition density $p_{t}$, whose derivatives 
remain uniformly bounded away from the origin, as $t\rightarrow{}0$.  
As a byproduct of the explicit remainder, polynomial expansions of order $n$ in $t$ are derived for the transition densities of the process extending a result in \cite{Ruschendorf}.

{
In Section \ref{SectSymmtLevy}, the boundedness conditions on the derivatives of the transition densities are shown to hold for 
symmetric stable L\'evy processes. The validity of this uniform boundedness for general \emph{tempered stable processes} is also considered in Section \ref{SectGnrlSymmtLevy}, via a recursive formula for the derivatives of the transition density. {Tempered stable processes have received a great dealt of attention in the last decade due to their applications in mathematical finance. Among their members, we can list the CGMY model of \cite{Madan}. See Rosi\'nski \cite{Rosinski:2007} for a detailed study of this class of processes.}

We note finally, that throughout the paper we only consider asymptotics for $\bbp(X_t\ge y), y >0$, but that 
our methodology also gives results for $\bbp(X_t\le -y), y >0$, replacing $\nu([y, +\infty))$ by 
$\nu((-\infty, -y])$.  

{
\section{An application: nonparametric estimation of the L\'evy density}\label{Motivation}
In this part we present an application of the small-time asymptotics considered in this work as a matter of motivation. One problem that has received attention in recent years is that of estimating the L\'evy density $s$ of the process in a non-parametric fashion. This means that, by only imposing qualitative constraints on the L\'evy density (e.g. smoothness, monotonicity, etc.), we aim at constructing a function $\hat{s}$ that is consistent with the available observations  of the process $X$. The minimal desirable requirement of our estimator $\hat{s}$ is consistency;
namely, the convergence
\(
	\hat{s}\rightarrow s,
\)
say in a mean-square error sense,  must be ensured when the available sample of the process {increases}.

 When the data available consists of the \emph{whole trajectory} of the process during a time interval $[0,T]$, the problem is equivalent to estimating the intensity function of an inhomogeneous Poisson process {(see e.g.  \cite{Reynaud} for the case of finite intensity functions and \cite{FigHou:2006} for the case of L\'evy processes, where the intensity function could be infinite)}. However, {a continuous-time sampling} is not feasible in reality, and thus, the relevant problem is that of estimating $s$ based on discrete sample data $X_{t_{0}},\dots, X_{t_{n}}$ during a time interval $[0,T]$. In that case, the jumps are latent variables whose statistical properties can {in principle}  be assessed {if} the frequency and time horizon of observations increase to infinity. 
 
 It turns out that asymptotic results such as (\ref{TargetLimit}) and (\ref{GnrlExpansion0}) play important roles in determining how {frequently} one should sample (given the time horizon $T$ at hand) such that the resulting discrete sample contains sufficient information about the whole path.  We can say that a given discrete sample scheme is good enough if we can devise a discrete-based estimator for the parameter of interest that enjoys a rate of convergence comparable to that of a good continuous-based estimator.
 Let us explain this point with a concrete example. Consider the estimation of the following functional of $s$:
 \[
 	\beta(\varphi):=\int \varphi(x) s(x)dx, 
\]
where $\varphi$ is a function that is smooth \emph{on its support}. Assume also that the support of $\varphi$ is an interval $[c,d]$ so that the indicator ${\bf 1}_{[c,d]}$ vanishes in a neighborhood of the origin. A natural continuous-based estimator of $\beta(\varphi)$ is given by 
\[
	\beta^{c}_{_{T}}(\varphi):=\frac{1}{T}\sum_{s\leq{}T} \varphi(\Delta X_{s}).
\]
Using the well-known formulas for the mean 
and variance of Poisson integrals (see e.g. \cite[Proposition 19.5]{Sato}),
the above estimator can be seen to converge to $\beta(\varphi)$, and moreover,
\[
	\bbe (\beta^{c}_{_{T}}(\varphi)-\beta(\varphi))^{2}=\frac{1}{T}\,\beta(\varphi^{2}).
\]
{{}We can thus} say that $\beta^{c}_{_{T}}(\varphi)$ converges to $\beta(\varphi)$  at the rate of {\DRed $O(T^{-1/2})$, in the mean-square sense}. 

Suppose that {{}instead we} use a reasonable discrete-based proxy of $\beta^{c}_{_{T}}$, using the increments $X_{t_{1}}-X_{t_{0}},\dots,X_{t_{n}}-X_{t_{n-1}}$  of the process instead of the jumps $\Delta X_{t}$:
\[
	\beta^{\pi}_{_{T}}(\varphi):=\frac{1}{T}\sum_{i=1}^{n} \varphi( X_{t_{i}}-X_{t_{i-1}}),
\]
where {{}$\pi:t_{0}<\dots<t_{n}=T$}. A natural question is then the following: How frequently  should the process be sampled so that $\beta^{\pi}_{_{T}}(\varphi)\rightarrow\beta(\varphi)$ at a rate of {{}{\DRed $O(T^{-1/2})$}?} To show in a simple manner the connection between the previous question and the asymptotics  (\ref{TargetLimit}), suppose that the sampling is ``regular" in time with fixed time span $\Delta_{n}:=T/n$ between consecutive observations.  In that case, 
we have 
\begin{align*}
	\bbe (\beta^{\pi}_{_{T}}(\varphi)-\beta(\varphi))^{2}&\leq \frac{1}{T}\beta(\varphi^{2}) + 
	\frac{1}{T}\left\{\frac{1}{\Delta_{n}}\bbe {{}\varphi^{2}\left(X_{\Delta_{n}}\right)}-\beta(\varphi^{2})\right\}\\
	&\quad+
	\left\{\frac{1}{\Delta_{n}}\bbe \varphi\left(X_{\Delta_{n}}\right)-\beta(\varphi)\right\}^{2}.
\end{align*}
From the previous inequality we see that the rate of convergence in the limit
\begin{equation}\label{CnvMmntFnct}
	\lim_{\Delta\rightarrow{}0}
	\frac{1}{\Delta}\bbe \varphi\left(X_{\Delta}\right)=\beta(\varphi), 
\end{equation} 
will determine the rate of convergence of $\beta^{\pi}_{_{T}}(\varphi)$ towards $\beta(\varphi)$.
To determine the rate of convergence in (\ref{CnvMmntFnct}), one can simply  link  $\bbe \varphi\left(X_{\Delta}\right)$ to $\bbp(X_{\Delta}\geq y)$, and link
$\beta(\varphi)$ to $\nu([y,\infty))$. This is easy if $\varphi$ is smooth {{}on its support} $[c,d]$. Indeed, we have that
     \[
        \left|\frac{\bbe \varphi\left(X_{\Delta}\right)}{\Delta}
        -\beta(\varphi)\right|\leq
        \left(\|\varphi\|_{\infty}+\|\varphi'\|_{1}\right)
        \sup_{y\in[c,d]}
     \left|\frac{1}{\Delta}\bbp\left[X_{\Delta}\geq{}y\right]-\nu([y,\infty))\right|.
     \]
Hence, the rate of convergence of $\Delta^{-1}\bbp(X_{\Delta}\geq y)$ towards $\nu([y,\infty))$ determines the rate of convergence of $\Delta^{-1}\bbe \varphi\left(X_{\Delta}\right)$ towards $\beta(\varphi)$. In particular, the result  (\ref{TargetLimit}) will tell us that, for  $\beta^{\pi}_{_{T}}(\varphi)$ to converge to $\beta(\varphi)$ at a rate of {\DRed $O(T^{-1/2})$, in the mean-square sense}, it suffices that {\DRed the time span between consecutive observations $\Delta$} is $o(T^{-1/2})$. It is important to remark that (\ref{TargetLimit}) can be {{}seen} to hold uniformly {{}in} {\DRed $y>\underline{y}$, for an arbitrary $\underline{y}>0$}. 

The ideas outlined in this section, as well as the asymptotic result  (\ref{TargetLimit}), 
are heavily exploited in \cite{Fig:2007a} and \cite{Fig:2008a}, where the general problem of nonparametric estimation of the L\'evy density $s$ is studied {{}using Grenander's method of sieves}.
}

\section{Expansions for the transition distribution}\label{Results}

As often, e.g. see \cite{Ruschendorf}, the general strategy
is to decompose the L\'evy
process into two processes: one accounting for the ``small'' jumps and
a compound Poisson process collecting the ``big'' jumps.
Concretely, suppose that $X$ has L\'evy triplet
$(\sigma^{2},b,\nu)$; that is, $X$ admits the decomposition
\begin{equation}\label{LevyItoDecmp}
    X_{t}= bt+\sigma W_{t}+\int_{0}^{t}\int_{|x|\leq{}1}
    x\,
    (\mu-\bar\mu)(dx,ds)+
    \int_{0}^{t}\int_{|x|>1} x\,
    \mu(dx,ds),
\end{equation}
where $W$ is a standard Brownian motion and
$\mu$ is an independent Poisson measure on
$\bbr_{+}\times\bbr\backslash\{0\}$ with
mean measure $\bar\mu(dx,dt):= \nu(dx)dt$.
Note that $\mu$ is the random measure associated to 
the jumps of $X$. 
Given a smooth truncation function $c_{\varepsilon}\in C^{\infty}$ such that 
${\bf 1}_{[\varepsilon/2,\varepsilon/2]}(x)
\leq c_{\varepsilon}(x) \leq  {\bf 1}_{[\varepsilon,\varepsilon]}(x)$, set
\begin{align}\label{TrctedLevy}
    \widetilde{X}^{\varepsilon}_{t}&:=
    \int_{0}^{t}\int_{\bbr} x \, \bar{c}_{\varepsilon}(x)
    \mu(dx,ds),\\
    X^{\varepsilon}_{t} &:= X_{t}- \widetilde{X}^{\varepsilon}_{t}, \label{Rmdr}
\end{align}
where $\bar{c}_{\varepsilon}(x):=1-c_{\varepsilon}$.
It is well-known that  $\widetilde{X}^{\varepsilon}$ is a 
compound Poisson process with intensity of jumps
$\lambda_{\varepsilon}:=\int \bar{c}_{\varepsilon}(x)\nu(dx)$,
and jumps distribution 
$ \bar{c}_{\varepsilon}(x)\nu(dx)/\lambda_{\varepsilon}$.
The remaining process  $X^{\varepsilon}$ is then a L\'evy process with jumps bounded by
$\varepsilon$ and L\'evy triplet 
$(\sigma^{2},b_{\varepsilon},
{c}_{\varepsilon}(x) \nu(dx))$,
where
\[
    b_{\varepsilon}:=b-\int_{|x|\leq{}1}x \bar{c}_{\varepsilon}(x)\nu(dx).
\]

There are two key results that will be used to arrive to (\ref{GnrlExpansion0}). 
The first is the expansion (\ref{MomentExpansionSmooth}). The following tail estimate will also play an important role in the sequel: 
\begin{equation}\label{TailEstm}
        \bbp\left(\left|X^{\varepsilon}_{t}\right|\geq  y\right)
        \leq{} \exp\{a y_{0}\log y_{0}\} \exp\left\{ay-ay\log y\right\}
        t^{ya},
\end{equation}
valid for an arbitrary, but fixed, positive real $a$ in $(0,\varepsilon^{-1})$,
and for any {\DRed $t,y>0$ such that $t< y_{0}^{-1}y$, where $y_{0}$ depends only upon $a$
(see \cite[Lemma 3.2]{Ruschendorf} or \cite[Section 26]{Sato}
for a proof).}

\begin{remk} 
For an alternative proof of (\ref{TailEstm}), 
use a generic concentration inequality such as \cite[Corollary 1]{Houdre:2002} 
to get (when $\sigma = 0$):  
\begin{align*}
    \bbp(X_t^\varepsilon \geq y)&=\bbp(X_t^\varepsilon - \bbe X_t^\varepsilon \geq x) \\
    &\leq e^{-\frac{x}{\varepsilon}+\left(\frac{x}{\varepsilon}+\frac{tV^2}{\varepsilon^2}\right)
\log\left(1+\frac{\varepsilon x}{tV^2}\right)} 
    \leq \left(\frac{eV^2}{\varepsilon x}\right)^{\frac{x}{\varepsilon}}t^{\frac{x}{\varepsilon}},
\end{align*}
whenever $x:=y - \bbe X_t^\varepsilon > 0$, and with $V^2 := \int_{|u|\leq \varepsilon}u^2\nu(du)$.    
Now $\bbe X_{t}^{\varepsilon}=t(b_{\varepsilon}+\int_{\{1< |x|\leq \varepsilon\}} x\nu(dx))$, and as $t\to 0$, $x\to y$ and $(eV^2/\varepsilon x)^{x/\varepsilon} \to (eV^2/\varepsilon y)^{y/\varepsilon}$, 
with moreover $t^{x/\varepsilon}/t^2 = \exp((y-\bbe X_t^\varepsilon -2\varepsilon) \log t/\varepsilon) \to 0$, 
as long as $y> 2\varepsilon$.  Finally, since as $t\to 0$, 
$\bbp(\sigma W_t \geq y/2)/t^2 \to 0$, the general case follows.   
\end{remk}

We are ready to show (\ref{GnrlExpansion0}). Below, $L_{\varepsilon}$ is the infinitesimal generator of $X^{\varepsilon}$ and we use the following notation:
\begin{align*}
	{s}_{\varepsilon}:=c_{\varepsilon} s,\quad
	\bar{s}_{\varepsilon}:=1- s_{\varepsilon}, \quad 
	&L_{\varepsilon}^{0} g =g,\quad  \bar{s}_{\varepsilon}^{*1}=\bar{s}_{\varepsilon}\\
	\bar{s}_{\varepsilon}^{*i}(x)=\int  \bar{s}_{\varepsilon}^{*(i-1)}(x-u)\bar{s}_{\varepsilon}(u)du,&\quad(i\geq 2), \quad \bar{s}_{\varepsilon}^{*0}*g = g.
\end{align*}
\begin{thrm}\label{MainExpansion}
    {\DRed Let $\underline{y}>0$,  $n\geq 1$, and $0 < \varepsilon<\underline{y}/(n+1)\wedge{}1$}.
    Assume that $\nu$ has a  density $s$ such that  for any $0\leq k\leq{} 2n+1$ and any $\delta>0$,
    \[
        a_{k,\delta}:=\sup_{|x|>\delta}|s^{(k)}(x)|<\infty.
    \]
    Then, {\DRed there exists a $t_{0}>0$ such that,  for 
    any $y\geq \underline{y}$ and $0<t<t_{0}$},
\begin{align}\label{FExp1}
    {\DRed \bbp\left(X_{t}\geq{}y\right)
    =e^{-\lambda_{\varepsilon}t}\sum_{j=1}^{n}c_{j}\, \frac{t^{j}}{j!}
    +O_{\varepsilon,\underline{y}}(t^{n+1})},
\end{align}
where 
\[
	c_{j}:=   
    \sum_{i=1}^{j} \binom{j}{i} L_{\varepsilon}^{j-i}\hat{f}_{i}(0),
\]
with $\hat{f}_{i}(x):=\int_{y-x}^{\infty} \bar{s}_{\varepsilon}^{*i}(u) du$.
\end{thrm}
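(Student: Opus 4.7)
The plan is to decompose $X$ into its small-jump L\'evy component $X^{\varepsilon}$ and the independent compound Poisson part $\widetilde{X}^{\varepsilon}$, condition on the number $N_t^{\varepsilon}$ of ``big jumps'' in $[0,t]$ (which is Poisson with parameter $\lambda_{\varepsilon}t$), and then expand each conditional probability by applying (\ref{MomentExpansionSmooth}) to a smooth function of $X^{\varepsilon}$. Letting $\{\xi_i\}_{i\ge 1}$ be i.i.d.\ with density $\bar{s}_{\varepsilon}/\lambda_{\varepsilon}$, independent of $X^{\varepsilon}$, conditioning gives
\begin{align*}
    \bbp(X_t \ge y) = \sum_{k=0}^{\infty} e^{-\lambda_{\varepsilon}t}\frac{(\lambda_{\varepsilon}t)^{k}}{k!}\, \bbp\Bigl(X_t^{\varepsilon}+\sum_{i=1}^{k}\xi_i \ge y\Bigr).
\end{align*}
Terms with $k \ge n+1$ contribute $O(t^{n+1})$ because $\sum_{k\ge n+1}(\lambda_{\varepsilon}t)^{k}/k!=O(t^{n+1})$. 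The $k=0$ term, namely $e^{-\lambda_{\varepsilon}t}\bbp(X_t^{\varepsilon}\ge y)$, is controlled by (\ref{TailEstm}): since $\varepsilon<\underline{y}/(n+1)$, one can choose $a\in((n+1)/\underline{y},\varepsilon^{-1})$, and using that $y\mapsto ay-ay\log y$ is maximized at $y=1$ to bound the prefactor uniformly in $y$, one gets $\bbp(|X_t^{\varepsilon}|\ge y)\le C(a,y_0)\,t^{ya}\le C'\,t^{a\underline{y}}=O(t^{n+1})$ uniformly in $y\ge\underline{y}$.

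For each $1\le k\le n$, the independence of $X_t^{\varepsilon}$ from $\{\xi_i\}$, combined with the fact that $\sum_{i=1}^{k}\xi_i$ has density $\bar{s}_{\varepsilon}^{*k}/\lambda_{\varepsilon}^{k}$, yields
\[
    \bbp\Bigl(X_t^{\varepsilon}+\sum_{i=1}^{k}\xi_i\ge y\Bigr) = \lambda_{\varepsilon}^{-k}\,\bbe\hat{f}_k(X_t^{\varepsilon}).
\]
Since $\bar{c}_{\varepsilon}\in C^{\infty}$ vanishes near the origin and $a_{j,\delta}<\infty$ for $0\le j\le 2n+1$, Leibniz's rule gives that $\bar{s}_{\varepsilon}=\bar{c}_{\varepsilon}s$ has bounded derivatives up to order $2n+1$, and then Young's inequality $\|(\bar{s}_{\varepsilon}^{*k})^{(j)}\|_\infty \le \|\bar{s}_{\varepsilon}^{(j)}\|_\infty\, \lambda_{\varepsilon}^{k-1}$ shows that $\hat{f}_k\in C_b^{2(n-k)+2}$ with derivative bounds independent of $y$. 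Applying (\ref{MomentExpansionSmooth}) at order $n-k$ to $g=\hat{f}_k$ (with process $X^{\varepsilon}$ and generator $L_\varepsilon$) then gives
\[
    \bbe\hat{f}_k(X_t^{\varepsilon}) = \hat{f}_k(0) + \sum_{j=1}^{n-k}\frac{t^{j}}{j!}L_{\varepsilon}^{j}\hat{f}_k(0) + O(t^{n-k+1}),
\]
with the remainder uniform in $y\ge\underline{y}$.

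Assembling these ingredients, the factor $\lambda_{\varepsilon}^{-k}(\lambda_{\varepsilon}t)^k/k!$ collapses to $t^k/k!$, and summing over $1\le k\le n$ produces
\[
    e^{-\lambda_{\varepsilon}t}\sum_{k=1}^n \frac{t^k}{k!}\Bigl[\hat{f}_k(0)+\sum_{j=1}^{n-k}\frac{t^j}{j!}L_{\varepsilon}^{j}\hat{f}_k(0)\Bigr]+O(t^{n+1}).
\]
Reindexing via $m=j+k$ so that $\frac{1}{k!(m-k)!}=\frac{1}{m!}\binom{m}{k}$, the coefficient of $e^{-\lambda_{\varepsilon}t}\,t^m/m!$ becomes $c_m=\sum_{k=1}^{m}\binom{m}{k}L_{\varepsilon}^{m-k}\hat{f}_k(0)$, which is (\ref{FExp1}).

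The principal technical obstacle is ensuring uniformity in $y\ge\underline{y}$ of both the tail bound for the $k=0$ term and the remainder in the expansion of $\bbe\hat{f}_k(X_t^{\varepsilon})$. The former relies on the condition $\varepsilon<\underline{y}/(n+1)\wedge 1$, which is calibrated exactly to force the exponent in (\ref{TailEstm}) above $n+1$; the latter requires estimating $\sup_{y\ge\underline{y}}\|L_{\varepsilon}^{n-k+1}\hat{f}_k\|_{\infty}$, which reduces via Young's inequality for convolutions to the uniform bounds $a_{j,\delta}<\infty$ combined with the compactly-supported smoothness of the truncation $c_\varepsilon$.
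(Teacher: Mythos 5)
Your proof is correct and follows essentially the same route as the paper's: decompose $X=X^\varepsilon+\widetilde X^\varepsilon$, condition on the number of big jumps, dispose of the $k=0$ and $k\ge n+1$ terms via the tail estimate (\ref{TailEstm}) with $a\underline{y}\ge n+1$, apply (\ref{MomentExpansionSmooth}) at order $n-k$ to each $\hat f_k$, and collect coefficients by the reindexing $m=j+k$. The only cosmetic difference is that the paper works with the normalized $\widetilde f_k=\lambda_\varepsilon^{-k}\hat f_k$ and absorbs the factor $\lambda_\varepsilon^k$ at the end, whereas you absorb it from the start; the derivative bounds on $\bar s_\varepsilon^{*k}$ via Leibniz and Young's inequality reproduce the paper's bound $\|L_\varepsilon^{n+1-k}\widetilde f_k\|_\infty\le b_{n,\varepsilon}\,a_{2n+1,\varepsilon/2}$.
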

\begin{proof}
{\DRed Throughout this part, we write $f(x):={\bf 1}_{\{x\geq y\}}$}.
In terms of the decomposition $X:=X^{\varepsilon}+\widetilde{X}^{\varepsilon}$ described at the beginning of this section,  by conditioning on the number of jumps of $\widetilde{X}^{\varepsilon}$ during the interval $[0,t]$,
we have that 
\begin{align}\label{T1}
    \bbe f(X_{t})&
    =\bbe f\left(X^{\varepsilon}_{t}\right)
    e^{-\lambda_{\varepsilon}t}+
    e^{-\lambda_{\varepsilon}t}\sum_{k=n+1}^{\infty}\frac{(\lambda_{\varepsilon}t)^{k}}{k!}
    \bbe f\left(X^{\varepsilon}_{t}+\sum_{i=1}^{k}\xi_{i}\right)\\
    \label{T2}
    &\quad+
    e^{-\lambda_{\varepsilon}t}\sum_{k=1}^{n}\frac{(\lambda_{\varepsilon}t)^{k}}{k!}
    \bbe f\left(X^{\varepsilon}_{t}+\sum_{i=1}^{k}\xi_{i}\right)
\end{align}
where 
$\xi_{i}\stackrel{iid}{\sim} \bar{c}_{\varepsilon}(x) s(x)dx/\lambda_{\varepsilon}$.
Taking {\DRed $a:=(n+1)/\underline{y}$}, (\ref{TailEstm}) and  $0\leq f\leq 1$ imply that  the two terms on the right hand side of (\ref{T1}) are {\DRed $O_{\varepsilon,\underline{y}}(t^{n+1})$} as $t\rightarrow 0$, {\DRed provided that $t<t_{0}:=y_{0}^{-1}\underline{y}$}. Next, for each $k\geq 1$, 
\[
	\bbe f\left(X^{\varepsilon}_{t}+\sum_{i=1}^{k}\xi_{i}\right)=\bbe \widetilde{f}_{k}
	\left(X^{\varepsilon}_{t}\right),
\]
where
\[
	 \widetilde{f}_{k}(x):=\bbe  f\left(x+\sum_{\ell=1}^{k}\xi_{i}\right)
	 =\bbp\left(x+\sum_{\ell =1}^{k}\xi_{i}\geq y\right),
\]
which is $C^{2n+2}_{b}$, since the density of $\xi_{i}$ is $C_{b}^{2n+1}$. 
Then, one can apply (\ref{MomentExpansionSmooth}) to get 
\begin{equation}\label{AE1}
    \bbe \widetilde{f}_{k}(X^{\varepsilon}_{t})= 
    \sum_{i=0}^{n-k} \frac{t^{i}}{i!}
    L^{i}_{\varepsilon}\widetilde{f}_{k}(0)+
    \frac{t^{n+1-k}}{(n-k)!}
    \int_{0}^{1} (1-\alpha)^{n-k}\bbe\left\{
    L_{\varepsilon}^{n+1-k} \widetilde{f}_{k}(X^{\varepsilon}_{\alpha t})\right\}d\alpha.
\end{equation}
Let $L_{\varepsilon}$ be the infinitesimal generator of $X^{\varepsilon}$, given by
\begin{align*}
    (L_{\varepsilon}g)(x)&
    =b_{\varepsilon}
    g'(x)+
    \frac{\sigma^{2}}{2}g''(x)+\int\int_{0}^{1} g''(x+\beta w) (1-\beta) d\beta w^{2}c_{\varepsilon}(w) s(w) dw,
\end{align*}
for $g\in C^{2}_{b}$, and for $k\geq{}1$, let 
\[
	d\pi_{k}^{\varepsilon}:= \Pi_{\ell=1}^{k} (1-\beta_{\ell})d\beta_{\ell} w_{\ell}^{2}c_{\varepsilon}(w_{\ell}) s(w_{\ell}) dw_{\ell},
\]
which clearly a finite measure on $[0,1]^{k}\times \bbr^{k}$. Then, note that
\begin{equation}\label{DcmL}
	(L_{\varepsilon}^{i}g)(x)=
	\sum_{{\bf k}\in\mathcal{K}_{i}}
        c_{\bf k}\binom{i}{\bf k}  A^{\varepsilon}_{{\bf k}} g (x),
\end{equation}
where $\mathcal{K}_{i}:=\{{\bf k}:=(k_{1},k_{2},k_{3}):k_{1}+k_{2}+k_{3}=i\}$,
\begin{align*}
   c_{\bf k}&:= b_{\varepsilon}^{k_{1}}\left\{\sigma^{2}/2\right\}^{k_{2}},\\
    A^{\varepsilon}_{{\bf k}} g (x)&:=
    \int g^{(k_{1}+2k_{2}+2k_{3})}
    \left(x+
        \displaystyle{\sum_{\ell=1}^{k_{3}}}\beta_{\ell}w_{\ell}
    \right)d\pi_{k_{3}}^{\varepsilon},
\end{align*}
if $k_{3}\geq 1$ and $A_{{\bf k}} g (x):= g^{(k_{1}+2k_{2})}(x)$, if $k_{3}=0$.
Since 
\[
	\widetilde{f}_{k}^{(\ell)}(x)=\lambda_{\varepsilon}^{-k} (-1)^{\ell-1}\bar{s}_{\varepsilon}^{*(k-1)}* 	\bar{s}_{\varepsilon}^{(\ell-1)}(y-x), 
\]
and $\bar{s}_{\varepsilon}(\cdot)\in C^{2n+1}_{b}$,
there exists a constant 
$b_{n,\varepsilon}<\infty$ {\DRed (independent of $y$)}, such that
\[
	\|L_{\varepsilon}^{n+1-k} \widetilde{f}_{k}\|_{\infty}\leq b_{n,\varepsilon}
	(a_{2n+1,\varepsilon/2})
\]
and so, the last term in (\ref{AE1}) is $O(t^{n+1-k})$. Plugging (\ref{AE1}) into (\ref{T1}) and rearranging terms, we get 
\begin{align*}
    \bbe f(X_{t})&
    =e^{-\lambda_{\varepsilon}t}\sum_{j=1}^{n}\frac{t^{j}}{j!}
    \sum_{k=1}^{m} \binom{m}{k} \lambda_{\varepsilon}^{k}L_{\varepsilon}^{m-k}\widetilde{f}_{k}(0)
    +{\DRed O_{\varepsilon,\underline{y}}(t^{n+1})},
\end{align*}
which is exactly (\ref{FExp1}), because $\lambda^{k}_{\varepsilon}L_{\varepsilon}^{m-k}\widetilde{f}_{k}=L_{\varepsilon}^{m-k}\hat{f}_{k}$.
\end{proof}

\begin{remk}\label{Simplify}\hfill
\begin{enumerate}
\item[(i)] The expansion (\ref{GnrlExpansion0}) follows from (\ref{FExp1}). Indeed, expanding $e^{-\lambda_{\varepsilon}t}$, we get that {\DRed for 
    any $y\geq \underline{y}$ and $0<t<t_{0}$}:
\begin{equation}\label{GnrlExpansion1}
        \bbp\left(X_{t}\geq{}y\right)=
        \sum_{k=1}^{n} d_{k}\,\frac{t^{k}}{k!}+{\DRed O_{\varepsilon,\underline{y}}(t^{n+1})},
    \end{equation}
with 
\begin{equation}\label{Cnst1}
	d_{k}=\sum_{j=1}^{k}  \binom{k}{j} c_{j} (-\lambda_{\varepsilon})^{k-j}.
\end{equation}
In the next section we give a more explicit expression for $d_{k}$. 

\item[(ii)] The first two terms in (\ref{GnrlExpansion1}) can be  easily computed:
\begin{align*}
	d_{1}&= \int_{y}^{\infty} s(u) du =\nu([y,\infty))\\
	d_{2}&= - 2\lambda_{\varepsilon} \nu([y,\infty))
	+ \iint {\bf 1}_{\{u_{1}+u_{2}\geq{}y\}}
        \bar{s}_{\varepsilon}(u_{1})\bar{s}_{\varepsilon}(u_{2}) du_{1} du_{2}\\
        &\quad-\sigma^{2}s'(y)
        +2b_{\varepsilon}s(y)-2
        \int \int_{0}^{1}s'(y-\beta w)
        (1-\beta)d\beta  w^{2} s_{\varepsilon}(w)d w.\
\end{align*}

\item[(iii)]
The coefficients $d_{k}$ in (\ref{GnrlExpansion1}) are independent of $\varepsilon$ since they can be defined iteratively as limits of $ \bbp\left(X_{t}\geq{}y\right)$. 
For instance, 
\begin{align*}
	\lim_{t\rightarrow{}0}\frac{1}{t}\bbp\left( X_{t}\geq{}y\right)=d_{1},\quad
	\lim_{t\rightarrow{}0}
    \frac{1}{t}\left\{\frac{1}{t}\bbp\left( X_{t}\geq{}y\right)
    -d_{1}\right\}=d_{2}.
\end{align*}
One can obtain an expression for $d_{2}$ that is independent of $\varepsilon$ by taking the 
limit as $\varepsilon\rightarrow{}0$. For instance, if $X$ is of bounded variation with drift $b_{0}:=b-\int_{|x|\leq{}1} x\nu(dx)$ and volatility $\sigma$, then $d_{2}$ becomes 
\begin{align*}
	d_{2}&=-\sigma^{2} s'(y) +2 b_{0} s(y) -(\nu([y,\infty)))^{2}\\
	&\quad+\int_{0}^{y} \int_{y-x}^{y} s(u)du s(x) dx
	+2\int_{y}^{\infty}\int_{y-x}^{0} s(u)du s(x) dx.	
\end{align*}
In general,  it turns out (see the Appendix) that $d_{2}$  ``simplifies" to the following expression when $\varepsilon\rightarrow{}0$:
{\DRed
\begin{align*}
d_{2}&=-\sigma^{2}\,s'(y)+  2b s(y)-
    \nu((y,\infty))^{2}
    +\nu((y/2,y))^{2}\\
    &\quad
    +2\int_{-\infty}^{-y/2}
    \int_{y-x}^{y} s(u) du s(x) dx
    -2s(y)\int_{y/2<|x|\leq{}1} x s(x) d x\\
    &\quad
    + 2  \int_{-y/2}^{y/2}
    \int_{y-x}^{y} \left\{s(u)-s(y)\right\} du s(x) dx. \quad \Box
\end{align*}
}
\end{enumerate}
\end{remk}

We now present an alternative proof for the expansion (\ref{TargetLimit1}) that requires less stringent assumptions. The following asymptotic result due to Jacod \cite{Jacod05} will be of importance:
\begin{equation}\label{LimitWithsigma}
    \lim_{t\rightarrow{}0}\frac{1}{t}\; \bbe g(X_{t})
    =\sigma^{2}+\int g(x) \nu(dx),
\end{equation}
valid if $g$ is $\nu$-continuous, bounded, and  such that 
$g(x)\sim x^{2}$, as $x\rightarrow{}0$.
{
\begin{prop}\label{2ndExpansion}
    Let $y>0$ and $0 < \varepsilon<y/2\wedge{}1$.
    Assume that $\nu$ has a  density $s$ which is 
    bounded outside of the interval 
    $[-\varepsilon,\varepsilon]$, and that
    is $C^{1}$ in a neighborhood of $y$. Then, the limit (\ref{TargetLimit1}) exists and can be written as:
\begin{align*}
    {\frac{d_{2}}{2}}&=
    -\frac{\sigma^{2}}{2}\,s'(y)+  b_{\varepsilon}\, s(y)
        +\int
        \int_{0}^{x}\left\{s(y-u)-s(y)\right\}du  s_{\varepsilon}(x) dx\\
    &\quad+\frac{1}{2} \int\int
    {\bf 1}_{\{x+u\geq{}y\}} \bar{s}_{\varepsilon}(u) \bar{s}_{\varepsilon}(x)du dx- \lambda_{\varepsilon}\nu([y,\infty)).
\end{align*}
\end{prop}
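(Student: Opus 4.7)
The plan is to start from the decomposition $X_{t} = X_{t}^{\varepsilon} + \widetilde{X}_{t}^{\varepsilon}$ and condition on the number of big jumps of $\widetilde{X}^{\varepsilon}$ in $[0,t]$, exactly as in the proof of Theorem \ref{MainExpansion}. Truncating the Poisson series at $k=2$, one obtains
\[
    \bbp(X_{t}\geq y) = e^{-\lambda_{\varepsilon}t}\bbp(X_{t}^{\varepsilon}\geq y) + \lambda_{\varepsilon}t\, e^{-\lambda_{\varepsilon}t}\,\bbe\widetilde{f}_{1}(X_{t}^{\varepsilon}) + \frac{(\lambda_{\varepsilon}t)^{2}}{2}e^{-\lambda_{\varepsilon}t}\,\bbe\widetilde{f}_{2}(X_{t}^{\varepsilon}) + R(t),
\]
where $\widetilde{f}_{k}(x):=\bbp(x+\xi_{1}+\dots+\xi_{k}\geq y)$ and the remainder is $O(t^{3})$ by (\ref{TailEstm}). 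Choosing $a$ slightly larger than $2/y$ in (\ref{TailEstm}) also gives $e^{-\lambda_{\varepsilon}t}\bbp(X_{t}^{\varepsilon}\geq y) = o(t^{2})$. For the $k=2$ term, $\widetilde{f}_{2}$ is continuous and bounded (it is the survival function of the convolution of two densities), so by bounded convergence $\bbe\widetilde{f}_{2}(X_{t}^{\varepsilon})\to \widetilde{f}_{2}(0)=\lambda_{\varepsilon}^{-2}\iint {\bf 1}_{\{x+u\geq y\}}\bar{s}_{\varepsilon}(x)\bar{s}_{\varepsilon}(u)\,dx\,du$, yielding the last-but-one summand in the target formula.

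The main work lies in the $k=1$ term. Setting $h(x):=\lambda_{\varepsilon}\widetilde{f}_{1}(x)=\int_{y-x}^{\infty}\bar{s}_{\varepsilon}(v)\,dv$ and using $h(0)=\nu([y,\infty))=d_{1}$, a direct computation reduces the problem to evaluating
\[
    \lim_{t\rightarrow 0}\frac{\bbe[h(X_{t}^{\varepsilon}) - h(0)]}{t} \;-\; \lambda_{\varepsilon}\,\nu([y,\infty)).
\]
Because $s$ is $C^{1}$ near $y$ and $\bar{s}_{\varepsilon}=s$ there (since $y>\varepsilon$), the function $h$ is $C^{1}$ near $0$ with $h'(0)=s(y)$ and one-sided second order expansion governed by $-s'(y)$. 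Split $h(x)-h(0) = h'(0)x + [h(x)-h(0)-h'(0)x]$: the linear part produces $h'(0)\,\bbe X_{t}^{\varepsilon}/t = b_{\varepsilon}s(y)$ (since $X^{\varepsilon}$ has jumps bounded by $\varepsilon<1$, so $\bbe X_{t}^{\varepsilon}=b_{\varepsilon}t$). For the quadratic remainder, multiply by a smooth cutoff $\chi$ equal to $1$ on $[-1,1]$ and $0$ off $[-2,2]$ so that $\tilde g(x):=[h(x)-h(0)-h'(0)x]\chi(x)$ is bounded, $\nu_{\varepsilon}$-continuous, and satisfies $\tilde g(x)\sim -\tfrac{s'(y)}{2}x^{2}$ at the origin. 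Jacod's limit (\ref{LimitWithsigma}), applied to $X^{\varepsilon}$, then gives
\[
    \lim_{t\rightarrow 0}\frac{1}{t}\bbe\tilde g(X_{t}^{\varepsilon}) = -\frac{\sigma^{2}}{2}s'(y) + \int \tilde g(x)\,s_{\varepsilon}(x)\,dx,
\]
and the contribution of $(1-\chi)$ is negligible by the tail estimate (\ref{TailEstm}). Since $s_{\varepsilon}$ is supported in $[-\varepsilon,\varepsilon]$ where $\chi\equiv 1$, the integral on the right is $\int\int_{0}^{x}[s(y-v)-s(y)]\,dv\,s_{\varepsilon}(x)\,dx$ after an elementary simplification using $\bar{s}_{\varepsilon}(y-v)=s(y-v)$ for $|v|\leq\varepsilon$ and cancellation of the $s(y)x$ term against $h'(0)\int x s_{\varepsilon}(x)dx$.

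The main obstacle is the application of Jacod's asymptotic to $X^{\varepsilon}$: the natural choice $g(x)=h(x)-h(0)-h'(0)x$ is unbounded, so the cut-off argument above is needed, together with a verification (via (\ref{TailEstm}) applied with large enough exponent) that the discarded piece is $o(t)$. Care must also be taken that all terms produced are independent of $\varepsilon$ in the limit, which is automatic since the left-hand side $d_{2}/2$ does not involve $\varepsilon$. Collecting the pieces $-\tfrac{\sigma^{2}}{2}s'(y) + b_{\varepsilon}s(y) + \int\int_{0}^{x}[s(y-v)-s(y)]\,dv\,s_{\varepsilon}(x)\,dx$ from the $k=1$ term, $\tfrac{1}{2}\iint {\bf 1}_{\{x+u\geq y\}}\bar{s}_{\varepsilon}(u)\bar{s}_{\varepsilon}(x)\,du\,dx$ from the $k=2$ term, and $-\lambda_{\varepsilon}\nu([y,\infty))$ from expanding $e^{-\lambda_{\varepsilon}t}$ against $h(0)$, yields the stated expression for $d_{2}/2$.
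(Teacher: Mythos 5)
Your proposal is correct and follows essentially the same route as the paper: condition on the number of big jumps, observe that only the $k=1$ term requires real work, Taylor-split it into a linear piece (giving $b_{\varepsilon}s(y)$) and a second-order remainder, and apply Jacod's limit (\ref{LimitWithsigma}) to that remainder, whose $-\tfrac{s'(y)}{2}x^{2}$ behavior at the origin produces the $-\tfrac{\sigma^{2}}{2}s'(y)$ term and the inner integral against $s_{\varepsilon}$. The one place where you deviate cosmetically is the boundedness issue: the paper truncates at $\pm\varepsilon$ \emph{inside} the auxiliary functions $f_{\pm}$ and the linear corrector $h$, so everything is automatically bounded, whereas you keep the full $h$ and insert a smooth cutoff $\chi$; this is fine, but be aware that killing the $(1-\chi)$ piece needs slightly more than (\ref{TailEstm}) alone, because the $h'(0)X^{\varepsilon}_{t}$ term forces you to control $\tfrac{1}{t}\bbe\bigl[|X^{\varepsilon}_{t}|\,{\bf 1}_{\{|X^{\varepsilon}_{t}|>1\}}\bigr]$, which is most cleanly dispatched by Cauchy--Schwarz together with the tail bound or the exponential moments of the bounded-jump process $X^{\varepsilon}$.
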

}
\begin{proof}
Let $f(x):={\bf 1}_{\{x\geq{}y\}}$ and let
\begin{equation*}
    A(t):=
    \frac{1}{t}\left\{
    \frac{1}{t}\; \bbe f(X_{t})
    -\int f(x) \nu(dx)\right\}.
\end{equation*} 
With the notation of Theorem  \ref{MainExpansion}, we have
\begin{align}\label{AuxEq0}
    A(t)&=\frac{1}{t^{2}}\bbe f\left(X^{\varepsilon}_{t}\right)
    e^{-\lambda_{\varepsilon}t}+
    e^{-\lambda_{\varepsilon}t}
    \int\frac{1}{t}\left\{
    \bbe f(X_{t}^{\varepsilon}+x)-f(x)\right\}\bar{s}_{\varepsilon}(x) dx\\
    &-\frac{1-e^{-\lambda_{\varepsilon} t}}{t}
    \int f(x) \bar{s}_{\varepsilon}(x) dx+
    e^{-\lambda_{\varepsilon} t}\sum_{n=2}^{\infty}\frac{(\lambda_{\varepsilon})^{n}
    t^{n-2}}{n!}
    \bbe f\left(X^{\varepsilon}_{t}+\sum_{i=1}^{n}\xi_{i}\right),\nonumber
\end{align}
since $\varepsilon<y/2$.
In view of (\ref{TailEstm}), the first term on the right hand side vanishes when $t\rightarrow 0$.
 Then, except for the second term, all the other terms are
easily seen to be convergent. Let us thus analyze the second term. Let
    \[
        B(t):=\int \left\{
        \bbe f(X_{t}^{\varepsilon}+x)-f(x)\right\}\bar{s}_{\varepsilon}(x) dx.
    \]
Since $0<\varepsilon<y/2$ and the support of $c_{\varepsilon}$ is $[-\varepsilon,\varepsilon]$, 
$B(t)$ can be decomposed as
\begin{align*}
    B(t)&:= \int_{y-\varepsilon}^{y}
    \bbp\left(X_{t}^{\varepsilon}\geq{}y-x\right) {s}(x) dx
    -\int_{y}^{y+\varepsilon}
    \bbp\left(X_{t}^{\varepsilon}< y-x\right) {s}(x) dx\\
    &\quad+\int_{x<y-\varepsilon}
    \bbp\left(X_{t}^{\varepsilon}\geq{}y-x\right)\bar{s}_{\varepsilon}(x)  dx
    -\int_{y+\varepsilon}^{\infty}
    \bbp\left\{X_{t}^{\varepsilon}< y-x\right\} \bar{s}_{\varepsilon}(x) dx.
\end{align*}
Since $s$ is bounded and integrable away from the origin,
the last two terms can be upper bounded by
\(
    \lambda_{\varepsilon}
    \bbp\left\{|X_{t}^{\varepsilon}|>\varepsilon\right\},
\)
which, divided by $t$, converges to $0$ in view of (\ref{FrstLimit}).
After changing variables to $u=y-x$
and applying Fubini's Theorem,
the first term above becomes:
\begin{align*}
    \int_{y-\varepsilon}^{y}
    \bbp\left(X_{t}^{\varepsilon}\geq{}y-x\right) {s}(x) dx
    = \int_{0}^{\varepsilon}
    \bbp\left(X_{t}^{\varepsilon}\geq{}u\right) s(y-u)du
    =\bbe f_{+}\left(X_{t}^{\varepsilon}\right),
\end{align*}
where
\(
    f_{+}(x):=\int_{0}^{(x\wedge\varepsilon)\vee 0} s(y-u) du.
    \)
Similarly,
\begin{align*}
    \int_{y}^{y+\varepsilon}
    \bbp\left(X_{t}^{\varepsilon}<y-x\right) s(x)dx
    = \int_{0}^{\varepsilon}
    \bbp\left(X_{t}^{\varepsilon}<-u\right) s(y+u)du
    =\bbe f_{-}\left(X_{t}^{\varepsilon}\right),
\end{align*}
where
\(
    f_{-}(x):=\int_{0}^{(-x\wedge\varepsilon)\vee 0} s(y+u) du.
    \)
Next, consider the function
\begin{align*}
    \widetilde{f}(x)&:=
    \left\{
        \begin{array}{ll}
        f_{+}(x)-
    s(y)\,(x\wedge{}\varepsilon),
        & x>0\\
        \\
    -f_{-}(x)+
    s(y)\,(-x\wedge{}\varepsilon),
        & x<0, 
    \end{array}\right.
\end{align*}
and note that
\(
    \lim_{x\rightarrow{}0}\widetilde{f}(x)/x^{2}
    =- s'(y)/2.
\)
In view of  (\ref{LimitWithsigma}),
we conclude that 
\begin{align*}
    \lim_{t\rightarrow{}0}\frac{1}{t}
     \bbe\widetilde{f}\left(X_{t}^{\varepsilon}\right)
    &=-\frac{s'(y)}{2}\,\sigma^{2}+
    \int
    \widetilde{f}(x) c_{\varepsilon}(x) s(x) dx.
\end{align*}
Thus,  the sum of the first two terms in the decomposition of $B(t)$ are 
\begin{align}\label{AuxEq}
    \bbe f_{+}\left(X_{t}^{\varepsilon}\right)
    -\bbe f_{-}\left(X_{t}^{\varepsilon}\right)&=
    \bbe \widetilde{f}\left(X_{t}^{\varepsilon}\right)
    +s(y)\bbe
    h(X_{t}^{\varepsilon}),
\end{align}
where $h(x)=x{\bf 1}_{|x|\leq{}\varepsilon} -\varepsilon {\bf 1}_{x<-\varepsilon}+
\varepsilon {\bf 1}_{x>\varepsilon}$. 
Let us analyze the last term in (\ref{AuxEq}):
\begin{align*}
    \lim_{t\rightarrow{}0}\frac{1}{t}\bbe
    h(X_{t}^{\varepsilon})&=
    \lim_{t\rightarrow{}0}\frac{1}{t}\bbe
    X_{t}^{\varepsilon}
    -\lim_{t\rightarrow{}0}\frac{1}{t}\bbe
    X_{t}^{\varepsilon}\,{\bf 1}_{\{|X_{t}^{\varepsilon}|>\varepsilon\}}\\
    \quad&+\varepsilon\lim_{t\rightarrow{}0}\frac{1}{t}
    \bbp\{X_{t}^{\varepsilon}>\varepsilon\}
    -\varepsilon\lim_{t\rightarrow{}0}\frac{1}{t}
    \bbp\{X_{t}^{\varepsilon}<-\varepsilon\}=b_{\varepsilon}.
\end{align*}
We are finally able to give the limit of
$B(t)/t$:
    \begin{align*}
        \lim_{t\rightarrow{}0} \frac{1}{t}B(t)
        &= -\frac{s'(y)}{2}\,\sigma^{2}+ s(y)\, b_{\varepsilon}
        +\int \widetilde{f}(x) {c}_{\varepsilon}(x) s(x) dx.
    \end{align*}
A little extra work leads to the expression in the statement of the result.
\end{proof}

{
    It is not clear whether or not
   Proposition \ref{2ndExpansion} remains
    true when $\sigma\neq {}0$ and  the density of $\nu$ is not 
    differential in a neighborhood of $y$.
    If $\sigma=0$, one can relax the differentiability condition as follows.
\begin{prop}\label{PrJmpCs}
    Let $y>0$ and $0 < \varepsilon<y/2\wedge{}1$.
    Assume that $\nu$ has a density $s$ which is 
    bounded outside of the interval 
    $[-\varepsilon,\varepsilon]$ and that
    is continuous in a neighborhood of $y$. Assume also that $\sigma=0$ and that 
    \begin{equation}\label{FntVarCnd}
     	\int_{\{|x|\leq{}1\}}|x|\nu(dx)<\infty.
    \end{equation}
     Then, the limit (\ref{TargetLimit}) exists and is given by
\begin{align*}
    {\frac{d_{2}}{2}}&:=  b_{0}\, s(y)
        +\int
        \int_{0}^{x}s(y-u)du s_{\varepsilon}(x) dx\\
        &\quad+\frac{1}{2} \int\int
    {\bf 1}_{\{x+u\geq{}y\}} \bar{s}_{\varepsilon}(u) \bar{s}_{\varepsilon}(x)du dx- \lambda_{\varepsilon}\nu([y,\infty)),
\end{align*}
where $b_{0}:=b-\int_{|x|\leq{}1}x\nu(dx)$.
\end{prop}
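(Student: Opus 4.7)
The plan is to follow the strategy of Proposition~\ref{2ndExpansion}, modifying only the step that exploited differentiability of $s$. Conditioning on the jumps of $\widetilde{X}^\varepsilon$ produces the same decomposition (\ref{AuxEq0}) for $A(t) := t^{-1}\{t^{-1}\bbe f(X_t) - \int f\,d\nu\}$ with $f(x) := {\bf 1}_{\{x \geq y\}}$. Setting $\sigma = 0$, three of the four summands have the same limits as in the proof of Proposition~\ref{2ndExpansion}: the first vanishes by (\ref{TailEstm}), the third tends to $-\lambda_\varepsilon \nu([y,\infty))$, and only the $n=2$ term of the Poisson sum survives, contributing $\tfrac{1}{2} \iint {\bf 1}_{\{x+u \geq y\}} \bar{s}_\varepsilon(u) \bar{s}_\varepsilon(x)\,du\,dx$. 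The effort concentrates on the second summand $e^{-\lambda_\varepsilon t} B(t)/t$; decomposing $B(t)$ exactly as in the proof of Proposition~\ref{2ndExpansion}, the two tail pieces vanish and the two remaining ones combine to $\bbe G(X_t^\varepsilon)$ with $G(x) := f_+(x) - f_-(x)$. Using the continuity of $s$ at $y$ and taking $\varepsilon$ small enough that $s$ is continuous on $[y-\varepsilon, y+\varepsilon]$, one checks that $G(x) = \int_0^x s(y-v)\,dv$ for $|x| \leq \varepsilon$, $G$ is bounded (constant for $|x| \geq \varepsilon$) with $G(0) = 0$, and $G \in C^1((-\varepsilon,\varepsilon))$ with $G'(0) = s(y)$.

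The heart of the proof is to establish
\begin{equation*}
\lim_{t \to 0} \frac{1}{t}\,\bbe G(X_t^\varepsilon) \;=\; b_0\,s(y) + \int G(u)\,s_\varepsilon(u)\,du.
\end{equation*}
In Proposition~\ref{2ndExpansion} the differentiability of $s$ at $y$ allowed the further subtraction $\widetilde f := G - s(y)h$ with $\widetilde f(x)/x^2 \to -s'(y)/2$, whence Jacod's second-order formula (\ref{LimitWithsigma}) applied. Here only $G(x)/x \to s(y)$ is available, so a first-order substitute for (\ref{LimitWithsigma}) must be used. The hypotheses $\sigma = 0$ and (\ref{FntVarCnd}) make this available: $X_t^\varepsilon$ admits the finite-variation representation $X_t^\varepsilon = b_0\,t + J_t^\varepsilon$ with $J^\varepsilon$ pure-jump of L\'evy density $s_\varepsilon$, and (\ref{FntVarCnd}) ensures $\int |u|\,s_\varepsilon(u)\,du < \infty$, so the generator
\[
L_\varepsilon^{\mathrm{fv}} g(x) := b_0\, g'(x) + \int [g(x+u) - g(x)]\,s_\varepsilon(u)\,du
\]
is well-defined on $g \in C^1_b$. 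I would then apply It\^o's formula for finite-variation semimartingales and take expectations to obtain the Dynkin-type identity $\bbe g(X_t^\varepsilon) - g(0) = \int_0^t \bbe L_\varepsilon^{\mathrm{fv}} g(X_s^\varepsilon)\,ds$; dividing by $t$ and letting $t \to 0$ (using $X_s^\varepsilon \to 0$ in probability together with the boundedness and continuity of $L_\varepsilon^{\mathrm{fv}} g$) yields $\lim_{t \to 0} t^{-1}[\bbe g(X_t^\varepsilon) - g(0)] = L_\varepsilon^{\mathrm{fv}} g(0)$.

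Since $G$ has kinks at $\pm \varepsilon$ and is not globally $C^1$, the final step is to split $G = G^{(1)} + G^{(2)}$ with $G^{(1)} \in C^1_b$ agreeing with $G$ on $[-\varepsilon/2, \varepsilon/2]$ (so $G^{(1)}(0) = 0$ and $(G^{(1)})'(0) = s(y)$) and $G^{(2)}$ bounded with support in $\{|x| \geq \varepsilon/2\}$. The first-order identity above applied to $G^{(1)}$, together with the elementary small-time limit $\lim_{t \to 0} t^{-1} \bbe G^{(2)}(X_t^\varepsilon) = \int G^{(2)}(u)\,s_\varepsilon(u)\,du$ (valid because $G^{(2)}$ vanishes in a neighborhood of $0$, so only the finitely many jumps of $X^\varepsilon$ of size $\geq \varepsilon/2$ contribute, controlled by (\ref{FrstLimit}) applied to $X^\varepsilon$), delivers the displayed identity. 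Combining the four limits and simplifying via $b_\varepsilon = b_0 + \int u\,s_\varepsilon(u)\,du$ yields the stated expression for $d_2/2$. The hard part is this first-order passage to the limit for the non-smooth $G$; the finite-variation hypothesis is precisely what enables it, playing the role that differentiability of $s$ at $y$ played in Proposition~\ref{2ndExpansion}.
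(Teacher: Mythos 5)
Your proof is correct and reaches the stated formula, but it travels a different road from the paper's. The paper dispatches the hard term by citing Jacod's first-order small-time asymptotic (equation~(\ref{LimitWithoutsigma})): for $g$ continuous, bounded, with $g(0)=0$ and $g$ linear near $0$, $\lim_{t\to 0}t^{-1}\bbe g(X^\varepsilon_t)$ equals the drift $b_0$ times $g'(0)$ plus $\int g\,d\nu^\varepsilon$. Since $\widehat{f}=G$ has exactly this behavior (with $G'(0)=s(y)$ by continuity of $s$ at $y$), the paper applies the cited result directly and is done in two lines. You instead rebuild that asymptotic from scratch: writing $X^\varepsilon_t=b_0 t+J^\varepsilon_t$ under (\ref{FntVarCnd}), invoking It\^o/compensation to get the Dynkin identity $\bbe g(X^\varepsilon_t)-g(0)=\int_0^t\bbe L^{\mathrm{fv}}_\varepsilon g(X^\varepsilon_s)\,ds$ for $g\in C^1_b$, passing to the limit using boundedness and continuity of $L^{\mathrm{fv}}_\varepsilon g$, and then handling the kinks of $G$ at $\pm\varepsilon$ by the decomposition $G=G^{(1)}+G^{(2)}$ with $G^{(1)}\in C^1_b$ near $0$ and $G^{(2)}$ supported away from $0$ (the latter controlled by (\ref{FrstLimit})). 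This is more work but is self-contained, and it makes transparent exactly where the finite-variation hypothesis enters (namely, in the integrability of $u\mapsto|u|s_\varepsilon(u)$ needed for $L^{\mathrm{fv}}_\varepsilon g$ to make sense on $C^1_b$), whereas the paper delegates that to Jacod's theorem. One small caveat: the hypotheses only give continuity of $s$ on some $(y-\delta,y+\delta)$ with possibly $\delta<\varepsilon$, so $G$ is $C^1$ only on $(-\delta,\delta)$; your splitting should then be performed at level $\delta/2$ rather than $\varepsilon/2$, but this does not change the argument. Also note that your bookkeeping with $b_\varepsilon=b_0+\int u\,s_\varepsilon(u)\,du$ correctly reconciles your $b_0 s(y)+\int G\,s_\varepsilon$ limit with the $b_\varepsilon s(y)$ appearing in the $\sigma=0$ specialization of Proposition~\ref{2ndExpansion}.
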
 
\begin{proof}
	The proof is very similar to that of Proposition \ref{2ndExpansion}. However, instead of 
	(\ref{LimitWithsigma}), we use the following asymptotic result 
\begin{equation}\label{LimitWithoutsigma}
    \lim_{t\rightarrow{}0}\frac{1}{t}\; \bbe g(X_{t})
    =|b_{0}|+\int g(x) \nu(dx),
\end{equation}
valid for any continuous bounded function $g$ such that 
$g(x)\sim |x|$, as $x\rightarrow{}0$ (see e.g. Jacod \cite{Jacod05}).
Define the function 
\begin{align*}
    \widehat{f}(x)&:=
    \left\{
        \begin{array}{ll}
        f_{+}(x),
        & x>0,\\
        \\
    -f_{-}(x),
        & x<0, 
    \end{array}\right.
\end{align*}
and note that
\(
    \lim_{x\rightarrow{}0}\widehat{f}(x)/x
    =s(y).
\)
By (\ref{LimitWithoutsigma}),
\begin{align}\label{LmtLnrGrwth}
    \lim_{t\rightarrow{}0}\frac{1}{t}
     \bbe\widehat{f}\left(X_{t}^{\varepsilon}\right)
    &=s(y)b_{0}+
    \int_{|x|\leq{}\varepsilon}
    \widehat{f}(x) \nu(dx).
\end{align}
Using the arguments of the proof of Proposition \ref{2ndExpansion}, 
(\ref{LmtLnrGrwth}) implies that 
\begin{equation*}
        \lim_{t\rightarrow{}0} \frac{1}{t}B(t)
        = b_{0}\, s(y)
        +\int
        \int_{0}^{x}s(y-u) s_{\varepsilon}(x) dx,
    \end{equation*}
and this gives the value of $d_{2}$ stated in the statement of the result.
\end{proof}

\begin{remk}
	The continuity of $s$ is needed in Proposition \ref{PrJmpCs}, as the following example suggested by Philippe Marchal shows (see \cite{Marchal}, for further developments). Let $X_{t}:=S_{t}+Y_{t}$, where $S$ is a strictly $\alpha$-stable L\'evy process such that $S_{t}\ed t^{1/\alpha} S_{1}$ and $Y$ is an independent compound Poisson with jumps $\{\xi_{i}\}_{i}$ and jump intensity $1$. 
Suppose that $1<\alpha<2$ and that the density $p$ of $\xi$ is such that 
\(
	p(y^{-})\neq p(y^{+}).
\)
Notice that the L\'evy density of the process is $s(x)=x^{-\alpha-1}+p(x)$.
Then, as shown next,
\[
    C(t):=\frac{1}{t^{1/\alpha}}\left\{\frac{1}{t}\bbp\left( X_{t}\geq{}y\right)
    -\nu([y,\infty))\right\},
\]
converges to a non-zero limit as $t\rightarrow{}0$, and so, (\ref{TargetLimit}) is infinite.
Indeed, conditioning in the number of jumps of the compound Poisson component $Y_{t}$, 
\begin{align*}
    \bbp\left( X_{t}\geq y\right)&
    =\bbp\left( S_{t}\geq y\right)
    e^{-t}+
    e^{-t}\sum_{n=1}^{\infty}\frac{t^{n}}{n!}
    \;\bbp\left( S_{t}+\sum_{i=1}^{n}\xi_{i}\geq y\right).
\end{align*}
Then,  one easily writes 
\begin{align*}
    C(t)&=t^{1-\frac{1}{\alpha}}\,
    e^{-t}\cdot \frac{1}{t}\left\{ \frac{1}{t} \bbp (S_{t}\geq{}y)-\int_{y}^{\infty} x^{-\alpha-1}dx\right\}\\
    &\quad+
    e^{-t}\, t^{-\frac{1}{\alpha}}\left\{  \bbp (S_{t}+\xi_{1}\geq{}y)-\bbp(\xi_{1}\geq{}y)\right\} +
    O(t^{1-\frac{1}{\alpha}}).
\end{align*}
Using the self-similarity of $S$, the second term on the right hand side converges to $\left(p(y^{-})- p(y^{+})\right)\bbe S_{1}^{+} $ and so, for $1<\alpha<2$, 
\[
	\lim_{t\rightarrow{}0} C(t) = \left(p(y^{-})- p(y^{+})\right)\bbe S_{1}^{+} \neq 0.
\]
\end{remk}

\section{Expansions via approximations by smooth functions}\label{ApprxmtSect}

The identity (\ref{MomentExpansionSmooth}) suggests the possibility
of achieving power expansions for
$\bbp\left(X_{t}\geq{}y\right)$ by approximating
$f(x)={\bf 1}_{\{x\geq{}y\}}$ using functions
$f_{m}$
in {\DRed $C_{b}^{2n+2}$}.
To this end, let us introduce mollifiers
$\varphi_{m}\in C^{\infty}$
with compact support contained in $[-1,1]$ that converges to the
Dirac delta function in the space of Schwartz distribution.
For concreteness,
we take $\varphi_{m}(x):=m \varphi(mx)$,
where $\varphi$ is a symmetric bump like function integrable to
$1$. Notice that
\begin{equation}\label{Mollifiers}
    f_{m}(x):=f*\varphi_{m}(x)
    =\int_{-\infty}^{x-y}\varphi_{m}(u)du,
\end{equation}
converges to $f(x)$, for any $x\neq{}y$.
Clearly, applying (\ref{MomentExpansionSmooth}) to each $f_{m}$,
\begin{equation}\label{MES2}
    \bbe f_{m}(X_{t})=
    \sum_{k=1}^{n} \frac{t^{k}}{k!}
    L^{k}f_{m}(0)+
    \frac{t^{n+1}}{n!}
    \int_{0}^{1} (1-\alpha)^{n}\bbe\left\{
    L^{n+1} f_{m}(X_{\alpha t})\right\}d\alpha,
\end{equation}
and by the dominated convergence theorem, 
\begin{equation}\label{LRH}
	\lim_{m\rightarrow{}\infty} \bbe f_{m}(X_{t})=\bbp(X_{t}\geq{}y).
\end{equation}
Thus,  the problem is to identify conditions
for the limit of each term on the right-hand side to converge as $m\rightarrow\infty$
and to identify the corresponding limiting value. 
The advantage of working with (\ref{MES2}) instead of the decomposition of $X$ of the previous section is that the coefficients $d_{k}$ of (\ref{GnrlExpansion0}) can be identified more explicitly.

As before, $c_{\varepsilon}\in C^{\infty}$ denotes a smooth truncation function such that 
\[
	{\bf 1}_{[-\varepsilon/2,\varepsilon/2]}(x)
	\leq c_{\varepsilon}(x) \leq  {\bf 1}_{[-\varepsilon,\varepsilon]}(x).
\]
The following operators will be useful in the sequel
\begin{align*}
    L_{i}g(x)&:= b_{i} g^{(i)}(x),\quad i=0,1,2\\
    L_{3}g(x)&:= \int g(x+u)\bar{c}_{\varepsilon}(u) \nu(du)\\
    L_{4}g(x)&:=\int \int_{0}^{1}
    g''(x+\beta w)(1-\beta) d\beta w^{2} c_{\varepsilon}(w) \nu(dw),
\end{align*}
where $\bar{c}_{\varepsilon}(u):= 1- c_{\varepsilon}(u)$, $b_{0}:=-\int \bar{c}_{\varepsilon}(u) \nu(du)$,
$b_{1}:=b-\int u ({\bf 1}_{\{|u|\leq{}1\}}-c_{\varepsilon}(u))
\nu(du)$ and $b_{2}:=\sigma^{2}/2$.
Note that 
\[
	Lg=\sum_{i=1}^{5}L_{i}g, 
\]
for any bounded  $g\in C^{2}_{b}$. Moreover, it turns out that the following 
commuting properties hold true 
{for any $g\in C^{2}_{b}$}:  
\[
    L_{i}L_{j}g=L_{j}L_{i}g.
\]

{
\begin{remk}\label{SmplrDcmp}
	Under additional assumptions on the L\'evy triplet  
$(\sigma^{2},b,\nu)$, we can choose more parsimonious 
decompositions of the infinitesimal generator. For instance, if one of the $b_{i}$'s is zero, 
then the corresponding operator is superfluous and can be omitted in the analysis below.  
Also, if $\int_{|w|\leq{}1} |w|\nu(dw) < +\infty$ (in which case the L\'evy process has bounded variation), 
then $L_{4}$ can be defined as: 
	\begin{align*}
		 {\DRed  L_{4}g:=\int\left(g(x+w)-g(x)\right)c_{\varepsilon}(w)\nu(dw)
	    =\int \int_{0}^{1}
    		g'(x+\beta w) d\beta w c_{\varepsilon}(w) \nu(dw)},
	\end{align*}
provided that $b_{1}$ is adjusted accordingly. If $\nu(\bbr\backslash\{0\})<\infty$, 
$L_{4}$ can be omitted, provided that we define $L_3, b_0$ and $b_1$ via: 
$L_{3}g(x)=\int g(x+u)\nu(du)$, $b_{0}=\nu(\bbr\backslash\{0\})$ and $b_{1}=b-\int_{|u|\leq{}1}u\nu(du)$.
\end{remk}
}

Let us introduce some more notation. For ${\bf k}:=(k_{0},\dots,k_{4})$
with $k_{0},\dots,k_{4}\geq{}0$,
${\bf u}:=(u_{1},\dots,u_{k_{3}})$,
${\bf w}:=(w_{1},\dots,w_{k_{4}})$, and
${\bf \beta}:=(\beta_{1},\dots,\beta_{k_{3}})$,
define the \emph{finite} measure
\begin{align*}
    {\DRed d\pi_{_{\bf k}}^{\varepsilon}({\bf u},{\bf w},{\bf \beta})=
    \prod_{i=1}^{k_{3}}\bar{c}_{\varepsilon}(u_{i})\nu(du_{i})
    \prod_{j=1}^{k_{4}}c_{\varepsilon}(w_{j})w_{j}^{2}\nu(dw_{j})
    \prod_{j=1}^{k_{4}}(1-\beta_{j})d\beta_{j}},
\end{align*}
on the space
$\widetilde{E}_{\bf k}:=\bbr^{k_{3}+k_{4}}\times [0,1]^{k_{4}}$.
Consider also the following related finite measure
\begin{align*}
    {\DRed d\tilde{\pi}^{\varepsilon}_{_{\bf k}}(u_{2},\dots,u_{k_{3}},{\bf w},{\bf \beta})=
    \prod_{i=2}^{k_{3}}\bar{c}_{\varepsilon}(u_{i})\nu(du_{i})
    \prod_{j=1}^{k_{4}}c_{\varepsilon}(w_{j})w_{j}^{2}\nu(dw_{j})
    \prod_{j=1}^{k_{4}}(1-\beta_{j})d\beta_{j}}.
\end{align*}
We sometimes drop the subscript ${\bf k}$ {\DRed and superscript $\varepsilon$} in the
measures defined above.
Also, the integral of a function
$g$ with respect to a
measure $\pi_{_{\bf k}}^{\varepsilon}$ is denoted by $\pi_{_{\bf k}}^{\varepsilon}(g)$
and we assume, by convention, that 
$\pi^{\varepsilon}_{_{\bf k}}(g)=g$, when $k_{3}=k_{4}=0$.
Similarly,
$\tilde\pi^{\varepsilon}_{_{\bf k}}(g)=g$
if $k_{4}=0$ and $k_{3}=1$ or $0$.

Let ${\bf \mathcal{K}}_{k}$ be the class of all ${\bf k}=(k_{0},\dots,k_{4})$ is such that
$k_{i}\geq{}0$ and $k_{0}+\dots+k_{4}=k$.
Note that, for any $k\geq{}1$,
\begin{equation}\label{DcmL2}
	L^{k}g (x)=\sum_{{\bf k}\in\mathcal{K}_{k}} b_{0}^{k_{0}}b_{1}^{k_{1}}b_{2}^{k_{2}}
        \binom{k}{\bf k}  B_{{\bf k}} g (x),
\end{equation} 
where $\binom{k}{\bf k}=k!/(k_{0}!\dots k_{4}!)$ is the multinomial coefficient
and
\begin{align*}
    B_{{\bf k}} g(x)
    :=
    \int g^{(k_{1}+2k_{2}+2k_{4})}
    \left(x+\displaystyle{\sum_{i=1}^{k_{3}}}u_{i}+
        \displaystyle{\sum_{j=1}^{k_{4}}}\beta_{j}w_{j}
    \right)d\pi_{_{\bf k}}.
\end{align*}

We first show that all terms in the right-hand side of (\ref{MES2}) converges. 
\begin{prop}\label{ExpnsSmoothThrm}
 Let $y>0$,  $n\geq 1$, and $0 < \varepsilon<y/(n+1)\wedge{}1$.
    Assume that $\nu$ has a  density $s$ such that  for any $0\leq k\leq{} 2n+1$ and any $\delta>0$,
    \begin{equation}\label{StndgCond0}
        a_{k,\delta}:=\sup_{|x|>\delta}|s^{(k)}(x)|<\infty.
    \end{equation}
    Then, for any $1\leq k\leq n$, 
    \begin{equation}\label{CffcntsSeries}
        \hat{d}_{k}(y):=\lim_{m\rightarrow\infty} L^{k}f_{m}(0)=
        \sum_{{\bf k}\in\mathcal{K}_{k}}
        \hat{c}_{\bf k}\binom{k}{\bf k}
        a_{\bf k},
    \end{equation}
    where, for ${\bf k}=(k_{0},\dots,k_{4})$ and $\ell_{\bf k}:=k_{1}+2k_{2}+2k_{4} $, 
    $\hat{c}_{\bf k}$ and $a_{\bf k}:=a_{\bf k}(y)$ are given by
    \begin{align}
    \hat{c}_{\bf k}&:=
        b_{0}^{k_{0}}b_{1}^{k_{1}}b_{2}^{k_{2}}
        {(-1)^{(k_{1}-1){\bf 1}\{\ell_{\bf k}>0\}}}\nonumber \\
    \label{ForCffcnts}
    a_{\bf k}&:=
    \left\{\begin{array}{ll}
    \int(\bar{c}_{\varepsilon}s)^{(\ell_{\bf k}-1)}\left(y
    -\displaystyle{\sum_{i=2}^{k_{3}}}u_{i}-
    \displaystyle{\sum_{j=1}^{k_{4}}}\beta_{j}w_{j}\right)\,
    d\tilde{\pi}_{_{\bf k}},& k_{3}>0,\,\,\ell_{\bf k}>0,\\
    \int {\bf 1}{\{\displaystyle{
    \sum_{i=1}^{k_{3}}}u_{i}
    \geq y\}}\,
    d\pi_{_{\bf k}}, & k_{3}>0, \ell_{\bf k}=0,\\
    0,& \text{otherwise}.\end{array}
    \right.
\end{align}
In particular, the limit 
   \begin{equation}\label{Rmdr}
        \mathcal{R}_{n}(t,y):=\lim_{m\rightarrow\infty}
        \int_{0}^{1}(1-\alpha)^{n}
        \bbe L^{n+1}f_{m}(X_{\alpha t})d\alpha,
    \end{equation}
    exists, and moreover, 
\begin{equation}\label{EqUs2}
    \bbp(X_{t}\geq y)=
    \sum_{k=1}^{n} \hat{d}_{k}(y)\,\frac{t^{k}}{k!}+
    \frac{t^{n+1}}{n!} \mathcal{R}_{n}(t,y).
\end{equation}
\end{prop}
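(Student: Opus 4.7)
The plan is to apply the identity (\ref{MES2}) to each mollified approximant $f_m$ and take the limit $m\to\infty$ term by term, using (\ref{LRH}) to identify the left-hand side. Since each $f_m$ lies in $C^\infty_b$, (\ref{MES2}) holds verbatim for $f_m$, and $f_m(0)=0$ as soon as $1/m<y$. The problem thus reduces to computing $\lim_{m\to\infty} L^k f_m(0)$ for each $1\le k\le n$ and then recovering the remainder by subtraction.

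Expanding $L^k f_m(0)$ via the commuting decomposition (\ref{DcmL2}), it suffices to analyze $\lim_{m\to\infty} B_{\bf k} f_m(0)$ for each ${\bf k}\in\mathcal{K}_k$. Four subcases arise. If $\ell_{\bf k}=0$ and $k_3>0$, then $B_{\bf k}f_m(0)=\int f_m(\sum_{i=1}^{k_3} u_i)\,d\pi_{\bf k}$, and dominated convergence against the finite measure $\prod_i\bar c_\varepsilon(u_i)\nu(du_i)$ yields the indicator form of $a_{\bf k}$. If $\ell_{\bf k}=0$ and $k_3=0$ then necessarily $k_0=k$, and $B_{\bf k}f_m(0)=f_m(0)\to 0$. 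If $\ell_{\bf k}>0$ and $k_3=0$, the constraint $\varepsilon<y/(n+1)$ forces $|\sum_j\beta_j w_j|\le n\varepsilon<y$, keeping the argument of $\varphi_m^{(\ell_{\bf k}-1)}$ bounded away from zero, so this term vanishes once $1/m$ is small enough. The only substantive case is $\ell_{\bf k}>0$ and $k_3>0$: I would single out the $u_1$-integration, substitute $v=u_1+\sum_{i=2}^{k_3}u_i+\sum_j\beta_j w_j-y$, and integrate by parts $\ell_{\bf k}-1$ times to move derivatives from $\varphi_m^{(\ell_{\bf k}-1)}$ onto $\bar c_\varepsilon(\cdot)s(\cdot)$; no boundary terms survive since $\varphi_m$ is compactly supported. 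The hypothesis (\ref{StndgCond0}) ensures $(\bar c_\varepsilon s)^{(\ell_{\bf k}-1)}$ is bounded, and $d\tilde\pi_{\bf k}$ is a finite measure, so passing $m\to\infty$ produces $(-1)^{\ell_{\bf k}-1}\int (\bar c_\varepsilon s)^{(\ell_{\bf k}-1)}(y-\sum_{i=2}^{k_3}u_i-\sum_j\beta_j w_j)\,d\tilde\pi_{\bf k}$. Since $2(k_2+k_4)$ is even, $(-1)^{\ell_{\bf k}-1}=(-1)^{k_1-1}$, matching $\hat c_{\bf k} a_{\bf k}$; reassembling via (\ref{DcmL2}) and the multinomial weights recovers (\ref{CffcntsSeries}).

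The existence of $\mathcal R_n(t,y)$ is then free, by solving (\ref{MES2}) for the integral term:
\[
\int_0^1(1-\alpha)^n\bbe L^{n+1}f_m(X_{\alpha t})\,d\alpha=\frac{n!}{t^{n+1}}\left\{\bbe f_m(X_t)-f_m(0)-\sum_{k=1}^n\frac{t^k}{k!}L^k f_m(0)\right\}.
\]
By (\ref{LRH}) the first term on the right converges to $\bbp(X_t\ge y)$ and, by the previous step, the coefficient sum converges to $\sum_{k=1}^n \hat d_k(y)\,t^k/k!$; hence the right-hand side has a limit, which by definition is $\mathcal R_n(t,y)$, and equating the two sides yields (\ref{EqUs2}). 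I expect the main technical obstacle to be the integration-by-parts step in the substantive subcase: one must track the sign correctly through $\ell_{\bf k}-1$ iterations, confirm that no boundary terms survive at any intermediate stage of the multi-dimensional $d\pi_{\bf k}$ integration, and verify uniform domination so that the distributional limit $\varphi_m\to\delta_0$ may legitimately be exchanged with the outer integration against the finite measure $d\tilde\pi_{\bf k}$.
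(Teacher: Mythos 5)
Your proposal follows the same route as the paper's proof: apply \eqref{MES2} to each mollified $f_m$, expand $L^k f_m(0)$ via the commuting decomposition \eqref{DcmL2}, treat each $B_{\bf k}f_m(0)$ by cases on $(\ell_{\bf k},k_3)$ (dominated convergence for $\ell_{\bf k}=0$, vanishing for $k_3=0,\ell_{\bf k}>0$ since $|\sum_j\beta_jw_j|\le n\varepsilon<y$, and integration by parts in the $u_1$ variable for $k_3,\ell_{\bf k}>0$ with the sign $(-1)^{\ell_{\bf k}-1}=(-1)^{k_1-1}$), then recover $\mathcal R_n$ by isolating the integral term in \eqref{MES2} and invoking \eqref{LRH}. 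This matches the paper's argument in both structure and all essential steps.
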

\begin{proof}
We write $\hat{d}_{\bf k}$ for $\hat{d}_{\bf k}(y)$ and $\calR_{n}(t)$ for $\calR_{n}(t,y)$.
From (\ref{DcmL2}), it suffices to show that for any ${\bf k}=(k_{0},\dots,k_{4})\in\mathcal{K}_{k}$, 
\[
	\lim_{m\rightarrow\infty}B_{\bf k}f_{m}(x)  	   
    ={(-1)^{(k_{1}-1){\bf 1}\{\ell>0\}}} a_{\bf k},
\]
with $\ell:=k_{1}+2k_{2}+2k_{4}$
(recalling that by convention $\int g d\pi_{\bf k}=g$, if $k_{3}+k_{4}=0$).  
In case $\ell=0$, 
\[
	B_{\bf k}f_{m}(x) =\int f_{m}
    \left(x+\displaystyle{\sum_{i=1}^{k_{3}}}u_{i}
    \right)d\pi_{_{\bf k}}({\bf u}),
  \]
  which clearly converges to {\DRed
  \(
  	\int {\bf 1}{\{{
    	\sum_{i=1}^{k_{3}}}u_{i}\geq y\}}\,
    d\pi_{_{\bf k}},
   \)}
    since $f_{m}(x)\stackrel{m\rightarrow\infty}{\rightarrow} {\bf 1}_{x\geq{}y}$ for any $x\neq y$, 
    $|f_{m}|\leq{}1$, and $\pi_{\bf k}$ is a non-atomic finite measure. 
    
    Consider the case $k_{3},\ell>0$. 
    Writing $z=\sum_{i=2}^{k_{3}}u_{i}+
    \sum_{j=1}^{k_{4}}\beta_{j}w_{j}$ and integrating by parts,
\begin{align*}
    &\int f_{m}^{(\ell)}
    \left(x+u_{1}+z\right)
    \bar{c}_{\varepsilon}\cdot s(u_{1})du_{1}=\int
    \varphi_{m}^{(\ell-1)}
    \left(x+u_{1}+z-y\right)
    \bar{c}_{\varepsilon}\cdot s(u_{1})du_{1}\\
    &\quad=(-1)^{k_{1}-1}\int
    \varphi_{m}
    \left(x+u_{1}+z-y\right)
    (\bar{c}_{\varepsilon}s)^{(\ell-1)}(u_{1}) du_{1}\\
    &\quad\stackrel{m\rightarrow\infty}{\longrightarrow}
    (-1)^{k_{1}-1}
    (\bar{c}_{\varepsilon}s)^{(\ell-1)}(y-x-z),
\end{align*}
provided that $c\in C^{\ell-1}(\bbr\backslash\{0\})$.
Moreover,  we have that 
\[
    \left|\int f_{m}^{(\ell)}
    \left(x+u_{1}+z\right)
    \bar{c}\cdot s(u_{1})du_{1}\right|\leq
    2^{\ell-1} \max_{k\leq{}\ell-1}
    \sup_{|x|>\delta}
    |s^{(k)}(x)|<\infty.
\]
Thus, applying first Fubini's theorem and then 
the dominated convergence theorem give: 
\begin{align*}
 &\lim_{m\rightarrow\infty}
    B_{{\bf k}}f_{m}(x)
    = 
    (-1)^{k_{1}-1}\int
 (\bar{c}_{\varepsilon}s)^{(\ell-1)}(y- x
    -\sum_{i=2}^{k_{3}}u_{i}-
    \sum_{j=1}^{k_{4}}\beta_{j}w_{j})
    d\tilde{\pi}
\end{align*}
In case $k_{3}=0$ and $\ell>0$, 
\[
    B_{\bf k}f_{m}(0)=
    \int f_{m}^{\ell}\left(\sum_{j=1}^{k_{4}}
    \beta_{j}w_{j}\right) d\pi_{_{\bf k}}
    =\int \varphi_{m}^{\ell-1}\left(\sum_{j=1}^{k_{4}}
    \beta_{j}w_{j}-y\right) d\pi_{_{\bf k}}=0,
\]
for $m$ large enough since, by construction, $\varepsilon$ is
chosen so that $y-(n+1)\varepsilon>0$, and
$\beta_{j}w_{j}$ takes values in $[-\varepsilon,\varepsilon]$ 
on the support of $\pi_{_{\bf k}}$.  Then, the existence of (\ref{Rmdr}) and the identity 
(\ref{EqUs2}) follow from 
(\ref{MES2}) and (\ref{LRH}).
\end{proof}

Notice that we cannot yet conclude that $\hat{d}_{\bf k}:=\hat{d}_{\bf k}(y)$ are the same constants as the $d_{k}$s in equations (\ref{GnrlExpansion1}) and (\ref{Cnst1}),
since we have not shown that 
  \begin{equation}\label{Lmt}
  	\limsup_{t\rightarrow{}0}| \calR_{n}(t,y)|<\infty.
\end{equation}
Actually, in view of Theorem \ref{MainExpansion}, these two conditions are equivalent; namely, 
$d_{k}=\hat{d}_{k}$ for all $k\leq{}n$ if and only if (\ref{Lmt}) holds.
We show these facts in the following result.
\begin{thrm}
	Under the conditions of Theorem \ref{MainExpansion}, (\ref{Lmt}) holds and moreover, 
	$\hat{d}_{k}=d_{k}$ in (\ref{CffcntsSeries}), which are independent of $\varepsilon$ and given by
    \[
        \lim_{t\rightarrow{}0}
        \frac{1}{t^{n}}\left\{
        \bbp\left(X_{t}\geq{}y\right)-
        \sum_{k=1}^{n-1} \hat{d}_{k}\,\frac{t^{k}}{k!}
        \right\}=\frac{\hat{d}_{n}}{n!},
    \]	
    for $k\leq{}n$.
\end{thrm}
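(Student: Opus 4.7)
The plan is to exploit together the two expansions already available — (\ref{GnrlExpansion1}) from Theorem \ref{MainExpansion} and (\ref{EqUs2}) from Proposition \ref{ExpnsSmoothThrm} — and to proceed by induction on $n$. Subtracting them gives
\[
\sum_{k=1}^{n}(d_k-\hat d_k)\frac{t^{k}}{k!}=\frac{t^{n+1}}{n!}\mathcal R_n(t,y)+O_{\varepsilon,\underline y}(t^{n+1}),
\]
and because the left-hand side is a polynomial in $t$ of degree $\le n$, it equals $O(t^{n+1})$ if and only if it vanishes identically. This confirms the equivalence already flagged in the excerpt: $d_k=\hat d_k$ for all $k\le n$ is the same statement as (\ref{Lmt}). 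Once either has been established, the limit formula in the statement is an immediate reading of (\ref{GnrlExpansion1}), obtained by dividing by $t^n/n!$ after subtracting the lower-order partial sum.

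The base case $n=1$ is settled by direct inspection: $d_1=\nu([y,\infty))$ by Remark \ref{Simplify}(ii), and $\hat d_1=\nu([y,\infty))$ reads off (\ref{CffcntsSeries}) since the only multi-index in $\mathcal K_1$ with $a_{\mathbf k}\ne 0$ is $\mathbf k=(0,0,0,1,0)$, yielding $\hat d_1=\int_{y}^{\infty}\bar c_\varepsilon(u)s(u)du=\int_{y}^{\infty}s(u)du$ (recall $\varepsilon<\underline y/2$). For the inductive step with $n\ge 2$, assume $d_k=\hat d_k$ for $k<n$, so in particular $\mathcal R_{n-1}(t,y)=O(1)$. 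Comparing (\ref{GnrlExpansion1}) at level $n$ with (\ref{EqUs2}) at level $n-1$, after cancelling the common partial sum $\sum_{k<n}\hat d_k t^k/k!$ thanks to the induction hypothesis, forces $\mathcal R_{n-1}(t,y)=d_n/n+O(t)$. On the other hand, subtracting (\ref{EqUs2}) at level $n$ from its counterpart at level $n-1$ — both involve the same $\hat d_k$'s since they are defined level-independently by (\ref{CffcntsSeries}) — produces the recurrence $\mathcal R_{n-1}(t,y)=\hat d_n/n+(t/n)\mathcal R_n(t,y)$. Combining the two yields
\[
t\,\mathcal R_n(t,y)=(d_n-\hat d_n)+O(t)\qquad(t\to 0).
\]

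The main obstacle is to conclude $d_n=\hat d_n$ from this display; this will follow if one can show \emph{independently} that $\mathcal R_n(t,y)=O(1)$ as $t\to 0$, for then the displayed identity forces $t\,\mathcal R_n(t,y)\to 0$. The plan for this is to pass the $m\to\infty$ limit inside the integral in (\ref{Rmdr}) and to bound the resulting integrand via the decomposition (\ref{DcmL2}) of $L^{n+1}f_m$, arguing as in the proof of Proposition \ref{ExpnsSmoothThrm}: contributions indexed by $\mathbf k$ with $k_3>0$ are, after a single integration by parts in a $u_i$-variable, uniformly bounded in $x$ by a constant depending only on $a_{2n+1,\varepsilon/2}$, $\lambda_\varepsilon$ and $\int c_\varepsilon(w)w^2 s(w)dw$; contributions with $k_3=0$ and $\ell_{\mathbf k}>0$ collapse to zero in the limit $m\to\infty$ on the bulk event $\{|X_{\alpha t}|<\underline y-(n+1)\varepsilon\}$ (since the choice $\varepsilon<\underline y/(n+1)$ keeps $x+\sum\beta_j w_j$ bounded away from $y$ on this event), while the complementary event receives a negligible contribution thanks to the tail estimate (\ref{TailEstm}). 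This yields a bound on $|\bbe L^{n+1}f_m(X_{\alpha t})|$ uniform in $m$, in $\alpha\in[0,1]$, and in $t$ small, hence $\mathcal R_n(t,y)=O(1)$. Independence of $\varepsilon$ of the $d_k$'s, already noted in Remark \ref{Simplify}(iii), then transfers automatically to the $\hat d_k$'s.
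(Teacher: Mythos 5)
Your argument reduces, both directly and through the induction, to the single claim that $\mathcal{R}_n(t,y)=O(1)$ (or at least $t\,\mathcal{R}_n(t,y)\to 0$) as $t\to 0$, and you propose to get this from a bound on $\bbe L^{n+1}f_m(X_{\alpha t})$ \emph{uniform in $m$}. That bound does not hold for the multi-indices $\mathbf{k}$ with $k_3=0$ and $\ell_{\mathbf{k}}>0$. For such $\mathbf{k}$ one has $B_{\mathbf{k}}f_m(x)=\int f_m^{(\ell)}\bigl(x+\sum_j\beta_j w_j\bigr)\,d\pi_{\mathbf{k}}$ and $\|f_m^{(\ell)}\|_\infty=O(m^\ell)$. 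Your bulk/complement split does not repair this: on the complementary event the only available estimate is $|f_m^{(\ell)}|\le C m^\ell$ times a tail probability, and $m^\ell\cdot\bbp(|X_{\alpha t}|\ge c)\to\infty$ as $m\to\infty$ for any fixed $t>0$. Note also that (\ref{TailEstm}) controls $X^\varepsilon$, not the full process $X$ appearing in (\ref{MES2}); for $X$ itself the tail only decays at first order in $\alpha t$. (A smaller issue: on $\{|X_{\alpha t}|<\underline{y}-(n+1)\varepsilon\}$ you only obtain $X_{\alpha t}+\sum\beta_j w_j<\underline{y}\le y$, not bounded away from $y$; this is fixable by shrinking the event, but the complement problem remains.) This is precisely the obstruction that the paper isolates: controlling the $k_3=0$, $\ell>0$ terms is what motivates the extra hypothesis (\ref{StndgCond}) in Theorem \ref{RmdrConv}, and the theorem you are proving is specifically formulated to avoid that hypothesis, so it cannot be proved by a direct bound of this type.

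The paper's proof sidesteps bounding $L^{n+1}f_m$ altogether and goes in the opposite direction. It reruns the proof of Theorem \ref{MainExpansion} with $f_m$ in place of $\mathbf{1}_{[y,\infty)}$, obtaining $\bbe f_m(X_t)=\sum_{k\le n}d_{k,m}\,t^k/k!+O_\varepsilon(t^{n+1})$; since for fixed $m\in\bbn$ the function $f_m\in C_b^{2n+2}$ also makes the Taylor remainder in (\ref{MES2}) $O_m(t^{n+1})$, matching the two degree-$n$ polynomials gives $d_{k,m}=L^kf_m(0)$. It then shows $d_{k,m}\to d_k$ directly, by proving uniform convergence of $\hat f_{i,m}^{(p)}$ to $\hat f_i^{(p)}$, so $\hat d_k=\lim_m L^kf_m(0)=\lim_m d_{k,m}=d_k$; boundedness of $\mathcal{R}_n$ then comes for free from subtracting (\ref{GnrlExpansion1}) from (\ref{EqUs2}). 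In short: the paper proves $d_k=\hat d_k$ first and deduces $\mathcal{R}_n=O(1)$, whereas you attempt to prove $\mathcal{R}_n=O(1)$ first and deduce $d_k=\hat d_k$; the former is feasible under the stated hypotheses, the latter is not. Your induction also buys nothing here: if the uniform bound on $\bbe L^{n+1}f_m(X_{\alpha t})$ that you need actually held, then $d_k=\hat d_k$ for all $k\le n$ would follow at once from the two level-$n$ expansions, without any inductive scaffolding.
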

\begin{proof}
	Using a proof as in Theorem  \ref{MainExpansion}, we conclude that 
	\begin{align}\label{FExp2}
    \bbe f_{m}(X_{t})
    =e^{-\lambda_{\varepsilon}t}\sum_{j=1}^{n}c_{j,m}\, \frac{t^{j}}{j!}
    +O_{\varepsilon}(t^{n+1}),
\end{align}
where 
{\DRed
\(
	c_{j,m}:=   
    \sum_{i=1}^{j} \binom{j}{i} L_{\varepsilon}^{j-i}\hat{f}_{i,m}(0),
\)}
with $\hat{f}_{i,m}(x):=\int f_{m}(x+u) \bar{s}_{\varepsilon}^{*i}(u) du$.  As in Remark \ref{Simplify} (i), 
(\ref{FExp2}) leads to 
\begin{equation}\label{GnrlExpansion2}
       \bbe f_{m}(X_{t})=
        \sum_{k=1}^{n} d_{k,m}\,\frac{t^{k}}{k!}+ O_{\varepsilon}(t^{n+1}),
    \end{equation}
with 
\begin{equation}\label{Cnst2}
	d_{k,m}=\sum_{j=1}^{k}  \binom{k}{j} c_{j,m} (-\lambda_{\varepsilon})^{k-j}.
\end{equation}
Since the last term in (\ref{MES2}) is $O_{\varepsilon}(t^{n+1})$, we have 
\[
	d_{k,m}=L^{k}f_{m}(0).
\]
To show that $\hat{d}_{k}:=\lim_{m\rightarrow\infty} d_{k,m}$ is identical to $d_{k}$ 
of (\ref{GnrlExpansion1})-(\ref{Cnst1}), it suffices that $\lim_{m\rightarrow\infty}c_{j,m}=c_{j}$, or equivalently, that
\[
	\lim_{m\rightarrow\infty} L_{\varepsilon}^{k}\hat{f}_{i,m}(0)= L_{\varepsilon}^{k}\hat{f}_{i}(0),
\]
for all $k\geq{}0$ and $i\geq{}1$.
The case $k=0$ is clear. For $k\geq{}1$, from (\ref{DcmL}), we only need to have
\begin{align}\label{AuxE}
   \lim_{m\rightarrow\infty}\int \hat{f}_{i,m}^{(p)}
    \left(x+
        \displaystyle{\sum_{\ell=1}^{k}}\beta_{\ell}w_{\ell}
    \right)d\pi_{k}^{\varepsilon}=\int \hat{f}_{i}^{(p)}
    \left(x+
        \displaystyle{\sum_{\ell=1}^{k}}\beta_{\ell}w_{\ell}
    \right)d\pi_{k}^{\varepsilon},
\end{align}
for any $k\geq{}0$ and $p\geq{}1$. Since $\varphi_{m}$ is symmetric,
\[
	 \hat{f}_{i,m}(x)=\int_{y-x}^{\infty} \varphi_{m}* \bar{s}_{\varepsilon}^{*i} (u) du,
\]
and thus, $\hat{f}_{i,m}'(x)=\varphi_{m}* \bar{s}_{\varepsilon}^{*i} (y-x)$, which converges to 
$\hat{f}'_{i}(x)= \bar{s}_{\varepsilon}^{*i} (y-x)$, as $m\rightarrow\infty$, uniformly in $x$. Then, 
\[
	\hat{f}_{i,m}^{(p)}(x)=(-1)^{p-1}\varphi_{m}* \bar{s}_{\varepsilon}^{*(i-1)} * \bar{s}_{\varepsilon}^{(p-1)}(y-x),
\]
which converges  to 
\[
	(-1)^{p-1}\bar{s}_{\varepsilon}^{*(i-1)} * \bar{s}_{\varepsilon}^{(p-1)}(y-x) =\hat{f}_{i}^{(p)}(x),
\]
as $m\rightarrow\infty$, uniformly in $x$. Since $\hat{\pi}_{k}^{\varepsilon}$ is a finite measure, 
(\ref{AuxE}) holds. {\DRed We have just} proved that $\hat{d}_{k}=d_{k}$, for $k\leq n$, 
and by matching (\ref{GnrlExpansion1}) and (\ref{EqUs2}), it follows that
\(
	\calR_{n}(t) = O(1), 
\)
as $t\rightarrow{}0$. The last two statements of the result are easily proved by induction.
\end{proof}

\section{The remainder and expansions for the transition densities}\label{DnstyExp}
In this section we give a more explicit expression for the remainder $\calR_{n}$ in (\ref{Rmdr}), whose existence was proved in Theorem  \ref{ExpnsSmoothThrm}. In order to do this, we expand 
$L^{n+1}f_{m}(X_{\alpha t})$ using (\ref{DcmL2}) and show that the limit of the resulting terms exists. The hardest case to tackle corresponds to $k_{3}=0$ and $\ell>0$, where we will need to impose the following condition on the transition density $p_{t}$ of $X_{t}$:
\begin{equation}\label{StndgCond}
	 c_{k,\delta}:=\sup_{0<u\leq{}t_{0}}
        \sup_{|x|>\delta}|p_{u}^{(k)}(x)|<\infty,
\end{equation}
for any $\delta>0$ and for some $t_{0}>0$. 
Condition (\ref{StndgCond})  is reasonable 
since it is known that 
\begin{equation}\label{LmtDnstyUnif}
	\lim_{t\rightarrow{}0} \sup_{|x|>\varepsilon}\left|\frac{1}{t} p_{t}(x)-s(x)\right|=0,
\end{equation}
(see Proposition III.6 in \cite{Leandre}  and Corollary 1.1 in \cite{Ruschendorf}). We confess however that, in general, 
this condition might be hard to verify since
the transition densities $p_{t}$ of a L\'evy model 
are not explicitly given  in many cases.
Let us point out that, under certain conditions,  Picard \cite{Picard:1997} proves that 
\begin{equation}\label{EqUnfrmBndDrvt}
	\sup_{x}|p_{t}^{(k)}(x)|\leq{} t^{-(k+1)/\beta},
\end{equation}
where $\beta$ is the Blumenthal-Getoor index
of $X$.  The approach in \cite{Picard:1997} was built on earlier methods and results of L\'eandre \cite{Leandre}, who proves {(\ref{LmtDnstyUnif}) and (\ref{EqUnfrmBndDrvt}) for $k=0$} using Malliavin calculus. In view of (\ref{EqUnfrmBndDrvt}),  for values of $t$ away from $0$, the derivatives of $p_{t}$ are uniformly bounded, and condition (\ref{StndgCond})  is then related to the behavior of $p_{t}^{(k)}$ when $t\rightarrow{}0$.
In Sections \ref{SectSymmtLevy} and \ref{SectGnrlSymmtLevy}, we prove that the 
condition (\ref{StndgCond}) holds for symmetric stable 
L\'evy processes and other related processes, rising the hope to use similar methods in other cases.
As in the previous section, we take $y>0$, $n\geq{}1$, and $\varepsilon>0$ such that
	\begin{equation}\label{CndSpprt}
		0 < \varepsilon<y/(n+1)\wedge{}1.
	\end{equation}
\begin{thrm}\label{RmdrConv}
 Assume that $\nu$ has a  density $s$ such that  
 (\ref{StndgCond0}) holds for any $0\leq k\leq{} 2n+1$ and any $\delta>0$. Also, assume that there exists a $t_{0}>0$ such that for all $0<t<t_{0}$, 
 $X_{t}$ has a $C^{2n+1}$ density $p_{t}$ satisfying (\ref{StndgCond}) for any $0\leq k\leq{} 2n+1$ and any $\delta>0$. Then, the remainder 
    \[
        \mathcal{R}_{n}(t,y):=\lim_{m\rightarrow\infty}
        \int_{0}^{1}(1-\alpha)^{n}
        \bbe L^{n+1}f_{m}(X_{\alpha t})d\alpha,
    \]
    is given by
    \begin{equation}\label{Rmndr}
        \mathcal{R}_{n}(t,y)=
        \sum_{{\bf k}\in{\bf\mathcal{K}}_{n+1}}
        c_{\bf k}\binom{n+1}{\bf k}
        \int_{0}^{1}(1-\alpha)^{n}
        a_{\bf k}(t;\alpha,y)d\alpha,
    \end{equation}
    where, for ${\bf k}=(k_{0},\dots,k_{4})\in{\bf\mathcal{K}}_{n+1}$,
    $c_{\bf k}$, and $a_{\bf k}(t;\alpha)$ are defined via:
    \begin{align*}
        c_{\bf k}&:=
        b_{0}^{k_{0}}b_{1}^{k_{1}}b_{2}^{k_{2}}
        {(-1)^{(k_{1}-1){\bf 1}\{\ell>0\}}},\\
        a_{\bf k}(t;\alpha,y)&:=\left\{\begin{array}{ll}
        \int \bbp\left(X_{\alpha t}+
        \displaystyle{\sum_{i=1}^{k_{3}}}u_{i}
        \geq{}y \right)
        d \pi_{_{\bf k}},
        & \ell=0 \\
        \int \bbe (\bar{c}s)^{(\ell-1)}\left(y- X_{\alpha t}
        -\displaystyle{\sum_{i=2}^{k_{3}}}u_{i}-
        \displaystyle{\sum_{j=1}^{k_{4}}}\beta_{j}w_{j}
        \right) d\tilde{\pi}_{_{\bf k}},
        &k_{3}>0, \ell>0\\
        \int p_{\alpha t}^{(\ell-1)}\left(y-
        \displaystyle{\sum_{j=1}^{k_{4}}}\beta_{j}w_{j}\right)
        d\pi_{_{\bf k}},
        &k_{3}=0, \ell>0,
        \end{array}\right.
    \end{align*}
    with $\ell:=k_{1}+2k_{2}+2k_{4}$.  
\end{thrm}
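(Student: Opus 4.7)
The plan is to apply the decomposition (\ref{DcmL2}) to $L^{n+1}f_m$, write
\[
\bbe L^{n+1}f_m(X_{\alpha t})
=\sum_{{\bf k}\in\mathcal{K}_{n+1}}b_0^{k_0}b_1^{k_1}b_2^{k_2}\binom{n+1}{\bf k}\,\bbe B_{\bf k}f_m(X_{\alpha t}),
\]
and show, by dominated convergence, that each $\lim_{m\to\infty}\bbe B_{\bf k}f_m(X_{\alpha t})$ exists pointwise in $\alpha$ and equals $(-1)^{(k_1-1){\bf 1}\{\ell>0\}}\,a_{\bf k}(t;\alpha,y)$, after which a second application of dominated convergence lets the limit pass through the finite sum, the integral in $\alpha$, and the expectation. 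By Fubini (since $\pi_{\bf k}$ is finite),
\[
\bbe B_{\bf k}f_m(X_{\alpha t})
=\int\bbe f_m^{(\ell)}\!\left(X_{\alpha t}+\sum_{i=1}^{k_3}u_i+\sum_{j=1}^{k_4}\beta_j w_j\right)d\pi_{\bf k},
\]
so it is enough to analyze the inner expectation in three cases, paralleling the dichotomy of Proposition \ref{ExpnsSmoothThrm}.

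When $\ell=0$, one has $B_{\bf k}f_m(X_{\alpha t})=\int f_m(X_{\alpha t}+\sum u_i)\,d\pi_{\bf k}$; the bound $|f_m|\leq1$ and $f_m(x)\to{\bf 1}_{[y,\infty)}(x)$ for $x\neq y$ yield the first branch of $a_{\bf k}$ via bounded convergence. When $k_3>0$ and $\ell>0$, I would repeat verbatim the integration-by-parts argument in the proof of Proposition \ref{ExpnsSmoothThrm}, performed on the $u_1$ variable: $\ell-1$ integrations by parts transfer the derivatives from $\varphi_m$ onto $\bar{c}_\varepsilon s$, the resulting integrand is bounded uniformly in $m$ and $x$ by $2^{\ell-1}\max_{k\leq 2n+1}a_{k,\varepsilon/2}$, and dominated convergence (now taken also over the expectation in $X_{\alpha t}$) produces the second branch of $a_{\bf k}$, with sign $(-1)^{\ell-1}=(-1)^{k_1-1}$ since $2k_2+2k_4$ is even.

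The main obstacle is the third case, $k_3=0$ and $\ell>0$, in which no L\'evy-density variable is available to absorb the derivatives. Here I would exploit the transition density $p_{\alpha t}$ of $X_{\alpha t}$: writing $z:=\sum_{j=1}^{k_4}\beta_j w_j$,
\[
\bbe f_m^{(\ell)}(X_{\alpha t}+z)=\int\varphi_m^{(\ell-1)}(x+z-y)\,p_{\alpha t}(x)\,dx,
\]
and then integrate by parts $\ell-1$ times (permissible since $p_{\alpha t}\in C^{2n+1}$ and $\ell-1\leq2n+1$), moving the derivatives onto $p_{\alpha t}$. As $\varphi_m$ is an approximate identity and since $|z|\leq k_4\varepsilon\leq(n+1)\varepsilon<y$ by (\ref{CndSpprt}), the resulting convolution converges pointwise to $(-1)^{\ell-1}p_{\alpha t}^{(\ell-1)}(y-z)$, i.e.\ to the third branch of $a_{\bf k}$. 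For $m$ large the support of $\varphi_m(\cdot+z-y)$ keeps $x$ within a neighborhood of $y-z$ that is bounded away from $0$ by $\delta:=(y-(n+1)\varepsilon)/2>0$; hypothesis (\ref{StndgCond}) then yields the uniform bound $|p_{\alpha t}^{(\ell-1)}(x)|\leq c_{\ell-1,\delta}$ for $\alpha t<t_0$, which dominates the integrand uniformly in $m$, $\alpha$, ${\bf w}$ and ${\boldsymbol\beta}$ and legalizes dominated convergence across the finite measure $d\pi_{\bf k}$ and across $\alpha\in(0,1)$. Summing the three cases and matching signs via $\ell-1\equiv k_1-1\pmod 2$ identifies the limit with (\ref{Rmndr}); the existence of $\mathcal{R}_n(t,y)$ is already provided by Proposition \ref{ExpnsSmoothThrm}, so only its explicit form remains to be read off.
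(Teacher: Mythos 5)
Your proposal matches the paper's proof essentially step for step: the same decomposition via (\ref{DcmL2}) into multi-indices $\bf k$, the same three-way case split on $\ell$ and $k_3$, the same Fubini + integration-by-parts onto $\bar{c}_\varepsilon s$ when $k_3>0$, the same transfer of $\ell-1$ derivatives onto $p_{\alpha t}$ when $k_3=0$ (with (\ref{StndgCond}) and the choice $\delta=(y-(n+1)\varepsilon)/2$ supplying the dominating bound), and the same sign bookkeeping $(-1)^{\ell-1}=(-1)^{k_1-1}$. Nothing is missing; this is the paper's argument.
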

\begin{proof}
From (\ref{DcmL2}), it suffices to show that for any ${\bf k}=(k_{0},\dots,k_{4})\in\mathcal{K}_{k}$, 
\[
	\lim_{m\rightarrow{}\infty}
	\int_{0}^{1}(1-\alpha)^{n} \bbe 
    	B_{{\bf k}} f_{m}(X_{\alpha t}) d\alpha=
	{(-1)^{(k_{1}-1){\bf 1}\{\ell>0\}}} 
	\int_{0}^{1} (1-\alpha)^{n} a_{\bf k}(t;\alpha)d\alpha.
\]
Below, $T_{\bf k}g(x;\cdot)$ is the function defined on
$E_{\bf k}:=\bbr^{k_{3}+k_{4}}\times [0,1]^{k_{4}}$ via 
\begin{align*}
   & T_{{\bf k}}g(x;u_{1},\dots,u_{k_{3}},
    w_{1},\dots,w_{k_{4}},
    \beta_{1},\dots,\beta_{k_{4}})\\
    \quad&=g^{(k_{1}+2k_{2}+2k_{4})}
    \left(x+\sum_{i=1}^{k_{3}}u_{i}+
    \sum_{j=1}^{k_{4}}\beta_{j}w_{j}\right).
\end{align*}

We break our proof in different cases. 
Suppose first that $\ell:=k_{1}+2k_{2}+2k_{4}=0$.
Since $0\leq f_{m}\leq 1$, apply Fubini's theorem to get:
\[
    \bbe B_{{\bf k}}f_{m}(X_{\alpha t})
    = b_{0}^{k_{0}}b_{1}^{k_{1}}b_{2}^{k_{2}}
    \pi_{_{\bf k}}(\bbe T_{\bf k}f_{m}(X_{\alpha t},\cdot)).
\]
Since $X_{\alpha t}$ is a continuous random variable,
the dominated convergence theorem implies that
\begin{align*}
    \bbe f_{m}\left(X_{\alpha t}+\sum_{i=1}^{k_{3}}u_{i}\right)
    \,\stackrel{m\rightarrow\infty}{\longrightarrow}\,
    \bbp\left(X_{\alpha t}\geq{}y
    -\sum_{i=1}^{k_{3}}u_{i}\right).
\end{align*}
Again, by dominated convergence, 
{\DRed $\int_{0}^{1}(1-\alpha)^{n}\bbe B_{{\bf k}}f_{m}(X_{\alpha t})d\alpha$ converges to $\int_{0}^{1}(1-\alpha)^{n}
 \int \bbp\left(X_{\alpha t}\geq{}y
    -\sum_{i=1}^{k_{3}}u_{i}\right)
    d\pi
    \,d\alpha$}.
    
Next, we consider the case $\ell>0$ and $k_{3}=0$.
Again by Fubini's theorem,
\[
    \bbe B_{{\bf k}}f_{m}(X_{\alpha t})
    = 
    \pi(\bbe T_{\bf k}f_{m}(X_{\alpha t},\cdot)).
\]
Writing $z=
    \sum_{j=1}^{k_{4}}\beta_{j}w_{j}$,
integrating by parts, and changing variables,
we have
\begin{align*}
    \bbe f_{m}^{(\ell)}
    \left(X_{\alpha t}+z\right)
    &=\int \varphi_{m}^{(\ell-1)}
    \left(x+z-y\right) p_{\alpha t}(x) dx\\
    &=(-1)^{k_{1}-1}\int
    \varphi_{m}
    \left(x\right) p_{\alpha t}^{(\ell-1)}(x+y-z) dx,
\end{align*}
{\DRed which converges to $(-1)^{k_{1}-1} p_{\alpha t}^{(\ell-1)}(y-z)$ as 
$m\rightarrow\infty$},
if $p_{\alpha t}^{(\ell-1)}$ is continuous.
Moreover, under (\ref{StndgCond}) and
with the help of (\ref{CndSpprt}), for $m$ large enough,
\[
    \sup_{0<\alpha\leq 1}
    \sup_{\beta_{j},w_{j}}\left|\bbe f_{m}^{(\ell)}
    \left(X_{\alpha t}+\sum_{j}\beta_{j}w_{j}\right)\right|\leq
    \sup_{0<\alpha<1}\sup_{|x|>\delta}
    |p_{\alpha t}^{(\ell-1)}(x)|<\infty,
\]
taking $\delta:=(y-(n+1)\varepsilon)/2$.
Then, by dominated convergence: 
\begin{align*}
 &\lim_{m\rightarrow\infty}
    \int_{0}^{1}(1-\alpha)^{n}\bbe B_{{\bf k}}f_{m}(X_{\alpha t})d\alpha
    = \\
 &\quad(-1)^{k_{1}-1}
\int_{0}^{1}(1-\alpha)^{n}\int p_{\alpha t}^{(\ell-1)}(y-
    \sum_{j=1}^{k_{4}}\beta_{j}w_{j})
    d\pi
    (w_{1},\dots,w_{k_{4}},\beta_{1},\dots,\beta_{k_{4}})
    \,d\alpha.
\end{align*}
Note that the previous limiting value is uniformly bounded in $t$ and $y$
by
\[
    \frac{1}{n+1}\pi(\bbr^{2k_{4}})
    \sup_{0<u\leq{}t}\sup_{|x|>\delta}
    |p_{u}^{(\ell-1)}(x)|<\infty.
\]
The only remaining case to tackle is
when $\ell>0$ and $k_{3}>0$.
Writing $z=\sum_{i=2}^{k_{3}}u_{i}+
    \sum_{j=1}^{k_{4}}\beta_{j}w_{j}$, we have that
\begin{align*}
    &\int f_{m}^{(\ell)}
    \left(X_{\alpha t}+u_{1}+z\right)
    (\bar{c}s)(u_{1})du_{1}=\int
    \varphi_{m}^{(\ell-1)}
    \left(X_{\alpha t}+u_{1}+z-y\right)
    (\bar{c}s)(u_{1})du_{1}\\
    &\quad=(-1)^{k_{1}-1}\int
    \varphi_{m}
    \left(X_{\alpha t}+u_{1}+z-y\right)
    (\bar{c}s)^{(\ell-1)}(u_{1}) du_{1},
\end{align*}
{\DRed which converges to $(-1)^{k_{1}-1}
    (\bar{c}s)^{(\ell-1)}(y-X_{\alpha t}-z)$ as $m\rightarrow\infty$,}
provided that $c\in C^{\ell-1}(\bbr\backslash\{0\})$.
Moreover, under (\ref{StndgCond0}),
we have that 
\[
    \left|\int f_{m}^{(\ell)}
    \left(X_{\alpha t}+u_{1}+z\right)
    (\bar{c}s)(u_{1})du_{1}\right|\leq
    2^{\ell-1} \max_{k\leq{}\ell-1}
    \sup_{|x|>\varepsilon}
    |s^{(k)}(x)|<\infty.
\]
Thus, applying first Fubini's theorem and then 
the dominated convergence theorem give: 
\begin{align*}
 &\lim_{m\rightarrow\infty}
    \int_{0}^{1}(1-\alpha)^{n}\bbe B_{{\bf k}}f_{m}(X_{\alpha t})d\alpha
    = \\
 &
    (-1)^{k_{1}-1}\int (1-\alpha)^{n}\int
 \bbe (\bar{c}s)^{(\ell-1)}(y- X_{\alpha t}
    -\sum_{i=2}^{k_{3}}u_{i}-
    \sum_{j=1}^{k_{4}}\beta_{j}w_{j})
    d\tilde{\pi}\, d\alpha.
\end{align*}
This last case achieves the proof of the theorem. \end{proof}

\begin{remk}
	From the proof is clear that 
    \[
        \left|\mathcal{R}_{n}(t,y)\right|\leq
        \frac{\alpha_{n}}{n+1}
        \,\max\left\{\max_{k\leq 2n+1}c_{k,\varepsilon},
        \max_{k\leq{}2n+1}a_{k,\varepsilon},1\right\},
    \]
    for any $t\in (0,t_{0})$,
    where
    \[
        \alpha_{n}:=\sum_{{\bf k}\in{\bf\mathcal{K}}_{n+1}}
        |c_{\bf k}|\binom{n+1}{\bf k}
        \max\{\pi_{_{\bf k}}(1),\tilde\pi_{_{\bf k}}(1),
        \hat\pi_{_{\bf k}}(1)\}.
    \]
    This bound on $\calR(t,y)$ is valid for any $y\geq \underline{y}$, taking also $\varepsilon$ such that  $0<\varepsilon<\underline{y}/(n+1)\wedge 1$.
\end{remk}

One of the advantages of an explicit expression for the remainder $\calR_{n}(t,y)$ of (\ref{EqUs2}) is that we can obtain small time expansions in $t$ for the transition density $p_{t}(y)$ of $X_{t}$, 
by a formal differentiation of (\ref{EqUs2}). With this application in mind we need to show that the coefficients $a_{\bf k}(y):=a_{\bf k}$ of (\ref{ForCffcnts}) and 
\[
	A_{\bf k}(t,y):=\int_{0}^{1}(1-\alpha)^{n}a_{\bf k}(t;\alpha,y)d\alpha,
\]
of (\ref{Rmndr}) are differentiable in $y$. The following result corrects Theorem 1 from \cite{Ruschendorf}.
\begin{prop}\label{ExpnsDnstyThrm}
 Let $y>0$,  $n\geq 1$, and $0 < \varepsilon<y/(n+1)\wedge{}1$.
 Assume that $\nu$ has a  density $s$ such that  
 (\ref{StndgCond0}) holds for any $0\leq k\leq{} 2n+2$ and any $\delta>0$. Also, assume that there exists a $t_{0}>0$ such that for all $0<t<t_{0}$, 
 $X_{t}$ has a $C^{2n+2}$ density $p_{t}$ satisfying (\ref{StndgCond})  for any $0\leq k\leq{} 2n+2$ and any $\delta>0$. Then, 
 \begin{equation}\label{EqUs2b}
    p_{t}(y)=
    \sum_{k=1}^{n} \hat{d}_{k}'(y)\,\frac{t^{k}}{k!}+ O_{\varepsilon}(t^{n+1}),
\end{equation}
where for any $1\leq k\leq n$, 
    \begin{equation}\label{CffcntsSeriesb}
        \hat{d}'_{k}(y):=
        \sum_{{\bf k}\in\mathcal{K}_{k}}
        \hat{c}_{\bf k}\binom{k}{\bf k}
        a'_{\bf k}(y),
    \end{equation}
    and, for ${\bf k}=(k_{0},\dots,k_{4})$ and $\ell_{\bf k}:=k_{1}+2k_{2}+2k_{4}$,
    \begin{align}
    \hat{c}_{\bf k}&:=
        b_{0}^{k_{0}}b_{1}^{k_{1}}b_{2}^{k_{2}}
        {(-1)^{(k_{1}-1){\bf 1}\{\ell_{\bf k}>0\}}}\nonumber \\
    \label{ForCffcntsb}
    a'_{\bf k}(y)&:=
    \left\{\begin{array}{ll}
    \int(\bar{c}_{\varepsilon}s)^{(\ell_{\bf k})}\left(y
    -\displaystyle{\sum_{i=2}^{k_{3}}}u_{i}-
    \displaystyle{\sum_{j=1}^{k_{4}}}\beta_{j}w_{j}\right)\,
    d\tilde{\pi}_{_{\bf k}},& k_{3}>0,\,\,\ell_{\bf k}>0,\\
    \int (\bar{c}_{\varepsilon}s)\left(y-\displaystyle{\sum_{i=2}^{k_{3}}}u_{i}\right)\,
    d\tilde{\pi}_{_{\bf k}}, & k_{3}>0, \ell_{\bf k}=0,\\
    0,& \text{otherwise}.\end{array}
    \right.
\end{align}
\end{prop}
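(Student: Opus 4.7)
The plan is to adapt the mollifier strategy of Section \ref{ApprxmtSect} to approximations of the Dirac delta at $y$, which amounts to formally differentiating $(\ref{EqUs2})$ in $y$ via the identity $p_{t}(y)=-\partial_{y}\bbp(X_{t}\geq y)$. Define $\psi_{m}(x):=\varphi_{m}(x-y)$, so that $\psi_{m}\in C_{b}^{\infty}$ with $\psi_{m}(0)=0$ for $m>2/y$, and, by the continuity of $p_{t}$ assured by $(\ref{StndgCond})$, $\bbe\psi_{m}(X_{t})=\int p_{t}(x)\varphi_{m}(x-y)dx\to p_{t}(y)$ as $m\to\infty$. Applying $(\ref{MomentExpansionSmooth})$ to $\psi_{m}$ and decomposing $L^{k}$ via $(\ref{DcmL2})$, it suffices to identify $\lim_{m\to\infty}B_{\bf k}\psi_{m}(0)$ for each ${\bf k}\in\mathcal{K}_{k}$, $1\leq k\leq n$, and to control the analogous limit in the $(n+1)$-st remainder.

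The case analysis parallels the proof of Proposition \ref{ExpnsSmoothThrm}, but with one extra integration by parts available because $\psi_{m}$ is informally one derivative more singular than $f_{m}$. When $k_{3}=\ell_{\bf k}=0$, $B_{\bf k}\psi_{m}(0)=\psi_{m}(0)=0$. When $k_{3}=0$ and $\ell_{\bf k}>0$, the argument $\sum_{j}\beta_{j}w_{j}$ of $\psi_{m}^{(\ell)}$ lies in $[-n\varepsilon,n\varepsilon]\subset(-y,y)$, hence $\psi_{m}^{(\ell)}$ vanishes there for $m$ large, by the support condition on $\varphi_{m}$ together with $\varepsilon<y/(n+1)$. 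When $k_{3}>0$ and $\ell_{\bf k}=0$, the change of variables $u_{1}=v-\sum_{i=2}^{k_{3}}u_{i}$ followed by $m\to\infty$ evaluates the inner integral at $v=y$ and yields the expression in $(\ref{ForCffcntsb})$. Finally, for $k_{3},\ell_{\bf k}>0$, integrating by parts $\ell$ times on $u_{1}$ gives
\[
\int\varphi_{m}^{(\ell)}(u_{1}+z-y)\,\bar{s}_{\varepsilon}(u_{1})du_{1}
=(-1)^{\ell}\int\varphi_{m}(u_{1}+z-y)(\bar{c}_{\varepsilon}s)^{(\ell)}(u_{1})du_{1}
\longrightarrow(-1)^{\ell}(\bar{c}_{\varepsilon}s)^{(\ell)}(y-z),
\]
with $z:=\sum_{i=2}^{k_{3}}u_{i}+\sum_{j}\beta_{j}w_{j}$; since $\ell$ and $k_{1}$ have the same parity, this sign is absorbed into $\hat{c}_{\bf k}$, producing $(\ref{CffcntsSeriesb})$--$(\ref{ForCffcntsb})$.

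For the remainder, the case $k_{3}=0,\ \ell>0$ is no longer trivial, since $\psi_{m}^{(\ell)}$ is evaluated at $X_{\alpha t}+\sum_{j}\beta_{j}w_{j}$; integration by parts against $p_{\alpha t}$ instead gives $\bbe\varphi_{m}^{(\ell)}(X_{\alpha t}+z-y)\to(-1)^{\ell}p_{\alpha t}^{(\ell)}(y-z)$, and since $|y-z|\geq y-(n+1)\varepsilon>0$, the uniform bound $(\ref{StndgCond})$ (now invoked up to order $2n+2$) dominates the integrand and allows dominated convergence to be applied successively in $m$, in the measure $\pi_{\bf k}$, and in $\alpha$, producing the required $O(t^{n+1})$ bound. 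The main obstacle will be precisely this chain of interchanges of limits and integrals in the remainder, which is also what dictates the strengthened smoothness hypotheses: each $y$-differentiation costs one additional derivative of $\bar{c}_{\varepsilon}s$ or of $p_{\alpha t}$ in the integration-by-parts step, so the $C^{2n+1}$ assumptions of Theorem \ref{RmdrConv} must be promoted to $C^{2n+2}$.
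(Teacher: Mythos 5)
Your approach is genuinely different from the paper's. The paper obtains Proposition \ref{ExpnsDnstyThrm} by formally differentiating the already-established expansion (\ref{EqUs2}) together with the explicit remainder (\ref{Rmndr}) of Theorem \ref{RmdrConv} with respect to $y$, and then justifying, case by case on ${\bf k}$, that differentiation under the integral sign is permitted (dominated convergence via (\ref{StndgCond}) and (\ref{LmtDnstyUnif})). You instead redo the mollifier argument of Section \ref{ApprxmtSect} from scratch, replacing $f_m=\varphi_m*\mathbf{1}_{[y,\infty)}$ by $\psi_m:=\varphi_m(\cdot-y)$, an approximation of the Dirac mass at $y$ rather than of the step, so that $\bbe\psi_m(X_t)\to p_t(y)$, and then apply (\ref{MomentExpansionSmooth}) and (\ref{DcmL2}) directly to $\psi_m$. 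Both routes are valid and rely on the same hypotheses for the same reasons; yours is self-contained and makes transparent why the smoothness orders are raised from $2n+1$ to $2n+2$ (informally $\psi_m\approx f_m'$, so each case costs one additional derivative of $\bar c_\varepsilon s$ or of $p_{\alpha t}$ in the integration by parts), while the paper's route is shorter because it leverages the explicit remainder already computed in Theorem \ref{RmdrConv}.

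One point in your write-up needs repair. In the case $k_3>0$, $\ell_{\bf k}>0$, your $\ell$ integrations by parts produce the sign $(-1)^{\ell}=(-1)^{k_1}$, yet you assert this is ``absorbed into $\hat{c}_{\bf k}$, producing (\ref{CffcntsSeriesb})--(\ref{ForCffcntsb}).'' But $\hat{c}_{\bf k}$ as displayed in (\ref{ForCffcntsb}) carries the factor $(-1)^{k_1-1}$, not $(-1)^{k_1}$, so your computation does not literally reproduce the stated formula and you should resolve the discrepancy explicitly instead of waving at it. It is in fact worth flagging rather than hiding: recalling $p_t(y)=-\partial_y\bbp(X_t\geq y)$ and comparing $a_{\bf k}$ of (\ref{ForCffcnts}) with $a'_{\bf k}$ of (\ref{ForCffcntsb}), one finds that in the $k_3>0$, $\ell_{\bf k}=0$ case $a'_{\bf k}=-\partial_y a_{\bf k}$, while in the $k_3>0$, $\ell_{\bf k}>0$ case $a'_{\bf k}=+\partial_y a_{\bf k}$, so the two subcases cannot both be correct with the same $\hat{c}_{\bf k}$; the sign your direct derivation produces appears to be the consistent one, and you should say so, checking it against the first-order term $\hat d_1'(y)=(\bar c_\varepsilon s)(y)=s(y)$ where your formula and the paper's agree.
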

\begin{proof}
	In view of (\ref{EqUs2}) and (\ref{Rmndr}), we {\DRed first} have to {\DRed show} that the derivative of (\ref{ForCffcnts}), for each case, exists and is given by (\ref{ForCffcntsb}). This will follow from the fact that $\tilde{\pi}_{\bf k}$ and $\pi_{\bf k}$ are finite measures and the integrands have continuous uniformly bounded derivatives with respect to $y$. 
Also, we need to show that 
the derivatives, with respect to $y$, of $a_{\bf k}(t;\alpha,y)$ exist and are continuous as well as  bounded {\DRed for $t$ small}. When $\ell=0$ and $k_{3}=0$, 
\[
	A_{\bf k}(t,y)=\int_{0}^{1} (1-\alpha)^{n} a_{\bf k}(t;\alpha,y)=  
	\int_{0}^{1} (1-\alpha)^{n}\int_{y}^{\infty}p_{\alpha t} (z) dz d\alpha.
\]
From (\ref{LmtDnstyUnif}), there exist $K>0$ and $t_{0}>0$ such that 
$\sup_{0<u<t_{0}}\sup_{|x|>\delta} p_{u}(x)<K$. Hence, one can interchange derivation and integration:
\begin{equation}\label{AE3}
	\frac{\partial A_{\bf k}(t,y)}{\partial y}=\int_{0}^{1} (1-\alpha)^{n}p_{\alpha t}(y)d\alpha,
\end{equation}
and moreover, the supremum of (\ref{AE3}) over $0<t<t_{0}$ is finite.
In case of $\ell=0$ and $k_{3}>0$, one can write
\[
	A_{\bf k}(t,y)=\int_{0}^{1} (1-\alpha)^{n} \int_{y}^{\infty}
	\int p_{\alpha t}\left(z-
        \displaystyle{\sum_{i=1}^{k_{3}}}u_{i}\right) d\pi_{\bf k}d z d\alpha.
\]
The inner integral is continuous in $z$ and is such that 
\[
	0\leq \bar{p}_{\alpha t}(z):=
	\int\dots \int  p_{\alpha t}\left(z-
        \displaystyle{\sum_{i=1}^{k_{3}}}u_{i}\right) (\bar{c}_{\varepsilon} s)(u_{1})du_{1}d\tilde\pi_{\bf k}\leq \sup_{u_{1}}\bar{s}_{\varepsilon}(u_{1}) \lambda_{\varepsilon}^{k_{3}-1}.
\]
Thus,  one can interchange derivation and integration to get:
\[
	\frac{\partial A_{\bf k}(t,y)}{\partial y}=\int_{0}^{1} (1-\alpha)^{n}\bar{p}_{\alpha t}(y)d\alpha,
\]
and moreover, the supremum over $t>0$ is finite. 
In  case $k_{3}>0$ and $\ell>0$, 
\[
	A_{\bf k}(t,y)= \int_{0}^{1}(1-\alpha)^{n}\int \bbe (\bar{c}s)^{(\ell-1)}\left(y- X_{\alpha t}
        -\displaystyle{\sum_{i=2}^{k_{3}}}u_{i}-
        \displaystyle{\sum_{j=1}^{k_{4}}}\beta_{j}w_{j}
        \right) d\tilde{\pi}_{_{\bf k}} d\alpha.
\]
Assuming that $\bar{c} s\in C^{\ell}_{b}$, one can interchange the derivative with respect to $y$ and the integral, and the resulting term will be uniformly bounded in $t$ since $\widetilde{\pi}_{\bf k}$ is a finite measure. 
Finally, when $k_{3}=0$ and $\ell>0$, 
\[
	A_{\bf k}(t,y)= \int_{0}^{1}(1-\alpha)^{n}\int p_{\alpha t}^{(\ell-1)}\left(y-
        \displaystyle{\sum_{j=1}^{k_{4}}}\beta_{j}w_{j}\right)
        d\pi_{_{\bf k}}d\alpha.
\]
Assuming that $p_{\alpha t}\in C^{\ell}_{b}$ and satisfies (\ref{StndgCond}) with $k=\ell$, one can interchange the derivative with respect to $y$ and the integral, and the resulting term will be uniformly bounded in $0<t<t_{0}$ since ${\pi}_{\bf k}$ is a finite measure. 
All previous cases will imply that 
\[	
	\frac{\partial A_{\bf k}(t,y)}{\partial y}
\]
exists and is uniformly bounded in $0<t<t_{0}$. Hence, the derivative of the last term in (\ref{EqUs2}) is $O_{\varepsilon}(t^{n+1})$. 
\end{proof}
}

\section{Symmetric stable L\'evy processes}\label{SectSymmtLevy}

In this section, we analyze the assumption
(\ref{StndgCond}), needed for the validity of Theorem \ref{RmdrConv} and \ref{ExpnsDnstyThrm},
in the case of symmetric stable  L\'evy processes. 

Let us assume that the L\'evy triplet $(\sigma^{2},b,\nu)$ 
is such that $b=0$ and that $\nu$ symmetric.  
Furthermore, let us assume that
    \begin{equation}\label{CndtnDnsty}
        \liminf_{\varepsilon\rightarrow{}0}
        \frac{\int_{[-\varepsilon,\varepsilon]} x^{2} \nu(dx)}
        {\varepsilon^{2-\alpha}}
        >0,
    \end{equation}
for $0<\alpha<2$.
{
Condition (\ref{CndtnDnsty}) is equivalent to
\[
	\lim_{\varepsilon\rightarrow0}
	\varepsilon^{\alpha}
	\int_{\{|x|>\varepsilon\}} \nu(dx)>0.
\]
}
Condition (\ref{CndtnDnsty}) is known to be sufficient for $X_{t}$ to have
a $C^{\infty}$-density $p_{t}$
(see e.g. \cite[Theorem I.1]{Leandre} or \cite[Proposition 28.3]{Sato}).  It will be useful to outline
the proof of this result.
The first step is to bound the characteristic function
$\psi_{t}(u)=\bbe e^{iuX_{t}}$
as follows:
\begin{equation}\label{BndChrcFnct}
    \left|\psi_{t}(u)\right|\leq e^{-c t |u|^{\alpha}},
\end{equation}
which is valid for $u$ large enough 
(cf. page 190 in \cite{Sato}).
Note that the right hand side of (\ref{BndChrcFnct}) is the characteristic 
function of a symmetric $\alpha$-stable L\'evy process. 
In particular,
\[
    \int \left|\psi_{t}(u)\right| |u|^{n} du<\infty,
\]
for any $n=0,\dots$, and 
the following inversion formula for $p_{t}^{(n)}$
holds:
\begin{equation}\label{GnrlInvForm}
    p_{t}^{(n)}(x)=\frac{(-i)^{n}}{2\pi}
    \int e^{-iux}u^{n}\psi_{t}(u) du,
\end{equation}
see \cite[Proposition 2.5]{Sato}.  Finally, the
Riemann-Lebesgue lemma implies that
\[
    \lim_{|x|\rightarrow\infty}p_{t}^{(n)}(x)=0.
\]

Let us try to modify the above argument for our purposes. 
In the case that $b=0$ and $\nu$ is symmetric, 
$\psi_{t}(u)$ is positive real and even, and thus,
{
\begin{equation}\label{SymmtInvForm}
    p_{t}^{(n)}(x)=
    \left\{
    \begin{array}{ll}
    \frac{(-1)^{n/2}}{\pi}\displaystyle{\int_{0}^{\infty}} \cos(ux)u^{n}\psi_{t}(u) du,& \text{if n is even,}\\
    \\
    \frac{(-1)^{(n+1)/2}}{\pi}\displaystyle{\int_{0}^{\infty}} \sin(ux)u^{n}\psi_{t}(u) du,& \text{if n is odd.}
    \end{array}
    \right.
\end{equation}
}
In light of (\ref{BndChrcFnct}), it is important to analyze
the case of a symmetric $\alpha-$stable L\'evy process. It is not surprising that
a great deal is known for this class (see e.g.
Section 14 in \cite{Sato}). For instance,
from the self-similarity property $X_{t}\ed t^{1/\alpha} X_{1}$,
\[
    p_{t}(x)= t^{-1/\alpha} p_{1}\left(t^{-1/\alpha}x\right).
\]
Asymptotic power series in  $x$ are available for $p_{1}(x)$,
from which one can also obtain the following
asymptotic behavior of
$p_{1}(x)$ when $x\rightarrow\infty$:
\begin{equation}\label{AsymptStable}
    p_{1}(x)\sim
    x^{-\alpha-1}.
\end{equation}
Note that (\ref{AsymptStable})
is consistent with the well-known asymptotic
result that
\[
    \lim_{t\rightarrow{}0}\frac{1}{t}\, p_{t}(x)=s(x)=x^{-\alpha-1},
\]
for any $x\neq 0$ (see e.g. \cite[Corollary 1]{Ruschendorf}).

We want to show that the condition (\ref{StndgCond}) holds for
symmetric stable distributions (and possibly for more general
symmetric distributions satisfying (\ref{CndtnDnsty})).
With this goal in mind, we give a method
to bound $x^{\alpha+1}p_{1}(x)$. First, we need the following
lemma:
{
\begin{lmma}\label{lmEsyEst}
    Let $\phi:(0,\infty)\rightarrow\bbr_{+}$
    be an integrable function. Then, the following statements hold:
    \begin{enumerate}
    	\item[(i)] If $\phi$ is monotone decreasing and 
	there exists $\beta\in[0,1]$ such that
    	\begin{equation}\label{RglrVar0Cond}
    		\limsup_{u\downarrow{}0} \phi(u)u^{\beta}<\infty, 
	\end{equation}
    then there exists a constant $c<\infty$, independent of $x$, such that 
    \begin{equation}\label{EqEsyEst1&2}
    		\left|\int_{0}^{\infty} \kappa(ux) \phi(u) du\right| \leq
		\frac{c}{x^{1-\beta}},
    \end{equation}
    where $\kappa$ can be either $\cos$ or $\sin$.
     
    \item[(ii)] If $\phi$ is unimodal with mode $u^{*}$, then
    \begin{equation}\label{EqEsyEst3}
    		\left|\int_{0}^{\infty} \kappa(ux) \phi(u) du\right| \leq
		\frac{\pi\phi(u^{*})}{x},
     \end{equation}
     for all $x>0$,
     where $\kappa$ can be either the function $\cos$ or $\sin$.
     Moreover, if $\phi$ is continuous, then  
     \begin{equation}\label{Limit}
    		\lim_{x\rightarrow\infty}
		x\int_{0}^{\infty} \kappa(ux) \phi(u) du= \pi \phi(u^{*}).
     \end{equation}
     \end{enumerate}
\end{lmma}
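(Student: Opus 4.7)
The unifying tool is Bonnet's second mean value theorem for integrals: if $\phi$ is monotone on $[a,b]$ and $g$ is integrable there, then there is $c\in[a,b]$ with $\int_a^b\phi(u)g(u)\,du=\phi(a^+)\int_a^c g(u)\,du$ in the monotone decreasing case, and the analogous identity with $\phi(b^-)\int_c^b g(u)\,du$ in the monotone increasing case. Combined with the elementary estimate $\left|\int_a^b\kappa(ux)\,du\right|\leq 2/x$, valid for all $0\leq a\leq b\leq\infty$, $x>0$, and both choices of $\kappa$, this delivers $1/x$ decay whenever $\phi$ is monotone.

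For part (i), my plan is to split the integral at $\delta:=1/x$ (assuming $x$ large so that $\delta\leq 1$). On $[0,\delta]$, the triangle inequality combined with the growth hypothesis $\phi(u)\leq Cu^{-\beta}$ near the origin yields
\[
\left|\int_0^\delta \phi(u)\kappa(ux)\,du\right|\leq \int_0^\delta \phi(u)\,du\leq \frac{C\,\delta^{1-\beta}}{1-\beta}=O(x^{\beta-1})
\]
for $\beta<1$; the case $\beta=1$ is already covered by integrability of $\phi$, since then the claimed bound is just $c$ independent of $x$. On $[\delta,\infty)$, Bonnet's theorem applied to the decreasing $\phi$ yields some $c\in[\delta,\infty]$ with
\[
\left|\int_\delta^\infty \phi(u)\kappa(ux)\,du\right|=\phi(\delta)\left|\int_\delta^c \kappa(ux)\,du\right|\leq \frac{2\,\phi(\delta)}{x}\leq \frac{2C}{x\,\delta^{\beta}}=O(x^{\beta-1}).
\]
Summing the two contributions gives the claimed bound for large $x$; for $x$ in any bounded range the asserted inequality is trivial by integrability of $\phi$ after adjusting the constant.

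For part (ii), I would split the integral at the mode $u^*$ and apply Bonnet to each piece. On $(0,u^*]$, where $\phi$ is increasing, there is $\xi\in[0,u^*]$ with $\int_0^{u^*}\phi(u)\kappa(ux)\,du=\phi(u^*)\int_\xi^{u^*}\kappa(ux)\,du$. On $[u^*,\infty)$, where $\phi$ is decreasing, there is $\eta\in[u^*,\infty]$ with $\int_{u^*}^\infty\phi(u)\kappa(ux)\,du=\phi(u^*)\int_{u^*}^\eta\kappa(ux)\,du$. Summing telescopes to $\phi(u^*)\int_\xi^\eta\kappa(ux)\,du$, whose modulus is bounded by $2\phi(u^*)/x$ and a fortiori by $\pi\phi(u^*)/x$, establishing the pointwise estimate.

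The limit identity is the most delicate step and I expect it to be the main obstacle. After the SMVT reduction, the quantity of interest equals $\phi(u^*)\cdot x\int_{\xi(x)}^{\eta(x)}\kappa(ux)\,du$, and one must track how $\xi(x),\eta(x)$ behave as $x\to\infty$. Using continuity of $\phi$ at $u^*$ together with the strict monotonicity of $\phi$ away from $u^*$, a refined argument should show that the contribution from the region outside a shrinking neighborhood of $u^*$ is negligible; after changing variables $v=ux$, the limit should reduce to a classical Dirichlet-type integral such as $\int_{-\infty}^\infty\sin(v)/v\,dv=\pi$, which is the natural source of the constant $\pi$.
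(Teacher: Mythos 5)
Your proofs of the two bounds \eqref{EqEsyEst1&2} and \eqref{EqEsyEst3} are correct, and they take a genuinely different (if closely related) route from the paper. Where the paper establishes the classical alternating-integral estimates
$0\le\int_0^\infty\sin(ux)\phi(u)\,du\le\int_0^{\pi/x}\phi(u)\,du$ and $\bigl|\int_0^\infty\cos(ux)\phi(u)\,du\bigr|\le\int_0^{3\pi/(2x)}\phi(u)\,du$
for decreasing $\phi$, then feeds in the hypothesis $\phi(u)\le c' u^{-\beta}$ near the origin, you split at $\delta=1/x$ and use Bonnet's second mean value theorem on the tail; the two approaches are essentially dual uses of the same monotonicity--oscillation cancellation. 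For part (ii), the paper constructs a zero $u(x)$ of $\kappa(x\cdot)$ within $2\pi/x$ of $u^*$ and sandwiches the integral between the two adjacent half-period pieces, picking up the factor $\pi$ from the half-period length; your SMVT telescoping on $(0,u^*]$ and $[u^*,\infty)$ is cleaner and in fact yields the sharper constant $2\phi(u^*)/x$ (note this requires $\phi(u^{*-})=\phi(u^{*+})$, i.e.\ continuity at the mode; otherwise the two SMVT constants differ and you only get $4\phi(u^*)/x$, still enough for a uniform $O(1/x)$ bound but not for $\pi\phi(u^*)/x$).

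For the limit \eqref{Limit} you are right to be suspicious, and I would push harder: it is in fact false as stated, and your own bound proves it. If $\phi$ is continuous and unimodal, your telescoping gives $\bigl|x\int_0^\infty\kappa(ux)\phi(u)\,du\bigr|\le 2\phi(u^*)<\pi\phi(u^*)$, which is incompatible with convergence to $\pi\phi(u^*)$. A concrete counterexample is the tent function $\phi(u)=(1-|u-1|)_+$ with $\kappa=\cos$: a direct computation gives $\int_0^2\cos(ux)\phi(u)\,du=2\cos(x)(1-\cos x)/x^2$, so $x\int_0^\infty\cos(ux)\phi(u)\,du\to 0$, not $\pi$. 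The paper's proof of \eqref{Limit} merely shows that the limsup and liminf of $x q(x)$ are trapped between $-\pi\phi(u^*)$ and $\pi\phi(u^*)$; it does not establish convergence, and the Dirichlet-integral intuition you were hoping to invoke does not apply because $\phi$ decays on both sides of $u^*$ rather than being locally constant. Fortunately, \eqref{Limit} is never invoked elsewhere in the paper --- only the bound \eqref{EqEsyEst3} is used in Propositions~\ref{PrpBndStbl}, \ref{PrpDrvBndStbl} and \ref{PropGnrlTSP} --- so the error is benign, but you should not spend effort trying to prove it.
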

\begin{proof}
To show (i), we first note the following two easy inequalities:
	\begin{align}\label{EqEsyEst1}
       		 0\leq \int_{0}^{\infty} \sin (ux)\phi(u)du&\leq{}
       	 	\int_{0}^{\frac{\pi}{x}}\phi(u)du<\infty,\\
        	\label{EqEsyEst2}
        	\left|\int_{0}^{\infty} \cos (ux)\phi(u)du\right|
        	&\leq{} \int_{0}^{\frac{3\pi}{2x}}\phi(u)du<\infty,
    \end{align} 
valid for any nonnegative function $\phi$ that is decreasing and integrable. Therefore, 
if $\kappa$ is either $\cos$ or $\sin$, then
\begin{equation*}
        \left|\int_{0}^{\infty} \kappa (ux)\phi(u)du\right|
        \leq{} \int_{0}^{\frac{2\pi}{x}}\phi(u)du.
\end{equation*} 
In view of the condition (\ref{RglrVar0Cond}), there exists a $c'>0$ and $x_{0}>0$ such that 
for all $x>x_{0}$, $\phi(u)\leq c' u^{-\beta}$, in $(0,2\pi/x]$, for any $x>x_{0}$. Then, 
(\ref{EqEsyEst1&2}) is clear for $x>x_{0}$.  The values 
$x\leq x_{0}$ can be taken care of easily since 
\[
	\int_{0}^{2\pi/x}\phi(u)du\uparrow \int_{0}^{\infty}\phi(u)du,
\]
when $x\searrow{}0$. 
    Let us now show (ii).  First, set 
    \[
    	q(x):=\int_{0}^{\infty}\kappa(ux)\phi(u)du.
	\] 
    By assumption,  $\phi$ 
    is increasing on $[0,u^{*}]$, and decreasing on $[u^{*},\infty)$. 
    It can be shown that for any $x>0$, 
    there exists a positive number $u(x)$ such that
    \begin{align}
    	 &\kappa(xu(x))=0 , \quad |u^{*}-u(x)|\leq{}\frac{2\pi}{x}, \quad{\rm and} \nonumber\\
	 \label{AuxEq5}
       &\int_{u(x)-\pi/x}^{u(x)}
        \kappa(ux)\phi(u)du
    	\leq q(x)\leq \int_{u(x)}^{u(x)+\pi/x}
        \kappa(ux)\phi(u)du,
    \end{align}
    (see e.g. Figure \ref{figureDefna} where the choice of $u(x)$ is illustrated 
    when $\kappa(u)=\cos(u)$). 
    Next, the upper and lower bounds on $q$ are such that:
    \begin{align*}
    	\int_{u(x)}^{u(x)+\pi/x}
        \kappa(ux)\phi(u)du\leq& \frac{\pi}{x}\phi(\bar{u}(x))\leq 
        \frac{\pi}{x} \phi(u^{*}),\\
        \int_{u(x)-\pi/x}^{u(x)}
        \kappa(ux)\phi(u)du\geq &
        -\frac{\pi}{x} \phi(\underline{u}(x))\geq
        -\frac{\pi}{x} \phi(u^{*}),
     \end{align*}
     where $\bar{u}(x)\in[u(x),u(x)+\pi/x]$ and $\underline{u}(x)\in[u(x)-\pi/x,u(x)]$.
     The inequality (\ref{EqEsyEst3}) is thus clear, while (\ref{Limit}) results from the fact that 
     both $\bar{u}(x)$ and $\underline{u}(x)$ converges to $u^{*}$ as $x\rightarrow\infty$.
\end{proof}
}

\begin{figure}[htb]
    {\par \centering
    \includegraphics[width=10cm,height=7cm]{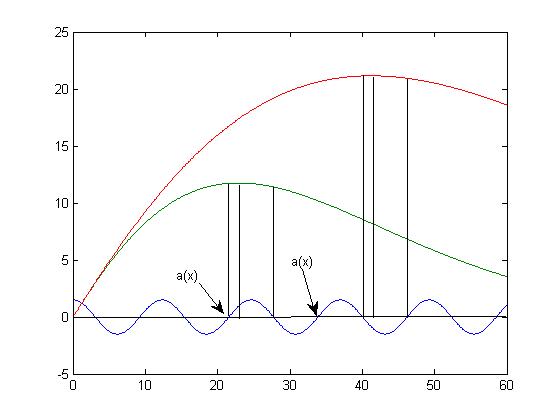}
    \par}
    \caption{\label{figureDefna}
    Definition of $a(x)$}
\end{figure}

\begin{prop}\label{PrpBndStbl}
    Let $X$ be a symmetric $\alpha$-stable L\'evy 
    process, and let $p_{t}$ be the density of the marginal $X_{t}$.
    {
    The following two statements hold:
    \begin{enumerate}
    \item[(a)] If $0<\alpha\leq 1$, then there exists an absolute  constant $c$ such that
    \[
        \sup_{x}|x|^{\alpha+1}p_{1}(x)\leq{}
        c.
    \]
    \item[(b)] If $1<\alpha\leq{}2$, then for any $\varepsilon>0$, there exists 
           a constant $0<c(\varepsilon)<\infty$ such that 
 \[
        \sup_{|x|>\varepsilon}|x|^{\alpha+1}p_{1}(x)\leq{}
        c(\varepsilon).
    \]   
    \end{enumerate}
    }
\end{prop}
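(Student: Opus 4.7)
The plan is to use the explicit form of the characteristic function of a symmetric $\alpha$-stable process, $\psi_1(u) = e^{-c|u|^{\alpha}}$ for some $c>0$, together with the inversion formula (\ref{SymmtInvForm}) for $n=0$:
\[
    p_1(x) = \frac{1}{\pi}\int_0^\infty \cos(ux)\, e^{-cu^\alpha}\,du, \qquad x\neq 0.
\]
The idea is to integrate by parts in $u$ so as to transfer powers of $x$ out of the integral, and then apply the two parts of Lemma~\ref{lmEsyEst} to the resulting oscillatory integrals.

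A first integration by parts (with vanishing boundary terms, since $\sin 0 = 0$ and $e^{-cu^\alpha}\to 0$ as $u\to\infty$) gives
\[
    p_1(x) = \frac{c\alpha}{\pi\, x}\int_0^\infty \sin(ux)\, u^{\alpha-1}e^{-cu^\alpha}\,du, \qquad x>0.
\]
For part (a), with $\alpha\in(0,1]$, the integrand $\phi(u) := u^{\alpha-1}e^{-cu^\alpha}$ is a product of two positive decreasing functions, hence decreasing, and $\limsup_{u\downarrow 0}\phi(u)\,u^{1-\alpha}=1<\infty$. Lemma~\ref{lmEsyEst}(i) applied with $\beta = 1-\alpha\in[0,1)$ then bounds the sine integral by $C/x^\alpha$, so $|x|^{\alpha+1}p_1(x)\le C$ for every $x\neq 0$; continuity and boundedness of $p_1$ near the origin extend this bound to all of $\bbr$.

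For part (b), with $\alpha\in(1,2]$, the previous display yields only $|p_1(x)|=O(|x|^{-2})$, which is short of the target. I would therefore integrate by parts once more. Since $\alpha>1$ forces $u^{\alpha-1}e^{-cu^\alpha}$ to vanish at the origin, the boundary term is again zero, and
\[
    p_1(x) = \frac{c\alpha}{\pi\, x^2}\int_0^\infty \cos(ux)\bigl[(\alpha-1)u^{\alpha-2} - c\alpha^2 u^{2\alpha-2}\bigr]e^{-cu^\alpha}\,du.
\]
The first piece has integrand $\phi_A(u) = u^{\alpha-2}e^{-cu^\alpha}$, decreasing because $\alpha-2<0$, and $\limsup_{u\downarrow 0}\phi_A(u)\,u^{2-\alpha}=1$; Lemma~\ref{lmEsyEst}(i) with $\beta=2-\alpha\in[0,1)$ bounds its cosine integral by $C/x^{\alpha-1}$. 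The second piece has integrand $\phi_B(u) = u^{2\alpha-2}e^{-cu^\alpha}$, which vanishes at $0$ and $\infty$ and is unimodal (one factor increases, the other decays rapidly); Lemma~\ref{lmEsyEst}(ii) bounds its cosine integral by $C'/x$. Combining and dividing by $x^2$ gives
\[
    |p_1(x)| \le \frac{C_1}{|x|^{\alpha+1}} + \frac{C_2}{|x|^3}, \qquad x\neq 0,
\]
and on $\{|x|>\varepsilon\}$ the $|x|^{-3}$ contribution is absorbed into the $|x|^{-(\alpha+1)}$ term with an $\varepsilon$-dependent constant. Together with the local boundedness of $p_1$ on any compact set away from the origin, this yields (b).

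The hardest step will be the case $1<\alpha<2$: after a single integration by parts the polynomial decay is only $|x|^{-2}$, whereas the target is $|x|^{-(\alpha+1)}$ with $\alpha+1>2$; iterating supplies the extra $x^{-1}$, but splits the integrand into two competing pieces, only the first of which remains monotone. This is precisely why Lemma~\ref{lmEsyEst} must provide both a monotone bound (i) \emph{and} a unimodal bound (ii); it also explains the $\varepsilon$-buffer in (b), since the unimodal piece contributes only $|x|^{-3}$ and must be traded back down to $|x|^{-(\alpha+1)}$ at the cost of a factor $\varepsilon^{-(2-\alpha)}$.
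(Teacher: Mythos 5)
Your proof is correct and follows essentially the same route as the paper: a single integration by parts plus Lemma~\ref{lmEsyEst}(i) for $\alpha\le 1$, and a second integration by parts splitting the integrand into a monotone piece (handled by Lemma~\ref{lmEsyEst}(i) with $\beta=2-\alpha$) and a unimodal piece (handled by Lemma~\ref{lmEsyEst}(ii), whence the $\varepsilon$-dependence) for $1<\alpha\le 2$. The only cosmetic difference is that the paper normalizes the scale constant to $c=1$.
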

\begin{proof}
    Without loss of generality suppose that $x>0$.
    By (\ref{SymmtInvForm}), the well-known representation of the characteristic function of $X_{t}$, and an 
    integration by parts,
    \begin{align}\label{AuxEq4}
        p_{1}(x)=
        \frac{1}{\pi}\displaystyle{\int_{0}^{\infty}}
        \cos(ux)e^{-u^{\alpha}} du
        =
        \frac{\alpha}{x}\int_{0}^{\infty}
        \sin(ux)u^{\alpha-1}e^{-u^{\alpha}} du.
    \end{align}
    If $0<\alpha\leq{}1$, then we can apply (\ref{EqEsyEst1&2}) with $\beta=1-\alpha$, and hence, 
    \begin{align*}
        \left|p_{1}(x)\right|&\leq \frac{\alpha}{x}\cdot \frac{c'}{x^{1-\beta}}= \frac{c}{x^{\alpha+1}},
    \end{align*}
    for a constant $c$.
    Now, let $1<\alpha\leq{}2$.  
    Applying another integration by parts in (\ref{AuxEq4}), 
    we have
    \begin{align}
        p_{1}(x)&=\frac{\alpha(\alpha-1)}{x^{2}}\int_{0}^{\infty}
        \cos(ux)u^{\alpha-2}e^{-u^{\alpha}} du \label{AuxEq5a}\\
        &\quad -\frac{\alpha^{2}}{x^{2}}\int_{0}^{\infty}
        \cos(ux)u^{2(\alpha-1)}e^{-u^{\alpha}} du.\label{AuxEq5}
    \end{align}
    The first term in the previous inequality can be bounded using  (\ref{EqEsyEst1&2}) 
    with $\beta=2-\alpha$:
    \begin{equation*}
    	\left|\frac{1}{x^{2}}\int_{0}^{\infty}
        \cos(ux)u^{\alpha-2}e^{-u^{\alpha}} du\right|
        \leq \frac{c}{x^{2}\cdot x^{1-\beta}}=\frac{c}{x^{\alpha+1}}.
    \end{equation*}
    The term in (\ref{AuxEq5}) can be bounded using  (\ref{EqEsyEst3}) since 
    $\phi(u)=u^{2(\alpha-1)}e^{-u^{\alpha}}$ is unimodal and thus, 
	\begin{align*}
		\left|\frac{\alpha^{2}}{x^{2}}\int_{0}^{\infty}
        \cos(ux)u^{2(\alpha-1)}e^{-u^{\alpha}} du\right|&\leq
        c\frac{1}{x^{3}}\leq{} c' \frac{1}{x^{1+\alpha}},
    \end{align*}
    for all $x>\varepsilon$, where $c,c'<\infty$ are constants depending only on $\varepsilon$. 
    Plugging in the above bounds in (\ref{AuxEq5}), we obtain the second statement in the proposition.    
\end{proof}

\begin{remk}
    In view of the above proposition, we obtain the
    following bound for the transition density $p_{t}$ of
    a symmetric $\alpha-$stable L\'evy process:
    \[
        p_{t}(x)\leq{} c \frac{t}{x^{\alpha+1}},
    \]
    valid for all $t>0$ and $|x|>\varepsilon$,
    and where $c$ is a constant depending only on $\varepsilon$.
\end{remk}

We can now generalize the ideas of Proposition \ref{PrpBndStbl}
to dealt with the derivatives of the transition density.
\begin{thrm}\label{PrpDrvBndStbl}
    Under the conditions of Proposition \ref{PrpBndStbl},
    for any $\varepsilon>0$, there exists a constant $c_{n}(\varepsilon)$
    such that
    \begin{equation}\label{EqDrvBndStbl0}
        \sup_{|x|>\varepsilon}
        |x|^{\alpha+1+n}\left|p_{1}^{(n)}(x)\right|
        \leq{}
        c_{n}(\varepsilon).
    \end{equation}
\end{thrm}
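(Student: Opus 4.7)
The plan is to generalize the argument of Proposition \ref{PrpBndStbl} to higher-order derivatives by performing additional integrations by parts in the inversion formula and then decomposing the resulting integrand into monotone or unimodal pieces, so that Lemma \ref{lmEsyEst} can be applied termwise. Starting from (\ref{SymmtInvForm}) applied with $\psi_1(u) = e^{-u^\alpha}$, for $x>0$ one writes
\[
    p_1^{(n)}(x) = \frac{\epsilon_n}{\pi}\int_0^\infty \kappa_n(ux)\, u^n\, e^{-u^\alpha}\, du,
\]
where $\epsilon_n\in\{\pm 1\}$ and $\kappa_n = \cos$ for $n$ even, $\kappa_n=\sin$ for $n$ odd. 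I would choose $m := n+1$ when $0<\alpha\le 1$ and $m := n+2$ when $1<\alpha<2$, so that $\beta := m-n-\alpha \in[0,1)$, and then perform $m$ successive integrations by parts to transfer derivatives from the oscillatory factor onto $\phi(u):=u^n e^{-u^\alpha}$. Each integration contributes a factor $1/x$; the boundary terms at $u=+\infty$ vanish by the decay of $e^{-u^\alpha}$, while those at $u=0$ also vanish, since $\phi^{(j)}(0)=0$ for all $j<n$, and for $j=n+1$ in the case $\alpha>1$, while at the critical step $j=n$ the running antiderivative $K_n$ of $\kappa_n$ equals $\pm\sin$ up to sign, giving $K_n(0)=0$ (a cancellation that hinges on correct alignment of the parity of $n$ with the number of integrations by parts performed). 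The outcome is an identity of the form
\[
    p_1^{(n)}(x) = \frac{C}{x^m}\int_0^\infty \tilde\kappa(ux)\,\phi^{(m)}(u)\,du,
\]
with $\tilde\kappa\in\{\pm\sin,\pm\cos\}$.

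Next, I would expand $\phi^{(m)}(u) = (d/du)^m[u^n e^{-u^\alpha}]$ using Leibniz's rule together with Fa\`a di Bruno's formula for the derivatives of $e^{-u^\alpha}$, writing
\[
    \phi^{(m)}(u) = \sum_{i} c_i\, u^{\gamma_i}\, e^{-u^\alpha},
\]
a finite sum whose exponents $\gamma_i$ lie in a finite subset of $\{\,n+k\alpha-m : k=0,1,2,\ldots\,\}$. By the choice of $m$, the smallest exponent is $\gamma_{\min} = \alpha-1 > -1$ (when $\alpha \leq 1$) or $\gamma_{\min} = \alpha-2 > -1$ (when $\alpha > 1$). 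For each term $g_i(u) := u^{\gamma_i}e^{-u^\alpha}$, the logarithmic derivative $\gamma_i/u - \alpha u^{\alpha-1}$ is strictly negative on $(0,\infty)$ when $\gamma_i\le 0$, so $g_i$ is monotone decreasing; when $\gamma_i>0$ the logarithmic derivative is strictly decreasing from $+\infty$ to a negative limit, so $g_i$ is unimodal. Applying Lemma \ref{lmEsyEst}(i) with $\beta=-\gamma_i\in[0,1]$ in the first case and Lemma \ref{lmEsyEst}(ii) in the second yields
\[
    \left|\int_0^\infty \tilde\kappa(ux)\, g_i(u)\, du\right|
    \le \frac{c_i'}{x^{(1+\gamma_i)\wedge 1}}.
\]
Summing over $i$, the dominant contribution corresponds to $\gamma_i = \gamma_{\min}$, giving
\[
    |p_1^{(n)}(x)| \le \frac{C}{x^m}\cdot\frac{c''}{x^{1+\gamma_{\min}}} = \frac{C'}{x^{n+\alpha+1}}
\]
for $x$ large enough, say $x\ge M$. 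On the compact annulus $\varepsilon\le |x|\le M$, continuity of $p_1^{(n)}$ (which follows from (\ref{GnrlInvForm}) together with (\ref{BndChrcFnct})) yields boundedness of $|x|^{n+\alpha+1}|p_1^{(n)}(x)|$, and the case $x<-\varepsilon$ is handled by the symmetry $p_1^{(n)}(-x) = (-1)^n p_1^{(n)}(x)$.

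The hardest part will be the careful bookkeeping of boundary terms at $u=0$ through the $m$ integrations by parts: at each step one must verify that either $\phi^{(j)}(0)$ vanishes or the running trigonometric antiderivative evaluated at $0$ vanishes, and these two cancellation mechanisms must align correctly with the parity of $n$, most delicately at step $j=n$ where $\phi^{(n)}(0)=n!$ forces the cancellation to come entirely from the trigonometric side. A secondary technicality is ensuring that the Fa\`a di Bruno expansion produces only exponents $\gamma_i>-1$ (so that the Lemma applies, and all integrals are absolutely convergent near $0$), and verifying that each $u^{\gamma_i}e^{-u^\alpha}$ is monotone or unimodal; the latter reduces to the monotonicity of $u\mapsto \gamma_i/u - \alpha u^{\alpha-1}$ on $(0,\infty)$.
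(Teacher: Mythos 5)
Your argument is correct, and it takes a cleaner, non-inductive route than the paper. The paper's proof is by induction on $n$: it peels off one integration by parts (its Eq.~(\ref{AuxEq1})) to produce a term of order $n-1$ handled by the inductive hypothesis, plus a sum $S(x)$ of oscillatory integrals with weights $u^{j\alpha}e^{-u^\alpha}$, and then treats $\alpha<1$, $\alpha=1$, and $1<\alpha\le 2$ via three distinct further manipulations — with $\alpha=1$ requiring a special self-referential trick. You instead perform all $m=n+1$ (or $n+2$) integrations by parts up front, expand $\phi^{(m)}=(u^n e^{-u^\alpha})^{(m)}$ via Leibniz and Fa\`a di Bruno, and observe that every resulting weight $u^{\gamma_i}e^{-u^\alpha}$ has $\gamma_i>-1$ and is either decreasing or unimodal, so Lemma \ref{lmEsyEst} applies termwise. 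This absorbs the $\alpha=1$ case into the $\alpha\le 1$ branch (where $\gamma_{\min}=0$ and Lemma \ref{lmEsyEst}(i) applies with $\beta=0$), avoiding the paper's separate treatment, and the Fa\`a di Bruno expansion makes the set of achievable exponents $\{k\alpha-1:k\ge 1\}$ (resp.\ $\{k\alpha-2:k\ge 1\}$) transparent, in particular that $k=0$ never occurs so $\gamma_{\min}>-1$. Two small points to tidy up: (a) the "critical" vanishing boundary term occurs at IBP step $j=n+1$, not $j=n$ — it is $K_{n+1}(0)\phi^{(n)}(0)$ that must vanish, and the parity check (antiderivatives of $\cos$ cycle as $\cos,\sin,-\cos,-\sin$, with $K_j(0)=0$ for odd $j$; similarly for $\sin$) shows $K_{n+1}=\pm\sin$ in both parity cases, exactly as you intend; (b) when $\gamma_i>0$ and $\alpha<1$ the logarithmic derivative $\gamma_i/u - \alpha u^{\alpha-1}$ is not strictly decreasing, but unimodality of $u^{\gamma_i}e^{-u^\alpha}$ still holds because that logarithmic derivative equals zero iff $\alpha u^\alpha = \gamma_i$, which has a unique root, and the sign is positive before and negative after. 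With those repairs the argument is complete and, in my view, somewhat more streamlined than the paper's induction.
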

\begin{proof}
    We prove the following more general bound:
    \begin{align}\label{EqDrvBndStbl}
        \sup_{|x|>\varepsilon}
        |x|^{\alpha+1+n}&\left|\displaystyle{\int_{0}^{\infty}}
        \kappa(ux)u^{n}e^{-u^{\alpha}} du\right|
        \leq{}
        d_{n}(\varepsilon)<\infty,
    \end{align}
    where $\kappa$ can be
    either $\cos$ or $\sin$.
    Without loss of generality, let us assume that $x>0$.
    Our proof is then performed by induction on $n$.
    Proposition \ref{PrpBndStbl} yields
    (\ref{EqDrvBndStbl}) for $n=0$ and $\kappa(x)=\cos(x)$.
    The case $\kappa(x)=\sin(x)$ can be dealt with in an analogous way; namely, we first integrate 
    by parts, once when $\alpha\leq{}1$, or twice when $1<\alpha\leq{}2$, and secondly, 
    we use (\ref{EqEsyEst1&2})  if $\alpha\leq{}1$, or 
   (\ref{EqEsyEst3}) if $1<\alpha\leq{}2$.

    Now, assume that (\ref{EqDrvBndStbl})
    holds for $n=0,\dots,m-1$. We want to prove the case
    $n=m>1$.  Set
     \begin{align*}
        q_{m}(x):=
        \displaystyle{\int_{0}^{\infty}}
        \kappa(ux)u^{m}e^{-u^{\alpha}} du.
    \end{align*}
    Applying consecutive integrations by parts,
    one can find constants $b_{j}$
    (depending only on $\alpha$ and $m$) such that
     \begin{equation}\label{AuxEq1}
        q_{m}(x)=
        -\frac{1}{x}\displaystyle{\int_{0}^{\infty}}
        \hat{\kappa}(ux)u^{m-1}e^{-u^{\alpha}} du
        +\frac{1}{x^{m}}
        \sum_{j=1}^{m} b_{j}
        \int_{0}^{\infty}
        \bar{\kappa}(ux)u^{i\alpha}e^{-u^{\alpha}} du,
    \end{equation}
    where $\hat{\kappa},\bar{\kappa}$ are either $\cos$ or $\sin$.
    By the induction hypothesis, the first term in (\ref{AuxEq1})
    is such that
    \begin{equation}\label{AuxEq2}
        \sup_{|x|>\varepsilon}
        \left|\frac{1}{x}\displaystyle{\int_{0}^{\infty}}
        \hat{\kappa}(ux)u^{m-1}e^{-u^{\alpha}} du
        \right|
        |x|^{\alpha+1+m}
        \leq{} d_{m-1}(\varepsilon),
    \end{equation}
    as we wanted to show.

 	\vspace{.2 cm}
    \noindent Now, for the second term, let us consider first $\alpha<1$. Let $k\geq{}1$ be such that
    \[
        \frac{k-1}{k-1+m}< \alpha \leq{}\frac{k}{k+m}.
    \]
    Also, for each $1\leq{}j\leq{}m+k$, let
    $1\leq r_{j}\leq j$ be such that
    \[
        \frac{r_{j}-1}{j}< \alpha \leq{}\frac{r_{j}}{j}.
    \]
    Setting $S(x):=\sum_{j=1}^{m} b_{j}
        \int_{0}^{\infty}
        \kappa(ux)u^{j\alpha}e^{-u^{\alpha}} du$, and 
    applying successive integrations by parts to each of
    the terms of $S(x)$, it follows that
     \begin{equation}\label{AuxEq10}
        S(x)=\sum_{j=1}^{m+k} {a_{j}}{x^{r_{j}}}
        \int_{0}^{\infty}
        \kappa_{j}(ux)u^{j\alpha-r_{j}}e^{-u^{\alpha}} du
    \end{equation}
    for some constants $a_{j}$, and where
    $\kappa_{j}$ is either $\cos$ or $\sin$.
    By the way $r_{j}$ is chosen, the inequality (\ref{EqEsyEst1&2}) can be applied 
    to estimate the absolute value of each term in (\ref{AuxEq10}). Then,
    \begin{align}
        |S(x)|=
        \sum_{j=1}^{m+k} \frac{a_{j}}{x^{r_{j}}}
        \left|\int_{0}^{\infty}
        \kappa_{j}(ux)u^{j\alpha-r_{j}}e^{-
        u^{\alpha}} du\right|\leq
        \sum_{j=1}^{m+k} \frac{\hat{a}_{j}}{x^{j\alpha +1}},
        \label{AuxEq3}
    \end{align}
    for some $\hat{a}_{j}\geq{}0$.
    Combining (\ref{AuxEq1})-(\ref{AuxEq3}),
    there exists a constant $c_{m}(\varepsilon)$ such that
    \begin{equation}\label{AuxEq2}
        \sup_{|x|>\varepsilon}
        \left|p_{1}^{(m)}(x)\right|
        |x|^{\alpha+1+m}
        \leq{} c_{m}(\varepsilon).
    \end{equation}

 	\vspace{.2 cm}
    \noindent  Next, we consider the case of $1<\alpha\leq 2$. 
     Note that, for some constants $a_{0},a_{1},a_{2}$ depending only on $\alpha$ and $j$, 
     the term 
     $C_{j}(x):=
     	 \int_{0}^{\infty}
        \kappa(ux)u^{j\alpha}e^{-u^{\alpha}} du$ can be broken into three pieces:
     \begin{align}
     	 C_{j}(x)&=\frac{a_{0}}{x^{2}}\int_{0}^{\infty}
        \kappa(ux)u^{j\alpha-2}e^{-u^{\alpha}} du\label{AuxEq6}\\
        &\quad+     	 
        \frac{a_{1}}{x^{2}}\int_{0}^{\infty}
        \kappa(ux)u^{(j+1)\alpha-2}e^{-u^{\alpha}} du+
             	 \frac{a_{2}}{x^{2}}\int_{0}^{\infty}
        \kappa(ux)u^{(j+2)\alpha-2}e^{-u^{\alpha}} du.\nonumber
      \end{align}
      If $j=1$, then the first term in (\ref{AuxEq6}) can be bounded using  (\ref{EqEsyEst1&2})
      with $\beta=2-\alpha$:
        \[
        	 \left|\frac{a_{0}}{x^{2}}\int_{0}^{\infty}
        	\kappa(ux)u^{\alpha-2}e^{-u^{\alpha}} du\right|
        	\leq{}\frac{c}{x^{2} x^{1-\beta}}=\frac{c}{x^{\alpha+1}},
        \]
        for some $c<\infty$. For the other two terms of the case $j=1$ or any other $2\leq{}j\leq{}m$,
        we can apply (\ref{EqEsyEst3}) since then the function multiplying $\kappa$ is unimodal. Then, 
        for any $\varepsilon>0$, we can bound $S(x):=\sum_{j=1}^{m} b_{j}
        \int_{0}^{\infty}
        \kappa(ux)u^{j\alpha}e^{-u^{\alpha}} du$ in the following way:
        \[
        	\sup_{x>\varepsilon}|S(x)|\leq{} 
		\frac{c}{x^{\alpha+1}}+\frac{c'}{x^{3}}\leq{} \frac{{c''}}{x^{\alpha+1}},
	\]
	for a constant ${c''}$ depending only on $\varepsilon$.
        
        Finally, let us verify the case $\alpha=1$.
    Without loss of generality, assume that $\kappa(x)=\cos(x)$.
    After two integrations by parts,
    we have that
     \begin{align*}
        \displaystyle{\int_{0}^{\infty}}
        \cos(ux)u^{m}e^{-u} du&=
        -\frac{m}{x}\displaystyle{\int_{0}^{\infty}}
        \sin(ux)u^{m-1}e^{-u} du+
        \frac{1}{x}\displaystyle{\int_{0}^{\infty}}
        \sin(ux)u^{m}e^{-u} du\\
        &=
        -\frac{m}{x}\displaystyle{\int_{0}^{\infty}}
        \sin(ux)u^{m-1}e^{-u} du-\frac{1}{x^{2}}\displaystyle{\int_{0}^{\infty}}
        \cos(ux)u^{m}e^{-u} du\\
        \quad&+
        \frac{m}{x^{2}}
        \displaystyle{\int_{0}^{\infty}}
        \cos(ux)u^{m-1}e^{-u} du.
    \end{align*}
    We can then write the above equality in the following manner:
    \begin{align*}
        \left(1+\frac{1}{x^{2}}\right)
        \displaystyle{\int_{0}^{\infty}}
        \cos(ux)u^{m}e^{-u} du
        &=-\frac{m}{x}\displaystyle{\int_{0}^{\infty}}
        \sin(ux)u^{m-1}e^{-u} du\\
        \quad&
        +\frac{m}{x^{2}}\displaystyle{\int_{0}^{\infty}}
        \cos(ux)u^{m-1}e^{-u} du.
    \end{align*}
    The result follows by applying our induction hypothesis to bound each
    of the two terms in the right-hand side of the last equality.
\end{proof}

\begin{cllry}
    With the notation of Proposition \ref{PrpDrvBndStbl},
    for any $0<\alpha\leq{}2$, $\varepsilon>0$, and
    $n\geq{}0$, there exist a constant
    $c_{n,\varepsilon}$ such that
    \begin{equation}\label{EqDrvBndStblt}
        \sup_{|x|>\varepsilon}
        \left|p_{t}^{(n)}(x)\right|
        \leq{}
        c_{n,\varepsilon} t,
    \end{equation}
    for any $0<t\leq{}1$.
\end{cllry}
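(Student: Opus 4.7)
The plan is to derive the bound for general $t \in (0,1]$ by pulling it back to the $t=1$ case via the self-similarity of the symmetric $\alpha$-stable L\'evy process. Recall that $X_t \stackrel{d}{=} t^{1/\alpha} X_1$, which yields the scaling identity $p_t(x) = t^{-1/\alpha} p_1(t^{-1/\alpha} x)$, and, differentiating $n$ times in $x$,
\[
p_t^{(n)}(x) = t^{-(n+1)/\alpha} p_1^{(n)}\!\left(t^{-1/\alpha} x\right).
\]
The differentiation under the integral sign is legitimate in view of the inversion formula (\ref{GnrlInvForm}) together with the Gaussian-type decay of the characteristic function, which guarantees the derivatives in question are smooth.

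Next, I would observe that for $0 < t \le 1$ one has $t^{-1/\alpha} \geq 1$, so that $|x| > \varepsilon$ implies $|t^{-1/\alpha} x| \geq |x| > \varepsilon$. Hence Theorem \ref{PrpDrvBndStbl} applies at the point $y = t^{-1/\alpha} x$, giving
\[
\left|p_1^{(n)}(t^{-1/\alpha} x)\right| \leq \frac{c_n(\varepsilon)}{|t^{-1/\alpha} x|^{\alpha+1+n}} = c_n(\varepsilon)\, \frac{t^{(\alpha+1+n)/\alpha}}{|x|^{\alpha+1+n}}.
\]
Substituting into the scaling identity and simplifying the exponent of $t$,
\[
\left|p_t^{(n)}(x)\right| \leq c_n(\varepsilon)\, \frac{t^{(\alpha+1+n)/\alpha - (n+1)/\alpha}}{|x|^{\alpha+1+n}} = c_n(\varepsilon)\, \frac{t}{|x|^{\alpha+1+n}}.
\]
Finally, bounding $|x|^{\alpha+1+n} > \varepsilon^{\alpha+1+n}$ on the region $|x| > \varepsilon$ yields the stated inequality with $c_{n,\varepsilon} := c_n(\varepsilon)\,\varepsilon^{-(\alpha+1+n)}$.

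There is essentially no obstacle here: the whole content of the corollary is an arithmetic manipulation of exponents once the self-similarity relation is invoked, and Theorem \ref{PrpDrvBndStbl} has already done all the analytic work. The only thing to verify carefully is that $t^{-1/\alpha} x$ remains in the range $\{|y|>\varepsilon\}$ on which Theorem \ref{PrpDrvBndStbl} gives the decay, and this is immediate from $t \leq 1$.
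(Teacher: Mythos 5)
Your proof is correct, and the scaling argument you use is precisely what the paper's remark after Proposition~\ref{PrpBndStbl} (the bound $p_t(x)\le ct/x^{\alpha+1}$) signals: pull back to $t=1$ via self-similarity, invoke Theorem~\ref{PrpDrvBndStbl}, and check that the $t$-exponents combine to $1$. The only small nit is your parenthetical about ``differentiation under the integral sign'' -- the identity $p_t^{(n)}(x)=t^{-(n+1)/\alpha}p_1^{(n)}(t^{-1/\alpha}x)$ follows directly from the chain rule applied to $p_t(x)=t^{-1/\alpha}p_1(t^{-1/\alpha}x)$, with no Fourier inversion needed (that machinery only enters in justifying smoothness of $p_1$, which Theorem~\ref{PrpDrvBndStbl} has already used).
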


{
\section{General L\'evy processes}\label{SectGnrlSymmtLevy}

In this part, we examine the validity of the assumption
(\ref{StndgCond}) for general L\'evy processes, whose L\'evy density $s$ is stable like around the origin. 

The main tool will be a recursive relations between the derivatives of a density $p$. Consider a distribution $\mu$ such that its characteristic function $\psi(u):=\hat{\mu}(u)$ is $C^{\infty}$ with also 
\begin{equation}\label{CndChrFnct}
	\int_{-\infty}^{\infty}|u|^{m}\left|\psi^{(r)}(u)\right|du<\infty,
\end{equation}
for all $r\geq 0$ and $m\geq{}0$. Recall that in that case $\mu$ admits a $C^{\infty}$-density $p$ and moreover,
\begin{equation}\label{InvFormb}
    p^{(m)}(x)=\frac{(-i)^{m}}{2\pi}
    \int e^{-iux}u^{m}\psi(u)du.
\end{equation}
By applying two consecutive integration by parts, we can derive the following formulas
\begin{align*}
	p^{(m)}(x)&=- \frac{m}{x} \, {p}^{(m-1)}- \frac{(-i)^{m-1}}{2\pi x} 
	\int e^{-iux} u^{m}\frac{d{\psi(u)}}{d u} du,\\
	{p}^{(m)}(x)&= -2\frac{m}{x} \, {p}^{(m-1)}-\frac{m(m-1)}{x^{2}} \, {p}^{(m-2)}
	+ \frac{(-i)^{m-2}}{2\pi x^{2}} 
	\int e^{-iux} u^{m}\frac{d^{2} {\psi(u)}}{d u^{2}}du,
\end{align*}
where we are assuming that $m\geq{}2$. However, even if $m<2$, we can deduce a recursive formula for ${p}^{(m)}$ in terms of all its lower order derivatives and the integral  of the function 
\[
	e^{-iux}u^{m}\frac{d^{r} \psi(u)}{d u^{r}}.
\]
Indeed, we have:
\begin{thrm}\label{RcrsFormThrm}
Let $r\geq 0$ and $m\geq{}0$.
Then, for all $x$, ${p}^{(m)}(x)$ can be written as
\begin{align*}
	&\sum_{j=1}^{r \wedge m} c_{r,j}^{m}\, \prod_{i=0}^{j-1}(m-i) \frac{1}{x^{j}} 
	\,{p}^{(m-j)}(x)+ (-1)^{r}
	 \frac{(-i)^{m-r}}{2\pi x^{r}} 
	\int e^{-iux} u^{m}\frac{d^{r}\psi(u)}{d u^{r}}du,
\end{align*}
where $c_{i,j}^{m}$ are given by the following recursive formulas:
\begin{align}\label{MaimRcrsvForm}
	c_{r,0}^{m}=-1,\quad  c_{r,j}^{m}=0,\; \; (j>r),\quad
	c_{r+1,j}^{m}&=c_{r,j}^{m}+c_{r,j-1}^{m-1}.
\end{align}
\end{thrm}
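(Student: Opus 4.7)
The plan is to induct on $r$, with the formula for each $r$ holding for all $m\geq 0$ simultaneously. The base case $r=0$ is exactly the inversion formula (\ref{InvFormb}), since the sum is empty and the convention $c_{0,0}^m=-1$ is consistent.

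For the inductive step $r\to r+1$, the key step is to rewrite the integral
\[
I_r^m(x):=\int e^{-iux} u^{m}\psi^{(r)}(u)\,du
\]
by integration by parts, using the identity $e^{-iux}=\frac{1}{-ix}\frac{d}{du}[e^{-iux}]$. Differentiating the product $u^m\psi^{(r)}(u)$ produces two terms, and the decay condition (\ref{CndChrFnct}) (applied to $\psi^{(r)}$ and $\psi^{(r+1)}$) kills the boundary terms, giving
\[
I_r^m(x)=\frac{1}{ix}\Bigl[m\,I_r^{m-1}(x)+I_{r+1}^m(x)\Bigr].
\]
Substituting this into the inductive hypothesis for $p^{(m)}$ produces the desired integral $I_{r+1}^m$ with the correct prefactor $(-1)^{r+1}(-i)^{m-(r+1)}/(2\pi x^{r+1})$ (one tracks $1/i=-i$, so $(-i)^{m-r}/i=-(-i)^{m-r-1}$, which flips the sign), plus a cross term proportional to $(m/x)\cdot I_r^{m-1}(x)$.

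The remaining task is to absorb the cross term into the sum of lower-order derivatives. For this, I would invoke the inductive hypothesis at level $r$ applied to $p^{(m-1)}$, solve it for $I_r^{m-1}(x)$, and substitute. A short computation shows that, after all the signs and powers of $-i$ cancel, the cross term equals
\[
-\frac{m}{x}\,p^{(m-1)}(x)+\sum_{k=2}^{(r+1)\wedge m}c_{r,k-1}^{\,m-1}\prod_{i=0}^{k-1}(m-i)\,\frac{1}{x^{k}}\,p^{(m-k)}(x),
\]
where the reindexing $k=j+1$ converts $m\prod_{i=0}^{j-1}(m-1-i)$ into $\prod_{i=0}^{k-1}(m-i)$. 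The standalone term $-(m/x)p^{(m-1)}$ is exactly the $k=1$ contribution with coefficient $c_{r,0}^{m-1}=-1$, so the cross term becomes a single sum from $k=1$ to $(r+1)\wedge m$ with coefficients $c_{r,k-1}^{m-1}$.

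Adding this to the sum already present in the inductive hypothesis for $p^{(m)}$ (and extending it to the same range, using $c_{r,j}^m=0$ for $j>r$), the $j$-th coefficient becomes $c_{r,j}^m+c_{r,j-1}^{m-1}$, which is precisely $c_{r+1,j}^m$ by the recursion (\ref{MaimRcrsvForm}). This closes the induction. The main obstacle is purely bookkeeping: matching the shifted product $m\prod_{i=0}^{j-1}(m-1-i)=\prod_{i=0}^{j}(m-i)$ together with the sign and $(-i)$-power arithmetic, and observing that $\min(r,m-1)+1=\min(r+1,m)$ ensures the two sums combine into a single sum with the correct upper limit.
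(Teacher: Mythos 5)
Your proof is correct and mirrors the paper's argument: both hinge on the same integration-by-parts identity relating $I_r^m$, $I_r^{m-1}$, and $I_{r+1}^m$, and both absorb the cross term by invoking the formula at the pair $(r,m-1)$ to eliminate the extra integral, with the coefficient recursion then falling out of the reindexing $m\prod_{i=0}^{j-1}(m-1-i)=\prod_{i=0}^{j}(m-i)$. The only difference is organizational: you run a single induction on $r$ (valid for all $m$ simultaneously), while the paper nests an induction on $r$ inside an outer induction on $m$; your scaffolding is a bit cleaner, but the computational content is the same.
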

\begin{proof}
	We prove the formula by induction in $m$. Consider the case $m=0$. We want to prove that 
	\begin{align*}
	{p}(x)&=(-1)^{r}
	 \frac{(-i)^{-r}}{2\pi x^{r}} 
	\int e^{-iux} \frac{d^{r}}{d u^{r}} {\psi(u)}du,
	\end{align*}
	for any $r\geq{}0$. This can be done by induction on $r$ and integration by parts.
	Suppose that the formula is valid for $m=k$ and all $r\geq{}0$. We want to show the formula for 
	$m=k+1$ and all $r\geq0$. 
	Now, we use induction on $r$. The case $r=0$ is just (\ref{InvFormb}) with $m=k+1$. 
	Suppose the result holds for $r=\ell $ and $m=k+1$:
\begin{align}\label{RecrvForAux}
	{p}^{(k+1)}(x)&=\sum_{j=1}^{\ell\wedge (k+1)} c_{\ell,j}^{k+1}\, \prod_{i=0}^{j-1}(k+1-i) \frac{1}{x^{j}} 
	\,{p}^{(k+1-j)}(x)\\
		&\quad+ (-1)^{\ell}
		 \frac{(-i)^{k+1-\ell}}{2\pi x^{\ell}} 
		\int e^{-iux} u^{k+1}\frac{d^{\ell}}{d u^{\ell}} {\psi(u)}du.\nonumber
\end{align}
Next, with an integration by parts in the last term,
\begin{align}\label{RecrvForAux1}
	{p}^{(k+1)}(x)&=\sum_{j=1}^{\ell\wedge (k+1)} c_{\ell,j}^{k+1}\, \prod_{i=0}^{j-1}(k+1-i) \frac{1}{x^{j}} 
	\,{p}^{(k+1-j)}(x)\\
	&\quad+ (-1)^{\ell+1}
	 \frac{(-i)^{k+1-\ell-1}}{2\pi x^{\ell+1}} 
	\int e^{-iux} u^{k+1}\frac{d^{\ell+1}}{d u^{\ell+1}} {\psi(u)}du\nonumber\\
	&\quad +(-1)^{\ell+1}
	 \frac{(-i)^{k+1-\ell-1}}{2\pi x^{\ell+1}}\cdot (k+1)
	\int e^{-iux} u^{k}\frac{d^{\ell}}{d u^{\ell}} {\psi(u)}du.\nonumber
\end{align}
Then, writing (\ref{MaimRcrsvForm}) for $m=k$ and $r=\ell$ and solving for the last term gives
\begin{align*}
	& (-1)^{\ell+1}
		 \frac{(-i)^{k-\ell}}{2\pi x^{\ell}} 
		\int e^{-iux} u^{k}\frac{d^{\ell}}{d u^{\ell}} {\psi(u)}du=\\
	&-{p}^{(k)}(x)+\sum_{j=1}^{\ell\wedge k} c_{\ell,j}^{k}\, \prod_{i=0}^{j-1}(k-i) \frac{1}{x^{j}} 
	\,{p}^{(k-j)}(x)
\end{align*}
Plugging in (\ref{RecrvForAux1}), we get (\ref{MaimRcrsvForm})  with $r=\ell+1$ and $m=k+1$ 
provided that we define the  coefficients $c^{k+1}_{\ell+1,j}$ as follows:
\begin{align*}
	c_{\ell+1,1}^{k+1}:=c_{\ell,1}^{k+1}-1,\quad c_{\ell+1,j}^{k+1}:=c_{\ell,j}^{k+1}+c_{\ell,j-1}^{k}.
\end{align*}
This proves the case of $r=\ell+1$ and so, the result holds for all $r$ and all $m$.
\end{proof}

The following corollary give further information when working with the transition distributions of a L\'evy process.
\begin{cllry}\label{RprstnDnstyLvy}
Let $(X_{t})_{t\geq{}0}$ be a L\'evy process such that $\mu$, the distribution of $X_{1}$, satisfies (\ref{CndChrFnct}). 
Let $\gamma$ be such that
\(
	\psi_{t}(x):=e^{t \gamma(u)},
\)
where $\psi_{t}$ is  the characteristic function of $X_{t}$. Then, the density $p_{t}$ of $X_{t}$ admits the representation:
\begin{align}\label{RcrsFrml}
	{p}_{t}^{(m)}(x)&=\sum_{j=1}^{r \wedge m} c_{r,j}^{m}\, \prod_{i=0}^{j-1}(m-i) \frac{1}{x^{j}} 
	\,{p}_{t}^{(m-j)}(x)+ (-1)^{r}
	 \frac{(-i)^{m-r}}{2\pi x^{r}} \,\calI^{m}_{r}(t,x),
\end{align}
where 
\[
	\calI^{m}_{r}(t,x):=\sum_{(i_{1},i_{2},j_{1},j_{2})} d_{i_{1},i_{2}}^{j_{1},j_{2}}\cdot
	t^{j_{1}+j_{2}}\int e^{-iux}
	\left(\gamma^{(i_{1})}(u)\right)^{j_{1}}\left(\gamma^{(i_{2})}(u)\right)^{j_{2}}
	e^{t\gamma(u)} 	du,
\]
for some constants $ d_{i_{1},i_{2}}^{j_{1},j_{2}}$. The above summation is over all non-negative integers $i_{1},i_{2},j_{1},j_{2}$ such that 
$0<i_{2}\leq i_{1}$ and
\(
	i_{1}j_{1}+i_{2}j_{2}=r.
\)
\end{cllry}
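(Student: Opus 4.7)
The plan is to apply Theorem \ref{RcrsFormThrm} to the law $\mu_t$ of $X_t$ and then expand the final integral on the right-hand side of (\ref{MaimRcrsvForm}) using Fa\`a di Bruno's formula. The first (finite) sum in the claimed identity (\ref{RcrsFrml}) is then inherited verbatim from Theorem \ref{RcrsFormThrm}, so only the integral term needs work.

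First, I would verify that $\mu_t$ satisfies the integrability hypothesis (\ref{CndChrFnct}) for every $t>0$, so that Theorem \ref{RcrsFormThrm} is applicable. Since $\psi_t = e^{t\gamma}$ with $\gamma\in C^{\infty}$, successive differentiation shows that $d^r\psi_t/du^r$ is $\psi_t(u)$ times a polynomial in $t$ and in $\gamma(u),\gamma'(u),\dots,\gamma^{(r)}(u)$; combined with the pointwise identity $|\psi_t|=|\psi_1|^t$ and the hypothesis at $t=1$, this transfers (\ref{CndChrFnct}) to all $t>0$. Theorem \ref{RcrsFormThrm} then reduces the proof to rewriting
\[
	\int e^{-iux}\,u^m\,\frac{d^r\psi_t}{du^r}(u)\,du
\]
in the form of $\calI^m_r(t,x)$.

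The main step is Fa\`a di Bruno's formula applied to $\psi_t = e^{t\gamma}$, which expresses $d^r\psi_t/du^r$ as $e^{t\gamma(u)}$ times a sum, over all set partitions $\pi$ of $\{1,\dots,r\}$, of products $C_\pi\prod_{B\in\pi} t\,\gamma^{(|B|)}(u)$. Grouping summands by the multiset of block sizes $i_k$ with multiplicities $j_k$ (so that $\sum_k i_k j_k = r$ and the total power of $t$ is $\sum_k j_k$), and interchanging the finite sum with the integral (legitimate by (\ref{CndChrFnct})), produces a sum of terms of exactly the shape of $\calI^m_r(t,x)$. The main obstacle, in my view, is purely combinatorial bookkeeping: matching Fa\`a di Bruno's multi-index output to the four-index format $(i_1,i_2,j_1,j_2)$ displayed in the statement. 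For small $r$ at most two distinct derivative orders of $\gamma$ arise per partition and the correspondence is transparent; for $r\geq 6$, partitions with three or more distinct block sizes occur (e.g.\ sizes $1+2+3$ for $r=6$, producing a $\gamma'\gamma''\gamma'''$ term), and the four-index notation must then be read as shorthand for a longer tuple of pairs, with the constants $d^{j_1,j_2}_{i_1,i_2}$ absorbing the corresponding Fa\`a di Bruno coefficients. No further analytic input beyond Theorem \ref{RcrsFormThrm} is required.
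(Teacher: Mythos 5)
Your approach is exactly the one the paper intends: the paper offers no separate argument for Corollary~\ref{RprstnDnstyLvy}, presenting it as an immediate consequence of Theorem~\ref{RcrsFormThrm} applied at each fixed $t$, together with the expansion of $\frac{d^{r}}{du^{r}}e^{t\gamma(u)}$ via Fa\`a di Bruno/Bell polynomials. Your proposal reproduces that route and, moreover, correctly spots a genuine imprecision in the statement: the Bell polynomial $B_{r,k}$ generically produces monomials $\prod_{\ell}\bigl(\gamma^{(i_\ell)}\bigr)^{j_\ell}$ with $\sum_\ell i_\ell j_\ell=r$ and $\sum_\ell j_\ell=k$ involving arbitrarily many \emph{distinct} derivative orders once $r\geq 6$ (e.g.\ the block pattern $1+2+3$), so the four--index format $(i_1,i_2,j_1,j_2)$ must indeed be read as shorthand for such tuples. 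This matters because the later Proposition in Section~\ref{SectGnrlSymmtLevy} takes $r=m+1$, which exceeds $5$ as soon as $m\geq 5$; fortunately the ensuing estimate only uses the totals $\sum_\ell i_\ell j_\ell=r$ and $\sum_\ell j_\ell$, so nothing breaks, but the statement should really be phrased with a general multi-index.

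One step in your write-up is weaker than you present it: the claim that (\ref{CndChrFnct}) at $t=1$ ``transfers'' to all $t>0$ via $|\psi_t|=|\psi_1|^{t}$ is not automatic. For $t\geq 1$ it is easy since $|\psi_1|^t\leq |\psi_1|\leq 1$, but for $0<t<1$ one has $|\psi_1|^t\geq|\psi_1|$ and the inequality goes the wrong way; a characteristic function decaying only polynomially (say $|\psi_1(u)|\asymp |u|^{-m-2}$) satisfies $\int|u|^m|\psi_1|\,du<\infty$ yet $\int|u|^m|\psi_1|^t\,du=\infty$ for small $t$. In the paper this is harmless because Corollary~\ref{RprstnDnstyLvy} is applied only under (\ref{CndDrvCmmlnt}) and (\ref{BndChrFnctn}), which force exponential decay $|\psi_1(u)|\leq e^{-c_0|u|^{\alpha}}$ and hence $|\psi_t(u)|\leq e^{-c_0 t|u|^{\alpha}}$, giving (\ref{CndChrFnct}) for every fixed $t>0$; but as a free-standing statement the corollary implicitly needs this kind of lower bound on the decay of $|\psi_1|$, and you should state that rather than assert the transfer is automatic.
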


As an application let us consider a L\'evy process as in Corollary \ref{RprstnDnstyLvy} such that
for each $i\geq 1$, there exists $c_{i}<\infty$ and $u_{0,i}>0$ such that 
\begin{equation}\label{CndDrvCmmlnt}
	|\gamma^{(i)}(u)|\leq c_{i} |u|^{\alpha-i},
\end{equation} 
for all $|u|>u_{0,i}$. Also, assume that there exists $u_{0}>0$ and $c_{0}<\infty$ such that 
\begin{equation}\label{BndChrFnctn}
	|\psi_{1}(u)|\leq e^{-c_{0} |u|^{\alpha}},
\end{equation}
for all $u>u_{0}$. Remember that (\ref{CndtnDnsty}) implies the above condition (cf. Sato \cite[Proposition 28.3]{Sato}). Then, we have the following result:
\begin{prop}
Let (\ref{CndDrvCmmlnt}) and (\ref{BndChrFnctn}) be true for $0<\alpha\leq{}2$. Then,
for any $m\geq{}0$, any $\varepsilon>0$, and any $t_{0}>0$,
    \[
        \sup_{0<t\leq{}t_{0}}
        \sup_{|x|>\varepsilon}|p_{t}^{(m)}(x)|<\infty.
    \]
\end{prop}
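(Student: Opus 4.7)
The plan is a strong induction on $m$, using the recursive formula of Corollary \ref{RprstnDnstyLvy} at each step with the calibrated choice $r=m+1$.

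For the base case $m=0$, Corollary \ref{RprstnDnstyLvy} with $r=1$ has no lower-order correction term (since $r\wedge m = 0$) and yields $p_t(x) = -i(2\pi x)^{-1}\calI_{1}^{0}(t,x)$. The single constituent of $\calI_{1}^{0}$ is of the type $t\int e^{-iux}\gamma'(u)\,e^{t\gamma(u)}du$. By (\ref{CndDrvCmmlnt}) and (\ref{BndChrFnctn}), for $|u|>u_{0,1}$ its integrand is bounded by $c_{1}|u|^{\alpha-1}e^{-c_{0}t|u|^{\alpha}}$, while for $|u|\leq u_{0,1}$ the continuity of $\gamma'$ together with $|e^{t\gamma}|\leq 1$ gives a $t$-independent bound. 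The substitution $v=t^{1/\alpha}u$ in the tail produces $c_{1}t^{-1}\int|v|^{\alpha-1}e^{-c_{0}|v|^{\alpha}}dv$, a finite constant; the prefactor $t$ cancels the $t^{-1}$, and together with $|x|^{-1}\leq\varepsilon^{-1}$ this finishes the base case.

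For the inductive step, suppose the bound is established at all derivative orders strictly less than $m$. Applying Corollary \ref{RprstnDnstyLvy} with $r=m+1$, the $m$ correction terms $x^{-j}p_{t}^{(m-j)}(x)$, $1\leq j\leq m$, are uniformly bounded for $|x|>\varepsilon$ and $t\in(0,t_{0}]$ by the inductive hypothesis together with $|x|^{-j}\leq\varepsilon^{-j}$. For the remainder $x^{-(m+1)}\calI_{m+1}^{m}(t,x)$, a typical term coming from the Fa\`a di Bruno expansion of $d^{m+1}\psi_{t}/du^{m+1}$ in the integral $\int e^{-iux}u^{m}(d^{m+1}\psi_{t}/du^{m+1})du$ of Theorem \ref{RcrsFormThrm} has the form
\[
    t^{j_{1}+j_{2}}\int e^{-iux}\, u^{m}\,(\gamma^{(i_{1})}(u))^{j_{1}}(\gamma^{(i_{2})}(u))^{j_{2}}\,e^{t\gamma(u)}\,du,
\]
with the algebraic constraint $i_{1}j_{1}+i_{2}j_{2}=m+1$ (which forces $j_{1}+j_{2}\geq 1$). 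For $|u|\geq R:=u_{0,i_{1}}\vee u_{0,i_{2}}\vee 1$, using (\ref{CndDrvCmmlnt}) and the consequence $m+\alpha(j_{1}+j_{2})-(i_{1}j_{1}+i_{2}j_{2})=\alpha(j_{1}+j_{2})-1$, the modulus of the integrand is majorized by $C|u|^{\alpha(j_{1}+j_{2})-1}e^{-c_{0}t|u|^{\alpha}}$. The rescaling $v=t^{1/\alpha}u$ converts the tail into $Ct^{-(j_{1}+j_{2})}\int_{|v|\geq t^{1/\alpha}R}|v|^{\alpha(j_{1}+j_{2})-1}e^{-c_{0}|v|^{\alpha}}dv$. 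Since $\alpha(j_{1}+j_{2})-1>-1$ (because $j_{1}+j_{2}\geq 1$), the $v$-integrand is integrable at the origin, hence the $v$-integral stays bounded uniformly in $t\in(0,t_{0}]$; the prefactor $t^{j_{1}+j_{2}}$ cancels the $t^{-(j_{1}+j_{2})}$ exactly. The bounded range $|u|\leq R$ is handled by the smoothness of $\gamma$ and $|e^{t\gamma}|\leq 1$, multiplied by $t^{j_{1}+j_{2}}\leq t_{0}^{j_{1}+j_{2}}$.

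The main obstacle is the delicate balance of $t$-powers in the scaling argument: the exponential decay $e^{-c_{0}t|u|^{\alpha}}$ weakens as $t\to 0$, and the substitution $v=t^{1/\alpha}u$ produces negative powers of $t$ from both the measure $du=t^{-1/\alpha}dv$ and the monomial $u^{m}$. The specific choice $r=m+1$, combined with the identity $i_{1}j_{1}+i_{2}j_{2}=r$, is precisely what makes the tail exponent in the rescaled variable equal to $\alpha(j_{1}+j_{2})-1$, so that the $t^{j_{1}+j_{2}}$ prefactor absorbs the negative scaling-induced $t$-power exactly, while integrability at $v=0$ is ``just barely'' secured by $j_{1}+j_{2}\geq 1$. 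A smaller $r$ would leave a divergent $t$-power uncanceled, and a larger $r$ would demand additional accounting for multiple scales.
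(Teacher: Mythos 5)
Your proof follows essentially the same route as the paper's: induction on $m$, the recursive formula of Corollary \ref{RprstnDnstyLvy} with the choice $r=m+1$, the split into a bounded $u$-range (where $|e^{t\gamma}|\le 1$ controls things) and a tail where (\ref{CndDrvCmmlnt}) and (\ref{BndChrFnctn}) apply, and the rescaling $v=t^{1/\alpha}u$ so that the $t^{j_1+j_2}$ prefactor exactly cancels the $t^{-(j_1+j_2)}$ produced by the substitution. The one place you are more explicit than the paper is in verifying that the rescaled integrand stays integrable near $v=0$ (namely, that $\alpha(j_1+j_2)-1>-1$ because $j_1+j_2\ge 1$), which is necessary since the lower limit of the $v$-integral shrinks to zero as $t\to 0$; the paper writes $\int_0^\infty v^{(j_1+j_2)\alpha+m-r}e^{-c_0 v^\alpha}\,dv$ and simply asserts it is finite for $r=m+1$, leaving that check implicit. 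This is a worthwhile bit of bookkeeping to have spelled out, but it does not constitute a different approach.
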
  
\begin{proof}
The proof is by induction on $m\geq 0$. The recursive formula (\ref{RcrsFrml}) with $r=1$ and $m=0$ leads to
\(
	|p_{t}(x)|\leq{}  t \left|\int e^{-iux}
	\gamma'(u)
	e^{t\gamma(u)} 	du\right|/x.
\)
Note that we can assume that there exist constants $u_{0}>0$, $b_{0}$,  and $b_{1}$ such that 
\begin{align*}
	\sup_{|w|\leq{}u_{0}}|\gamma'(w)|
	|e^{t\gamma(w)}|\leq b_{0}, \quad 
	|\gamma'(u)\cdot e^{t\gamma(u)}|\leq b_{1} |u|^{\alpha-1}e^{-c_{0}t|u|^{\alpha}},
\end{align*}
for all $|u|>u_{0}$ and $0<t\leq t_{0}$. Then, for all $t\leq{}t_{0}$,
\begin{align*}
	|p_{t}(x)|&\leq b_{0} u_{0} \frac{t}{x} +
	b_{1}\int_{0}^{\infty} v^{\alpha-r}e^{-c_{0}v^{\alpha}}dv.
\end{align*}
Next, let the statement of the proposition hold true for $m=0,\dots,k$, and let us show it for $m=k+1$. 
In view of (\ref{RcrsFrml}), it suffices to show that 
  \[
        \sup_{0<t\leq{}t_{0}}
        \sup_{|x|>\varepsilon}|\calI_{r}^{m}(t,x)|<\infty,
    \]
for some $r\geq{}0$. 
Moreover, it suffices to show that
\[
	\sup_{0<t\leq{}t_{0}}
        \sup_{|x|>\varepsilon}
	t^{j_{1}+j_{2}}\left|\int e^{-iux} u^{m}
	(\gamma^{(i_{1})}(u))^{j_{1}}(\gamma^{(i_{2})}(u))^{j_{2}}
	e^{t\gamma(u)} 	du\right|<\infty,
\]
for any $i_{1}\geq i_{2}>0$ and $j_{1},j_{2}\geq 0$  such that 
$i_{1}j_{1}+i_{2}j_{2}=r$. As before,  we can assume that there exist constants $u_{0}>0$, $b_{0}$,  and $b_{1}$ such that 
\begin{align*}
	\sup_{|u|\leq{}u_{0}}|\gamma^{(i_{1})}(u)|^{j_{1}}|\gamma^{(i_{2})}(u)|^{j_{2}}
	|e^{t\gamma(u)}|&\leq b_{0}\\
	|\gamma^{(i_{1})}(u)|^{j_{1}}|\gamma^{(i_{2})}(u)|^{j_{2}}
	|e^{t\gamma(u)}|&\leq b_{1} |u|^{(j_{1}+j_{2})\alpha-r}e^{-c_{0}t|u|^{\alpha}},
\end{align*}
for all $|u|>u_{0}$. We need to show that there exists an $r$ such that the supremum on $0<t<t_{0}$ of 
\begin{align*}
	t^{j_{1}+j_{2}}\int_{0}^{\infty} u^{(j_{1}+j_{2})\alpha+m-r}e^{-c_{0}t u^{\alpha}}du=
	t^{\frac{1}{\alpha}\left(r-m-1\right)}\int_{0}^{\infty} v^{(j_{1}+j_{2})\alpha+m-r}e^{-c_{0}v^{\alpha}}dv.
\end{align*}
is finite.  The supremum above will be finite if $r=m+1$.
\end{proof}

\begin{exmpl}
	Consider the CGMY L\'evy model introduced in \cite{Madan} and of  {great popularity in the area of mathematical finance}. This process is a tempered stable one in the sense of Rosi\'nski \cite{Rosinski:2007}. Its characteristic function is given by 
	\[
		\psi_{t}(u)=\exp\left\{t C\Gamma(-\alpha)\left((M-iu)^{\alpha}-M^{\alpha} +
		(G+iu)^{\alpha}-G^{\alpha}\right)\right\}
	\]
	(see Theorem 1 in \cite{Madan}). Then, 
	\[
		\gamma(u):=C\Gamma(-\alpha)\left((M-iu)^{\alpha}-M^{\alpha} +
		(G+iu)^{\alpha}-G^{\alpha}\right).
	\]
	We can then verify that $\gamma$ satisfies  (\ref{CndDrvCmmlnt}) and (\ref{BndChrFnctn}).
\end{exmpl}}

{The next result generalizes the conclusions in the above example to more general tempered stable processes. 
For simplicity, we take symmetric processes, even though the proof can be extended to the general case.
\begin{prop}\label{PropGnrlTSP}
	Let $X$ be a L\'evy process with L\'evy triplet $(0,0,\nu)$. Assume that $\nu$ is of the form
	\( 
		\nu(ds)=|s|^{-\alpha-1}q(|s|) ds,
	\) 
	where $0<\alpha<{}2$ and $q$ is a completely monotone function on $\bbr_{+}$ such that 
	\begin{equation}\label{MntCnd}
		\int_{1}^{\infty} s^{j-\alpha-1}q(s)ds<\infty,
	\end{equation}
	for all $j\geq{}1$.
	Assume also that the measure $F$ for which 
	\(
		q(s)=\int_{0}^{\infty} e^{-\lambda s} F(d\lambda)
	\)
	is such that
	\begin{equation}\label{CndF}
		\int_{0}^{\infty} \lambda^{j} F(d\lambda)<\infty,
	\end{equation}
	for all $j\geq{}0$. Then, the function $\gamma$ associated with the characteristic function of $X$ via $\psi_{t}(x):=e^{t \gamma(u)}$ satisfies the conditions (\ref{CndDrvCmmlnt}) and (\ref{BndChrFnctn}). 
\end{prop}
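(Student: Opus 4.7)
The plan is to derive integral representations for $\gamma$ and each of its derivatives from the symmetric L\'evy--Khintchine formula and the Bernstein representation of $q$, and then bound them using the Laplace transform identity
\[
	\int_0^\infty e^{-(\lambda-iu)s}\,s^{i-\alpha-1}\,ds=\Gamma(i-\alpha)\,(\lambda-iu)^{\alpha-i}\qquad(i>\alpha,\ \lambda\geq 0),
\]
combined with the elementary estimate $|(\lambda-iu)^{\alpha-i}|=(\lambda^2+u^2)^{(\alpha-i)/2}\leq|u|^{\alpha-i}$, valid whenever $\alpha-i<0$. By symmetry of $\nu$ the compensator in the L\'evy--Khintchine formula vanishes, so
\[
	\gamma(u)=\int(\cos(ux)-1)\,\nu(dx)=2\int_0^\infty(\cos(us)-1)\,s^{-\alpha-1}q(s)\,ds,
\]
and plugging in the Bernstein representation of $q$ together with Fubini (justified since $q\leq q(0)=F(\bbr_+)<\infty$ and by (\ref{MntCnd}) with $j=2$) gives
\[
	\gamma(u)=2\int_0^\infty F(d\lambda)\int_0^\infty(\cos(us)-1)\,s^{-\alpha-1}e^{-\lambda s}\,ds.
\]

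Condition (\ref{BndChrFnctn}) I would deduce from (\ref{CndtnDnsty}) via \cite[Proposition 28.3]{Sato}. To verify (\ref{CndtnDnsty}), use the monotonicity of $q$ to bound
\[
	\int_{|x|\leq\varepsilon}x^2\,\nu(dx)=2\int_0^\varepsilon s^{1-\alpha}q(s)\,ds\geq\frac{2q(\varepsilon)}{2-\alpha}\,\varepsilon^{2-\alpha},
\]
and observe that $q(\varepsilon)\uparrow q(0)=F(\bbr_+)>0$ as $\varepsilon\downarrow 0$. For condition (\ref{CndDrvCmmlnt}) with $i\geq 2$, differentiation under the integral is legitimate since $|\partial_u^i(\cos(us)-1)|\leq s^i$ and $s^{i-\alpha-1}q(s)$ is integrable on $(0,\infty)$ by (\ref{MntCnd}) together with $q\leq q(0)$ near the origin (using $i-\alpha-1>-1$ for $i\geq 2$); this yields
\[
	\gamma^{(i)}(u)=\pm 2\int_0^\infty F(d\lambda)\int_0^\infty\kappa_i(us)\,s^{i-\alpha-1}e^{-\lambda s}\,ds,
\]
with $\kappa_i\in\{\cos,\sin\}$. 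The Laplace transform identity shows that the inner integral equals $\Gamma(i-\alpha)$ times the real or imaginary part of $(\lambda-iu)^{\alpha-i}$; the pointwise bound above and $F(\bbr_+)<\infty$ then give $|\gamma^{(i)}(u)|\leq 2\Gamma(i-\alpha)F(\bbr_+)|u|^{\alpha-i}$ for all $u\neq 0$.

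The main obstacle is the case $i=1$, since the would-be Laplace transform $\int_0^\infty s^{-\alpha}e^{-\lambda s}\,ds$ diverges at $s=0$ once $\alpha\geq 1$. Here my plan is to use the scaling substitution $t=|u|s$ in $\gamma'(u)=-2\int_0^\infty\sin(us)\,s^{-\alpha}q(s)\,ds$ to obtain
\[
	\gamma'(u)=-2\,|u|^{\alpha-1}\,\operatorname{sgn}(u)\int_0^\infty\sin(t)\,t^{-\alpha}\,q(t/|u|)\,dt,
\]
and to show that the remaining integral $J(u)$ is uniformly bounded for $|u|\geq 1$. Splitting $\int_0^\infty=\int_0^1+\int_1^\infty$, the first piece is bounded by $q(0)/(2-\alpha)$ using $|\sin t|\leq t$ and $q\leq q(0)$. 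The second piece is controlled by integration by parts in $t$: a single IBP suffices for $\alpha\in(0,1)\cup(1,2)$, with boundary and residual terms controlled by $q(0)$ and by $\|q'\|_\infty\leq\int\lambda\,F(d\lambda)<\infty$ from (\ref{CndF}) with $j=1$; the borderline case $\alpha=1$ requires a second IBP so that the would-be logarithmic divergence appears with prefactor $|u|^{-1}\log|u|=o(1)$, using in addition $\|q''\|_\infty\leq\int\lambda^2\,F(d\lambda)<\infty$ from (\ref{CndF}) with $j=2$.
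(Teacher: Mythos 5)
Your proof of condition (\ref{BndChrFnctn}) matches the paper's (verify (\ref{CndtnDnsty}) via the monotonicity of $q$ near zero, then invoke \cite[Proposition 28.3]{Sato}), so I focus on (\ref{CndDrvCmmlnt}).

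For $i\geq 2$ your argument is genuinely different from the paper's and, I think, cleaner. Where the paper bounds $\int_0^\infty \kappa(us)\,s^{j-\alpha}e^{-\lambda s}\,ds$ by iterated integration by parts combined with Lemma~\ref{lmEsyEst}, producing the factor $(1+\lambda)^{j}$ and therefore invoking the moment condition (\ref{CndF}), you compute the inner integral exactly as $\Gamma(i-\alpha)(\lambda-iu)^{\alpha-i}$ (valid since $i-\alpha>0$ and $\lambda>0$; note that (\ref{MntCnd}) forces $F(\{0\})=0$) and use the pointwise bound $|(\lambda-iu)^{\alpha-i}|\leq|u|^{\alpha-i}$, uniform in $\lambda$. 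This yields $|\gamma^{(i)}(u)|\leq 2\Gamma(i-\alpha)F(\bbr_+)|u|^{\alpha-i}$ and requires no moments of $F$ beyond $F(\bbr_+)=q(0^+)<\infty$, so (\ref{CndF}) becomes almost superfluous in your route. The Fubini/differentiation steps are justified exactly as you say by (\ref{MntCnd}) and $q\leq q(0^+)$.

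The one place your write-up has a genuine gap is the case $i=1$. After the scaling substitution and a single integration by parts on $\int_1^\infty \sin(t)\,g(t)\,dt$ with $g(t)=t^{-\alpha}q(t/|u|)$, the residual contains the term $|u|^{-1}\int_1^\infty \cos(t)\,t^{-\alpha}q'(t/|u|)\,dt$. Bounding this ``by $\|q'\|_\infty$'' in the naive way needs $\int_1^\infty t^{-\alpha}\,dt<\infty$, i.e.\ $\alpha>1$; for $\alpha\in(0,1]$ the claim that a single IBP controlled by $q(0)$ and $\|q'\|_\infty$ suffices does not hold as stated, and the extra IBP you propose at $\alpha=1$ does not obviously repair it either. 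The clean fix is to exploit monotonicity rather than sup-norms of derivatives: $g$ is positive and decreasing, so $g'\leq 0$ and $\int_1^\infty|g'(t)|\,dt=g(1)\leq q(0^+)$, giving
\[
\left|\int_1^\infty \sin(t)\,g(t)\,dt\right|\leq |\cos(1)|\,g(1)+\int_1^\infty|g'(t)|\,dt\leq 2\,q(0^+)
\]
for every $\alpha\in(0,2)$; equivalently, the Dirichlet test gives $\left|\int_1^\infty\sin(t)\,g(t)\,dt\right|\leq 2g(1)$ directly, with no IBP and no need for $\|q'\|_\infty$ or $\|q''\|_\infty$ at all. With that one correction the scaling argument handles all $\alpha\in(0,2)$ uniformly and the special treatment of $\alpha=1$ disappears. (The paper handles this borderline case via Lemma~\ref{lmEsyEst}(i) for $\alpha\leq 1$ and a $\sin v\leq v$ estimate for $\alpha>1$, so your route is again genuinely different here.)
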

\begin{proof}
	Clearly, 
    \begin{equation}
        \liminf_{\varepsilon\rightarrow{}0}
        \frac{\int_{0}^{\varepsilon} s^{1-\alpha} q(s) ds}
        {\varepsilon^{2-\alpha}}
        >0,
    \end{equation}
    and thus, condition (\ref{BndChrFnctn}) will follow.
    Now, we claim that there exists a constant $C$ such that
    \begin{equation}\label{AuxEst10}
    	\left|\int_{0}^{\infty} \sin(us) s^{-\alpha} e^{-\lambda s}d s \right|\leq{}  C u^{\alpha-1},
    \end{equation} 
    for all $\lambda, u>0$ and $0<\alpha<2$. Indeed, if $0<\alpha\leq 1$, (\ref{AuxEst10}) results from (\ref{EqEsyEst1&2}). If $1\leq{}\alpha<2$, then changing variables and using $\sin v\leq v$, 
    \begin{align*}
    	\left|\int_{0}^{\infty} \sin(us) s^{-\alpha} e^{-\lambda s}d s \right|&\leq{}  
	u^{\alpha-1}\left|\int_{0}^{\infty} \sin(v) v^{-\alpha} e^{-\lambda v/u}d v \right|\\
	&\leq 
	u^{\alpha-1}\int_{0}^{\pi}  v^{1-\alpha} dv +
	u^{\alpha-1}\int_{\pi}^{\infty}  v^{-\alpha} d v\leq C u^{\alpha-1},
    \end{align*}
    for a constant $C$ independent of $u$ and $\lambda$. Moreover, it can be proved that 
    there exists a constant $C_{j}$ such that 
    \begin{equation}\label{AuxEst1b}
    	\left|\int_{0}^{\infty} \kappa(us) s^{j-\alpha} e^{-\lambda s}d s \right|\leq{}  C_{j} (1+\lambda)^{j}u^{\alpha-1-j},
    \end{equation} 
    for $j\geq{}1$, $\lambda, u>0$, and $0<\alpha<2$, and where $\kappa$ can be either $\cos$ or $\sin$.
    Indeed, the case $j=1$ can be proved as follows. If $0<\alpha\leq{}1$, then we apply two times integration by parts (similar to the case $\alpha=1$ in the proof of Theorem 4.4). Then, we can apply part (i) of Lemma \ref{lmEsyEst}. If $1<\alpha\leq{}2$, then one can apply directly part (i) of Lemma \ref{lmEsyEst}. 
    The case $j\geq{}1$ can be proved using induction on $j$ with the help of two integration by parts. 
    From the previous estimates, we have that, for $j\geq{}0$,
        \begin{equation}\label{AuxEst1}
    	\left|\int_{0}^{\infty} \kappa_{j}(us) s^{j-\alpha} e^{-\lambda s}d s \right|\leq{}  C_{j} (1+\lambda)^{j}u^{\alpha-1-j},
    \end{equation} 
    where $C_{j}$ is a constant independent of $\lambda$ and $u$ and  $\kappa_{j}(u)=\cos(u)$ if $j$ is odd, and 
  $\kappa_{j}(u)=\sin(u)$ if $j$ is even. 
   Next, from the conditions on $X$, the function $\gamma$ is given by
    \(
    	\gamma(u)=2\int_{0}^{\infty}(1-\cos u s) s^{-\alpha-1} q(s) ds.
    \)
    Condition (\ref{MntCnd}) implies that 
    \begin{align*}
    	\left|\gamma^{(j)}(u)\right|&=2\left|\int_{0}^{\infty}\kappa_{j-1}(us) s^{j-\alpha-1} q(s) ds\right|,
	\quad j\geq{}1, 
    \end{align*}
    where $\kappa_{j}$ is as above.
    In that case, using (\ref{CndF}), applying Fubini's Theorem, and (\ref{AuxEst1}), we have
    \begin{align*}	
    	\left|\gamma^{(j)}(u)\right|&\leq 2\int_{0}^{\infty}\left|\int_{0}^{\infty}\kappa_{j-1}(us) s^{j-\alpha-1} e^{-\lambda s} ds \right|F(d\lambda)\\
	&\leq 2C_{j}u^{\alpha-j}\ \int_{0}^{\infty}(1+\lambda)^{j-1}F(d\lambda)
	\leq c_{j}u^{\alpha-j},
    \end{align*}
    for a constant $c_{j}$ independent of $u$.
 \end{proof}

\begin{remk}
	 Rosi\'nski \cite{Rosinski:2007} (see Proposition 2.7)  gives conditions for (\ref{MntCnd}) to hold. 
	 In terms of the notation of Proposition \ref{PropGnrlTSP},  (\ref{MntCnd})  holds with $j>1$ if and only if 
	 \(
	 	\int_{0}^{1} \lambda^{-j} F(d\lambda)<\infty,
	\)
	which is also necessary and sufficient for $j=1$ provided that $\alpha<1$. 
	If $\alpha>1$, then  (\ref{MntCnd})  always hold for $j=1$, while when $\alpha=1$, 
	(\ref{MntCnd})  hold with $j=1$ if and only if 
	\(
	 	\int_{0}^{1} \lambda^{-1} \log(\lambda^{-1})F(d\lambda)<\infty.
	\)
\end{remk}	
}

\appendix
\section{\textbf{Verification of the claim in Remark \ref{Simplify} (iii).}}\label{TechnicalProofs}
{\DRed Note that the expression for $d_{2}$ in Remark  \ref{Simplify} (ii) can be modified so that one can replace $\bar{s}_{\varepsilon}(x)$ by $s(x){\bf 1}_{\{|x|\geq \varepsilon\}}$. We will get:}
\begin{align}
    d_{2}&=
    -\sigma^{2}s'(y)+2 b s(y)-2
        \int_{|w|\leq{}\varepsilon} \int_{0}^{1}s'(y-\beta w)
        (1-\beta)d\beta  w^{2} s(w)d w
    \label{Line2}\\
    &\quad+\int_{|x|\geq{}\varepsilon}\int_{|u|\geq{}\varepsilon}
    {\bf 1}_{\{x+u\geq{}y\}} s(x)s(u)dxdu
    \label{Line3}\\
    &\quad
    -2\int_{|x|\geq{}\varepsilon}s(x)dx \int_{y}^{\infty} s(x)dx
    -2\int_{\varepsilon\leq{}|x|\leq{}1}x s(x)dx s(y).
    \label{Line4}
\end{align}
The last term in (\ref{Line2}) converges to $0$ as $\varepsilon\rightarrow{}0$.
The term in (\ref{Line3}) can be written as follows (omitting 
the integrand $s(x)s(u)$ and using  symmetry of this about  $x=u$):
\begin{align*}
    &\underbrace{2\int_{-\infty}^{-\varepsilon} dx \int_{y-x}^{\infty}
    du}_{A_{1}}+\underbrace{2\int_{y}^{\infty} dx\int_{\varepsilon}^{\infty} du}_{A_{2}}
    -\underbrace{\int_{y}^{\infty} dx \int_{y}^{\infty} du}_{A_{3}}\\
    &+
    \underbrace{\int_{y/2}^{y} dx\int_{y/2}^{y} du}_{A_{4}}
    +\underbrace{2\int_{\varepsilon}^{y/2} dx\int_{y-x}^{y} du}_{A_{5}}.
\end{align*}
Similarly, we can decompose the terms in line (\ref{Line4}) as
\begin{align*}
    &\underbrace{-2\int_{-1}^{-\varepsilon}s(x)dx \int_{y}^{\infty} s(u) du}_{B_{1}}
    -\underbrace{2\int_{-\infty}^{-1}s(x)dx \int_{y}^{\infty} s(u)du}_{B_{2}}\\
    &\underbrace{-2\int_{y}^{\infty}s(x)dx \int_{\varepsilon}^{\infty} s(u) du}_{B_{3}}
    \underbrace{-2s(y)\int_{-1}^{-\varepsilon} x s(x) dx}_{B_{4}}
    \underbrace{-2s(y)\int_{\varepsilon}^{1} x s(x) dx}_{B_{5}}
\end{align*}
Now, $A_{2}+B_{3}=0$, $A_{1} +B_{1}+B_{2}+B_{4}$ becomes
\begin{align*}
    2\int_{-1}^{-\varepsilon}
    \int_{y-x}^{y} \left\{s(u)-s(y)\right\} du s(x) dx
    +2\int_{-\infty}^{-1}
    \int_{y-x}^{y} s(u) du s(x) dx,
\end{align*}
and $A_{5}+B_{5}$ becomes 
\begin{align*}
 2\int_{\varepsilon}^{y/2}\int_{y-x}^{y} \left\{s(u)-s(y)\right\}
    du s(x) dx-2
    s(y)\int_{y/2}^{1} x s(x) d x.
\end{align*}
Then, after changing variables, $d_{2}$ becomes:
\begin{align*}
    &
    -\sigma^{2}\,s'(y)+  2 s(y)b+
    2\int_{-\varepsilon}^{\varepsilon}
        \int_{0}^{x}\left\{s(y-u)-s(y)\right\}du s(x)dx
-\nu([y,\infty))^{2}\\
& +\int_{y/2}^{y} s(x)dx \int_{y/2}^{y} s(u) du
    +2\int_{-\infty}^{-1}
    \int_{y-x}^{y} s(u) du s(x) dx
    -2s(y)\int_{y/2}^{1} x s(x) d x\\
    &
    + 2   \int_{-1}^{-\varepsilon}
    \int_{0}^{x} \left\{s(y-u)-s(y)\right\} du s(x) dx+
    2\int_{\varepsilon}^{y/2}\int_{0}^{x} \left\{s(y-u)-s(y)\right\}
    du s(x) dx.
\end{align*}
Now, taking $\varepsilon\rightarrow{}0$ in the above and further manipulation, gives the expression  in Remark \ref{Simplify} (iii).


\end{document}